\numberwithin{equation}{section}
\renewcommand{\subsection}[1]{\hspace{-\parindent}\refstepcounter{subsection}{\bf (\arabic{section}\alph{subsection}) #1.}\addcontentsline{toc}{subsection}{\bf #1.}}
\newenvironment{nouppercase}{%
  \renewcommand{\uppercasenonmath}[1]{}}{}
\theoremstyle{plain}
\newtheorem{thm}{Theorem}[section]
\newtheorem{addendum}[thm]{Addendum}
\newtheorem{cor}[thm]{Corollary}
\newtheorem{corollary}[thm]{Corollary}
\newtheorem{prop}[thm]{Proposition}
\newtheorem{remark}[thm]{Remark}
\newtheorem{example}[thm]{Example}
\newtheorem{lemma}[thm]{Lemma}
\newtheorem*{claim*}{Claim} 
\newtheorem*{lemma*}{Lemma}
\newtheorem*{theorem*}{Theorem}
\newtheorem*{conjecture*}{Conjecture}
\newcommand{\bC}{{\mathbb C}}
\newcommand{\bK}{{\mathbb K}}
\newcommand{\bR}{{\mathbb R}}
\newcommand{\bZ}{{\mathbb Z}}
\newcommand{\scrA}{\EuScript A}
\newcommand{\scrC}{\EuScript C}
\newcommand{\scrF}{\EuScript F}
\newcommand{\scrG}{\EuScript G}
\newcommand{\scrH}{\EuScript H}
\newcommand{\scrI}{\EuScript I}
\newcommand{\scrJ}{\EuScript J}
\newcommand{\scrP}{\EuScript P}
\newcommand{\scrQ}{\EuScript Q}
\newcommand{\scrS}{\EuScript S}
\newcommand{\frakg}{\mathfrak{g}}
\newcommand{\half}{{\textstyle\frac{1}{2}}}
\newcommand{\iso}{\cong}
\newcommand{\htp}{\simeq}
\newcommand{\smooth}{C^\infty}
\newcommand{\aff}{\mathit{aff}}
\newcommand{\Id}{\mathbbm{1}}
\title[LEFSCHETZ FIBRATIONS]{\Large\larger\rm Fukaya $A_\infty$-structures associated to\\ Lefschetz fibrations. IV 1/2}
\author{Paul Seidel}
\begin{document}
\begin{nouppercase}
\maketitle
\end{nouppercase}

\begin{abstract}
We describe a construction of the Fukaya category of an exact symplectic Lefschetz fibration, together with its closed-open string map. 
\end{abstract}

\section{Introduction}

\subsection{Context}
The surrounding mathematical landscape can surveyed as follows (this is an idealized description of ``things as they should be'': it glosses over the fact that several approaches exist, differing in the details and limitations). Take a symplectic Lefschetz fibration
\begin{equation} \label{eq:lefschetz}
\pi: E^{2n} \longrightarrow B,
\end{equation}
where $B \iso \bR^2$. There is always a distinguished Hamiltonian automorphism $\sigma$ of $E$, which is obtained by rotating the base by $2\pi$ near infinity. The ``closed string'' structure of interest is a sequence of Floer cohomology groups $\mathit{HF}^*(E,r)$, which are roughly speaking the fixed point Floer cohomology groups of the iterates $\sigma^r$, $r \in \bZ$. Their definition (involving a suitable choice of perturbation, to get rid of the fixed points at infinity) is arranged so that $\mathit{HF}^*(E,0)$ is the ``vanishing cohomology'' of \eqref{eq:lefschetz} (the cohomology relative to a fibre at infinity); and all $\mathit{HF}^*(E,r)$ vanish if the fibration is trivial (has no singularities). The primary ``open string'' object is the Fukaya category $\scrA = \scrF(\pi)$. The two sides are related by open-closed and closed-open string maps
\begin{align} \label{eq:oc0}
& \mathit{OC}: \mathit{HH}_*(\scrA,\scrA) \longrightarrow \mathit{HF}^{*+n}(E,0), \\
\label{eq:co0}
& \mathit{CO}: \mathit{HF}^*(E,1) \longrightarrow \mathit{HH}^*(\scrA,\scrA).
\end{align}
Here, $\mathit{HH}_*(\scrA,\scrA)$ and $\mathit{HH}^*(\scrA,\scrA)$ are the Hochschild homology and cohomology of $\scrA$ (in spite of the notation, the grading of Hochschild homology is cohomological in nature). 

The autoequivalence of $\scrA$ induced by $\sigma$ can be characterized as the Serre functor on that category, up to a shift by $n$ in the grading. In other words, the graph bimodule of $\sigma$ (this has underlying complexes of the form $\mathit{hom}_{\scrA}(\cdot, \sigma \cdot)$, and is an invertible bimodule, with respect to tensor product) is quasi-isomorphic to the shifted dual diagonal bimodule $\scrS = \scrA^\vee[-n]$. One can define Hochschild (co)homology with coefficients in any $\scrA$-bimodule $\scrQ$, denoted by $\mathit{HH}_*(\scrA,\scrQ)$ and $\mathit{HH}^*(\scrA,\scrQ)$. The only case we will need is when $\scrQ = \scrS^r$ is a tensor power of $\scrS$ (or, for $r<0$, of its inverse). Then, one expects to have twisted open-closed and closed-open string maps
\begin{align}
\label{eq:oc-r}
& \mathit{OC}_r: \mathit{HH}_*(\scrA, \scrS^r) \longrightarrow \mathit{HF}^{*+n}(E,-r), \\
\label{eq:co-r}
& \mathit{CO}_r: \mathit{HF}^*(E,1\!-\!r) \longrightarrow \mathit{HH}^*(\scrA, \scrS^r),
\end{align}
which specialize to the previous ones for $r = 0$. There are duality isomorphisms (the first is geometric; the second is algebraic, and holds for all invertible bimodules $\scrQ$)
\begin{align}
& \mathit{HF}^*(E,r) \iso \mathit{HF}^{2n-*}(E,-r)^\vee, \\
\label{eq:algebra-pairing}
& \mathit{HH}^*(\scrA, \scrA^\vee \otimes_{\scrA} \scrQ) \iso 
\mathit{HH}_{-*}(\scrA, \scrQ^{-1})^\vee.
\end{align}
Through these isomorphisms, $\mathit{CO}_r$ should become the dual of $\mathit{OC}_{1-r}$. 
As a related remark, consider the composition of the maps in both directions, which in view of \eqref{eq:algebra-pairing} can be written as
\begin{equation} \label{eq:cooc}
\mathit{CO}_r \circ \mathit{OC}_{r-1}: \mathit{HH}_*(\scrA, \scrS^{r-1}) \longrightarrow \mathit{HH}^{*+n}(\scrA,\scrS^r) \iso \mathit{HH}_*(\scrA, \scrS^{1-r})^\vee.
\end{equation}
A map between the same groups also exists for purely algebraic reasons, as a twisted version of the standard pairing on Hochschild homology. One expects this to agree with \eqref{eq:cooc}.

\subsection{Past work}
Let's review some existing approaches towards defining open-closed and closed-open string maps for Lefschetz fibrations (this discussion will include unpublished results; any errors should be imputed to this author's ignorance). Only exact Lefschetz fibrations, where the smooth fibres are Liouville domains, will be allowed from now on, since most work has been done in that context.

In \cite{seidel00}, $\scrA$ was restricted to a single basis of Lefschetz thimbles (which makes its definition technically easier, and that of $\mathit{OC}$ trivial). In that context, \cite{seidel00b} gives a conjectural geometric formula for $\mathit{HH}^*(\scrA,\scrA)$, corresponding to $\mathit{HF}^*(E,1)$ in our notation. A version of $\mathit{CO}$ for that formalism was constructed by Perutz \cite{perutz10}.

A groundbreaking contribution was made by Abouzaid-Ganatra \cite{abouzaid-ganatra14}. They used a definition of $\scrA$ following Abouzaid-Seidel's earlier unpublished work. This means that morphisms are defined as direct limits over Hamiltonian isotopies which rotate by less than a fixed amount near infinity on $B$. In this framework, they construct $\mathit{OC}_{-1}$ as well as $\mathit{CO}_0$, and analyze the composition \eqref{eq:cooc} by a Cardy relation argument, modelled on that in \cite{abouzaid10}. They use that to derive a split-generation criterion for full subcategories, again along the lines of \cite{abouzaid10}. The pairings on twisted Hochschild homology groups, which we have mentioned above, also arose as part of this program.

Again restricting to a single basis of Lefschetz thimbles, a definition of $\mathit{OC}_r$ was given in \cite{seidel14b} for $r = 1,2$. By duality, this corresponds to $\mathit{CO}_r$ for $r = 0,-1$. The emphasis was on $\mathit{CO}_1$, and how that gives rise to additional structure on $\scrA$ for Lefschetz fibrations that can be compactified by adding a fibre at infinity.


Sylvan introduced another approach to $\scrA$ via ``stops'' \cite{sylvan16}, in which the relevant Floer cohomology groups are obtained by equipping their ``wrapped'' counterparts with a filtration by winding number, and then considering only the subspace where that winding number is $0$. In \cite[Section 4.4]{sylvan16}, a version of $\mathit{OC}$ in that framework is constructed.
%

Most recently, Ganatra, Pardon and Shende \cite{ganatra-pardon-shende17} have considered ``Liouville sectors'' (which, like ``stops'', include exact Lefschetz fibrations as a special case), and defined $\mathit{OC}$ in that context. Their approach to the associated Fukaya categories is again through direct limits. The fundamental contribution of that paper is covariant functoriality both in the closed and open string versions, leading to a local-to-global criterion for $\mathit{OC}$ to be an isomorphism.

\subsection{Contents of this paper}
Again restricting to exact symplectic manifolds (for technical simplicity), we will give a definition of $\scrA$ based on ideas from \cite{seidel17}. This avoids algebraic gadgets such as quotient categories or filtrations, and merely involves a careful choice of almost complex structures and inhomogeneous terms. The underlying geometric viewpoint is that what matters is {\em using a sufficiently large group of automorphisms of the base.} Ideally, one would allow automorphisms whose asymptotic behaviour is governed by any oriented diffeomorphism of the circle at infinity. That presents some technical problems, having to do with preventing pseudo-holomorphic curves from escaping to infinity. Even though those problems are presumably not unsurmountable, we opt for a compromise solution instead, which is sufficient for most purposes (however, see Remark \ref{th:orlov}): namely, to treat the base as a copy of the open complex half-plane (or disc), and to use only hyperbolic isometries of that space at infinity.

There are actually two versions of this framework in the paper: the first one fixes a privileged ``point at infinity'', while the second one is more symmetrical. Section \ref{sec:affine} describes the first version, which leads to a particularly simple definition of the Fukaya category, as explained in Section \ref{sec:fukaya}. The second version is introduced in Section \ref{sec:hyperbolic}, and Section \ref{sec:fukaya2} generalizes the definition of the Fukaya category accordingly. The more general framework naturally accommodates the construction of $\mathit{CO}$, also described in Section \ref{sec:fukaya2}. This expository structure entails a certain amount of repetition (with variations); the advantage is that readers can encounter the basic ideas first in their simplest form (and in particular, those interested only in the Fukaya category itself can focus on Sections \ref{sec:affine} and \ref{sec:fukaya}).

Concerning future developments, we expect our approach to be well-suited for establishing the relation between operations on $\mathit{HF}^*(E,1)$, as constructed in \cite{seidel17}, and their classical counterparts (the Gerstenhaber algebra structure) for $\mathit{HH}^*(\scrA,\scrA)$, via $\mathit{CO}$. The same should apply to the more general maps $\mathit{CO}_r$, which however are not treated in this paper. Eventually, the intended application is to compare the connections on $\mathit{HF}^*(E,r)$ constructed in \cite{seidel17} (for symplectic fibrations with closed fibres and vanishing first Chern class, which is a little different from the setup here) with their categorical counterparts.

{\em Acknowledgments.} This work was partially supported by the Simons Foundation, through a Simons Investigator award; by NSF grant DMS-1500954; and by Columbia University, Princeton University, and the Institute for Advanced Study, through visiting appointments. 

\section{Affine transformations\label{sec:affine}}

In every two-dimensional TQFT type construction, the algebraic structure of the theory depends on what surfaces are allowed, and what additional data they carry. Specifically, to get an $A_\infty$-category structure, one needs to consider discs with marked boundary points; and any structures on those discs, other than the labeling of boundary intervals with objects, need to be topologically inessential (belong to weakly contractible spaces). In this section, we introduce one geometric setup where these conditions holds. Eventually, this will underlie the definition of the Fukaya category of a Lefschetz fibration. Towards that goal we undertake some warmup exercises, involving maps from Riemann surfaces to the upper half plane.

\subsection{Data associated to the ends}
Let 
\begin{equation}
G_{\aff} \iso \bR \rtimes \bR^{>0}
\end{equation}
be the group of orientation-preserving affine transformations of the real line, and $\frakg_{\aff}$ its Lie algebra. If $S$ is a manifold, a one-form $A \in \Omega^1(S,\frakg_\aff)$ can be considered as a connection on the trivial $G_\aff$-bundle over $S$: more precisely, our convention is that the associated connection is $d-A$. The curvature is then given by
\begin{equation} \label{eq:curvature}
F_A = -dA + \half [A,A] \in \Omega^2(S,\frakg_\aff).
\end{equation}
Gauge transformations $\Phi \in \smooth(S,G_\aff)$ act on connections by 
\begin{equation} \label{eq:gauge}
A \longmapsto \Phi_*A = \Phi A \Phi^{-1} + (d\Phi) \Phi^{-1}.
\end{equation}
%
Let's specialize to the case where our manifold is the interval $[0,1]$, and write $A = a_t \mathit{dt}$. By integrating our connection, one obtains a path $\Phi_t \in G$; concretely,
\begin{equation}
\Phi_0 = \Id, \qquad
d\Phi_t/dt = a_t \Phi_t.
\end{equation}
We refer to $g = \Phi_1 \in G_\aff$ as the parallel transport map of $A$ along $[0,1]$. For a connection on a general manifold $S$, a parallel transport is associated to any path $[0,1] \rightarrow S$.

The case of the interval is directly related to the one-dimensional part of our TQFT. Let's consider triples $(A,\lambda_0,\lambda_1)$ consisting of
\begin{equation} \label{eq:triples}
A \in \Omega^1([0,1], \frakg_{\aff}), \quad \lambda_0, \lambda_1 \in \bR,
\end{equation}
subject to one condition. Namely, let $g$ be the parallel transport map of $A$. Then:
\begin{equation} \label{eq:to-the-right}
\parbox[t]{36em}{
the preimage $\lambda_1^\dag = g^{-1}(\lambda_1)$ should lie to the left of $\lambda_0$: $\lambda_0 > \lambda_1^\dag$.}
\end{equation}
Let $\scrP_{\aff}([0,1])$ be the space of all such triples. It carries an action of the gauge group $\scrG_{\aff}([0,1]) = \smooth([0,1],G_{\aff})$, given by $(A,\lambda_0,\lambda_1) \mapsto (\Phi_*A,\Phi_0(\lambda_0), \Phi_1(\lambda_1))$. One sees easily that this action is simply transitive. In particular, $\scrP_{\aff}([0,1])$ is weakly contractible. As a variation, one could also fix $(\lambda_0,\lambda_1)$, and consider the space $\scrA_{\aff}([0,1])$ of all those $A$ such that $(A,\lambda_0,\lambda_1)$ satisfies the conditions above. This is again weakly contractible; one can prove that directly, or use the fact that it fits into a weak fibration
\begin{equation} \label{eq:endpoints-fixed}
\scrA_\aff([0,1]) \longrightarrow \scrP_\aff([0,1]) \xrightarrow{(\lambda_0,\lambda_1)} \bR^2.
\end{equation}

\begin{remark}
In principle, one ought to be careful about the choice of topology on infinite-dimensional spaces such as $\scrP_{\aff}([0,1])$. However, for our purpose it is sufficient to know what one means by a smooth map from a finite-dimensional manifold to one of those spaces, which is clear (a smooth family of connections, and so on). In particular, when we talk about ``weak contractibility'' or ``weak fibration'', that is actually meant as a statement about such families. 
\end{remark}

\begin{remark} \label{th:orlov}
For certain purposes (e.g.\ the definition of the ``Orlov functor'' from the Fukaya category of the fibre to that of the Lefschetz fibration), one may want to consider generalizations where $\lambda_0$ and $\lambda_1$ are finite collections of points on $\bR$ (the analogue of \eqref{eq:to-the-right} would say that any point of $\lambda_1^\dag$ should lie to the left of any point of $\lambda_0$). For that to work, it seems that affine transformations should be replaced by more general diffeomorphisms of $\bR$, something that we will not attempt to carry out here.
\end{remark}

\subsection{Boundary-punctured discs}
Next, we introduce the two-dimensional geometry underlying our TQFT. We consider surfaces $S$ which are discs with $d+1 \geq 2$ boundary punctures. These are of the form $S = \bar{S} \setminus \Sigma$, where $\bar{S}$ is a closed oriented disc, and $\Sigma = \{\zeta_0,\dots,\zeta_d\}$ is a set of $(d+1)$ boundary points, numbered compatibly with their cyclic ordering. We write $\partial_jS$ ($j = 0,\dots,d-1$) for the part of $\partial S$ lying between $\zeta_j$ and $\zeta_{j+1}$, and $\partial_dS$ for the part lying between $\zeta_d$ and $\zeta_0$. In addition, $S$ should come with strip-like ends (one negative end, and $d$ positive ones). These ends are proper oriented embeddings with disjoint images
\begin{equation} \label{eq:ends}
\left\{
\begin{aligned}
& \epsilon_0: \bR^{\leq 0} \times [0,1] \longrightarrow S, \\
& \epsilon_1,\dots,\epsilon_d: \bR^{\geq 0} \times [0,1] \longrightarrow S, \\
& \epsilon_j^{-1}(\partial S) = \{ (s,t) \;:\; t = 0,1\}, \\
& \textstyle \lim_{s \rightarrow \pm\infty} \epsilon_j(s,\cdot) = \zeta_j.
\end{aligned}
\right.
\end{equation}
Fix $(A_j,\lambda_{j,0},\lambda_{j,1}) \in \scrP_{\aff}([0,1])$, for $j = 0,\dots,d$. Given those, we want to equip our surface $S$ with a pair $(A,\lambda)$ consisting of
\begin{equation} \label{eq:a-lambda}
A \in \Omega^1(S,\frakg_{\aff}), \quad \lambda \in \smooth(\partial S, \bR),
\end{equation}
such that:
\begin{align}
\label{eq:a-lambda-1}
& \parbox[t]{36em}{$A$ is flat, meaning that \eqref{eq:curvature} vanishes.}
\\ \label{eq:a-lambda-2}
& \parbox[t]{36em}{
Parallel transport for $A$ along any part of the boundary (thought of as acting on $\bR$ by affine transformations) preserves $\lambda$. In other words, $\lambda$ is covariantly constant.} 
\\
\label{eq:a-lambda-3}
& \parbox[t]{36em}{
$\epsilon_j^*A$ is (the pullback by projection to $[0,1]$ of) $A_j$; and $\lambda_{\epsilon_j(s,0)} = \lambda_{j,0}$, $\lambda_{\epsilon_j(s,1)} = \lambda_{j,1}$.}
\end{align}
Let $\scrP_{\aff}(S,\Sigma)$ be the space of such pairs, where the data $(A_j,\lambda_{j,0}, \lambda_{j,1})$ associated to the ends are kept fixed. This carries an action of the group $\scrG_{\aff}(S,\Sigma)$ of those gauge transformations $\Phi \in \smooth(S,G_{\aff})$ which are trivial on the ends.

\begin{prop}
$\scrP_{\aff}(S,\Sigma)$ is weakly contractible.
\end{prop}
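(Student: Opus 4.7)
Plan.

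The approach is to exploit simple connectivity of $S$ — a closed disc with finitely many boundary points removed is still simply connected — in order to exhibit $\scrP_\aff(S,\Sigma)$ as a weak fibration over a finite-dimensional, weakly contractible base.

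First I would fix an interior basepoint $z_0 \in S$ and, for each $(A,\lambda)\in\scrP_\aff(S,\Sigma)$, define $\Phi_A\colon S\to G_\aff$ to be the unique smooth map with $\Phi_A(z_0)=\Id$ and $d\Phi_A = A\,\Phi_A$; flatness of $A$ together with simple connectivity of $S$ make this well-defined, and $A = (d\Phi_A)\,\Phi_A^{-1}$. Condition \eqref{eq:a-lambda-3} forces $\Phi_A\circ\epsilon_j(s,t)=h_j(A)\,\psi_j(t)$, where $\psi_j\colon[0,1]\to G_\aff$ is the parallel transport of the fixed $A_j$ (normalized by $\psi_j(0)=\Id$) and $h_j(A)\in G_\aff$ is a constant. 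Condition \eqref{eq:a-lambda-2} then forces $\Phi_A^{-1}\cdot\lambda$ to be locally constant on $\partial S$; matching the prescribed end values of $\lambda$ yields the cyclic compatibility
\begin{equation*}
\bigl(h_j\,\psi_j(1)\bigr)^{-1}(\lambda_{j,1}) \;=\; h_{j+1}^{-1}(\lambda_{j+1,0}), \qquad j \in \bZ/(d+1)\bZ.
\end{equation*}
This gives a natural map $\scrP_\aff(S,\Sigma) \to N$, where $N \subset G_\aff^{d+1}$ is the locus of tuples $(h_0,\ldots,h_d)$ satisfying these equations.

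Second I would argue this map is a weak fibration with weakly contractible base and fiber. The fiber over $(h_0,\ldots,h_d)\in N$ is the space of smooth maps $\Phi\colon S\to G_\aff$ with $\Phi(z_0)=\Id$ and $\Phi\circ\epsilon_j(s,t)=h_j\,\psi_j(t)$: once such a $\Phi$ is chosen, $A = (d\Phi)\,\Phi^{-1}$ is automatically a flat connection with the prescribed end behavior, and $\lambda$ is then forced by covariant constancy. Since $G_\aff\cong\bR^2$ is convex and the end behavior of $\Phi$ is prescribed on a contractible closed subset of $S$, a straight-line contraction in global coordinates on $G_\aff$ trivializes this mapping space. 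For the base, I would solve the cyclic equations iteratively in $j$: each equation determines $h_{j+1}$ from $h_j$ up to the connected $1$-parameter stabilizer of $\lambda_{j+1,0}\in\bR$ in $G_\aff$, so that after the first $d$ equations the partial solution set is an iterated $\bR$-bundle over $G_\aff \cong \bR^2$; the final cyclic equation imposes one further scalar condition, and the resulting $N$ is diffeomorphic to $\bR^{d+1}$, in particular weakly contractible.

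The main obstacle is twofold. First, one must verify non-emptiness of $N$, i.e.\ consistency of the cyclic system; this is most easily done by explicitly constructing an element of $\scrP_\aff(S,\Sigma)$ by cutting $S$ into strips and simpler disc pieces, using the already-known weak contractibility of $\scrP_\aff([0,1])$ on the strips, and gluing. Second, one must make the weak-fibration property precise in the sense of the preceding Remark — namely, show that a smooth family $M \to N$ parameterized by a finite-dimensional manifold lifts smoothly to a family $M \to \scrP_\aff(S,\Sigma)$. This reduces to producing a parameter-smooth family of maps $\Phi\colon S\to G_\aff$ with the prescribed end behavior, which can be achieved by a cutoff/convex-combination construction in $G_\aff\cong\bR^2$. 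Once both ingredients are in hand, the long exact sequence of the weak fibration together with weak contractibility of base and fiber yields weak contractibility of $\scrP_\aff(S,\Sigma)$.
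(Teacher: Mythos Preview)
Your approach --- trivialize the flat connection via a based gauge transformation, using simple connectivity of $S$ --- is the same core idea as the paper's. However, there is a concrete error in your cyclic compatibility equations. The end $\epsilon_0$ is \emph{negative}, so its $t=0$ and $t=1$ boundaries lie on $\partial_0 S$ and $\partial_d S$ respectively, opposite to the pattern for the positive ends. With your uniform cyclic formula, setting $\mu_j := h_j^{-1}(\lambda_{j,0})$ and using that each $(A_j,\lambda_{j,0},\lambda_{j,1})\in\scrP_\aff([0,1])$ gives $\lambda_{j,0} > g_j^{-1}(\lambda_{j,1})$, one obtains $\mu_0 > \mu_1 > \cdots > \mu_d > \mu_0$, so $N=\emptyset$ --- the very obstacle you flag is in fact produced by the mis-indexing. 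Once the two equations adjacent to $\zeta_0$ are corrected, your $N$ is naturally in bijection with the ordered configuration space
\[
\scrC_\aff(d+1) \;=\; \{\lambda_0^\dag > \cdots > \lambda_d^\dag\} \subset \bR^{d+1}
\]
via $(h_j)\mapsto (\Phi_A^{-1}\cdot\lambda)|_{\partial_j S}$, and both non-emptiness and contractibility of $N$ become immediate; no cut-and-glue construction is needed.

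The paper reaches $\scrC_\aff(d+1)$ more directly and avoids your bookkeeping of end holonomies $h_j$ entirely: it first enlarges to $\scrP_\aff(S)$, where the end data $(A_j,\lambda_{j,0},\lambda_{j,1})$ are allowed to vary, then exhibits $\scrP_\aff(S)\cong \scrG_\aff(S,\bullet)\times\scrC_\aff(d+1)$ by trivializing and reading off the (now locally constant) boundary values, and finally passes back to $\scrP_\aff(S,\Sigma)$ via the weak fibration $\scrP_\aff(S,\Sigma)\to\scrP_\aff(S)\to\scrP_\aff([0,1])^{d+1}$. So your proposal and the paper's proof are really the same argument, but the paper's packaging sidesteps the orientation trap at the negative end and the iterated-$\bR$-bundle analysis of $N$.
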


\begin{proof}
Let's first consider the larger space $\scrP_{\aff}(S)$ where the behaviour over the ends can be modelled on any $(d+1)$-tuple of elements in $\scrP_{\aff}([0,1])$. This is weakly homotopy equivalent to $\scrP_{\aff}(S,\Sigma)$, because it sits in a weak fibration 
\begin{equation}
\scrP_{\aff}(S,\Sigma) \longrightarrow \scrP_{\aff}(S) \longrightarrow \scrP_{\aff}([0,1])^{d+1}.
\end{equation}
Let $\scrG_{\aff}(S)$ be the (weakly contractible) group of gauge transformations which, on each end, are independent of the first coordinate $s$. Every flat connection that appears in $\scrP_{\aff}(S)$ can be trivialized by a gauge transformation in $\scrG_{\aff}(S)$, which is unique up to constants. The gauge-transformed boundary condition $\lambda^\dag$ is locally constant, and its values on the boundary components give rise to a point of the (contractible) configuration space
\begin{equation}
\scrC_{\aff}(d+1) = \big\{\lambda_0^\dag > \cdots > \lambda_d^\dag \big\} \subset \bR^{d+1}.
\end{equation}
This means that
\begin{equation} \label{eq:trivialize-a}
\scrP_\aff(S) \iso \scrG_\aff(S) \times_{G_{\aff}} \scrC_\aff(d+1),
\end{equation}
which implies the desired result. One can simplify the argument slightly, by using the subgroup $\scrG_{\aff}(S,\bullet) \subset \scrG_{\aff}(S)$ of based gauge transformations, which means ones that are trivial at some base point $\bullet \in S$. Then, instead of \eqref{eq:trivialize-a}, one has
$\scrP_{\aff}(S) \iso \scrG_{\aff}(S,\bullet) \times \scrC_{\aff}(d+1)$.
\end{proof}

One can also consider the situation where, in addition to the $(A_j,\lambda_{j,0},\lambda_{j,1})$, we already have a fixed $\lambda$. The resulting choices of $A$ form a space $\scrA_{\aff}(S,\Sigma)$, which sits in a weak fibration
\begin{equation} \label{eq:fix-lambda}
\scrA_{\aff}(S,\Sigma) \longrightarrow \scrP_{\aff}(S,\Sigma) \longrightarrow \smooth_c(\partial S,\bR).
\end{equation}

\begin{corollary} \label{th:affine-a-space}
$\scrA_{\aff}(S,\Sigma)$ is weakly contractible. \qed
\end{corollary}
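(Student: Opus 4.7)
My plan is to derive the corollary formally from the preceding proposition together with the weak fibration \eqref{eq:fix-lambda}, by inspecting the resulting long exact sequence of homotopy groups.

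First, I would note that the base of \eqref{eq:fix-lambda}, namely $\smooth_c(\partial S,\bR)$, parameterises the admissible deformations of $\lambda$ away from its fixed asymptotic values on the ends, and is a real topological vector space. It is therefore strongly contractible via the linear scaling $(f,t)\mapsto (1-t)f$, so in particular (in the smooth finite-dimensional family sense flagged in the Remark) $\pi_k(\smooth_c(\partial S,\bR)) = 0$ for every $k\geq 0$. Combining this with the weak contractibility of the total space $\scrP_{\aff}(S,\Sigma)$ supplied by the proposition, the homotopy long exact sequence
$$\cdots \to \pi_{k+1}(\smooth_c(\partial S,\bR)) \to \pi_k(\scrA_{\aff}(S,\Sigma)) \to \pi_k(\scrP_{\aff}(S,\Sigma)) \to \pi_k(\smooth_c(\partial S,\bR)) \to \cdots$$
attached to \eqref{eq:fix-lambda} squeezes the middle term to zero, giving $\pi_k(\scrA_{\aff}(S,\Sigma)) = 0$ for all $k$, which is precisely the claim.

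The one non-formal input is the assertion that \eqref{eq:fix-lambda} is a weak fibration, and I expect this to be the main obstacle: given a smooth family $\lambda_r$ of admissible boundary functions together with a compatible flat connection $A_0$ at $r=0$, one must produce a smooth family of flat $A_r$ satisfying \eqref{eq:a-lambda-2}--\eqref{eq:a-lambda-3}. The gauge-theoretic description \eqref{eq:trivialize-a} from the previous proof makes this tractable. Namely, after trivialising $A_0$ by a based gauge transformation, the boundary data $\lambda_r$ correspond to smoothly varying points in the configuration space $\scrC_{\aff}(d+1)$ (together with their prescribed values along the ends), and the required family of lifts is then obtained by choosing a smooth path in the weakly contractible based gauge group $\scrG_{\aff}(S,\bullet)$ transporting the trivialising frame so as to match the moving boundary function. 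This parallels exactly the gauge fixing carried out in the proof of the proposition, and poses no new difficulty.
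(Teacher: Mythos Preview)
Your argument is correct and matches the paper's: the corollary is stated with an immediate \qed, the intended proof being precisely that $\scrA_{\aff}(S,\Sigma)$ is the fibre of the weak fibration \eqref{eq:fix-lambda} whose total space and base are both (weakly) contractible. Your extra paragraph justifying the weak fibration property goes beyond what the paper spells out, but is in the same spirit as the gauge-theoretic trivialisation used in the preceding proposition.
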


\subsection{A bit of hyperbolic geometry}
As a toy model for Lefschetz fibrations, we will take the target space of the theory to be the upper half-plane. Write
\begin{equation}
\begin{aligned}
& W = \{\mathrm{im}(w) > 0\} \subset \bC, \\
& \bar{W} = \{\mathrm{im}(w) \geq 0\} \cup \{\infty\}, \\
& \partial_\infty W = \partial \bar{W} = \bR \cup \{\infty\}.
\end{aligned}
\end{equation}
The $G_{\mathit{aff}}$-action on the real line extends to $\bar{W}$, fixing $\infty$. On the Lie algebra level, we denote by $\bar{X}_\gamma$ the holomorphic vector field on $\bar{W}$ associated to $\gamma \in \frakg_{\mathit{aff}}$, and by $X_\gamma$ its restriction to $W$. The action on $W$ preserves the hyperbolic area form
\begin{equation}
\omega_W = \frac{d\mathrm{re}(w) \wedge d\mathrm{im}(w)}{\mathrm{im}(w)^2},\end{equation}
as well as its primitive
\begin{equation} \label{eq:infinity-primitive}
\theta_W = \frac{d\mathrm{re}(w)}{\mathrm{im}(w)} = -d^c \log(\mathrm{im}(w)).
\end{equation}
Since $L_{X_\gamma}\theta_W = 0$, the Hamiltonian inducing the vector field $X_\gamma$ can be taken to be
\begin{equation} \label{eq:ha}
H_{\gamma} = \theta_W(X_\gamma).%
\end{equation}
These functions are compatible with Poisson brackets:
\begin{equation}
\begin{aligned}
H_{[\gamma_1,\gamma_2]} & = \theta_W([X_{\gamma_1},X_{\gamma_2}]) = -d\theta_W(X_{\gamma_1},X_{\gamma_2}) + X_{\gamma_1}. \theta_W(X_{\gamma_2}) - 
X_{\gamma_2}.\theta_W(X_{\gamma_1}) \\ & = \omega_W(X_{\gamma_1},X_{\gamma_2}) =
\{H_{\gamma_1}, H_{\gamma_2}\}.
\end{aligned}
\end{equation}

\subsection{Geometric structures associated to flat connections\label{subsec:package}}
Let $S$ be an arbitrary connected Riemann surface with boundary; we denote its complex structure by $j$. Let's equip $S$ with a pair $(A,\lambda)$ as in \eqref{eq:a-lambda}, which satisfies \eqref{eq:a-lambda-1}, \eqref{eq:a-lambda-2}. 
Through the Lie algebra homomorphisms $\gamma \mapsto X_\gamma$ and $\gamma \mapsto H_\gamma$, $A$ induces one-forms $X_A$ and $H_A$ on $S$ with values in, respectively, $\smooth(W,TW)$ and $\smooth(W,\bR)$. 
One can think of $X_A$ as an Ehresmann connection on the (trivial) fibre bundle 
\begin{equation} \label{eq:trivial-fibre-bundle}
S \times W \longrightarrow S,
\end{equation}
which lifts any vector field $\xi$ on $S$ to the vector field $\xi + X_A(\xi)$ on $S \times W$. On the Hamiltonian level, flatness of the connection is expressed by the identity
\begin{equation} \label{eq:poisson-flatness}
\partial_t H_A(\partial_s) - \partial_s H_A(\partial_t) + \{ H_A(\partial_s), H_A(\partial_t) \} = 0.
\end{equation}
There is an associated closed $\omega_A \in \Omega^2(S \times W)$, which agrees with $\omega_W$ on each fibre, and which vanishes after contraction with any $\xi + X_A(\xi)$. In local coordinates as before, 
\begin{equation} \label{eq:omega-a}
\begin{aligned}
\omega_A & = \omega_W + \omega_W(X_A(\partial_s), \cdot) \wedge \mathit{ds}
+ \omega_W(X_A(\partial_t),\cdot) \wedge \mathit{dt} -
\omega_W(X_A(\partial_s), X_A(\partial_t)) \mathit{ds} \wedge \mathit{dt} 
\\
& = \omega_W - d\big(H_A(\partial_s) \mathit{ds} + H_A(\partial_t) \mathit{dt}\big).
\end{aligned}
\end{equation}
In the second line of \eqref{eq:omega-a}, we consider $H_A(\partial_s) \mathit{ds} + H_A(\partial_t) \mathit{dt}$ as a one-form on $S \times W$, vanishing in $TW$ direction, and take its exterior derivative. The equality between the two lines uses \eqref{eq:poisson-flatness}. There is an evident choice of primitive,
\begin{equation} \label{eq:a-primitive}
\theta_A = \theta_W - H_A(\partial_s) \mathit{ds} - H_A(\partial_t) \mathit{dt}.
\end{equation}
Similarly, we get a complex structure $J_A$ on $S \times W$, which is such that projection to $S$ is $J_A$-holomorphic, and which restricts to the standard complex structure on each $W$ fibre. It is characterized by those properties, together with the fact that
\begin{equation}
J_A\big(\partial_s + X_A(\partial_s)\big) = \partial_t + X_A(\partial_t).
\end{equation}
The associated pairing $\omega_A(\cdot,J_A\cdot)$ is symmetric and satisfies
\begin{equation}
\omega_A(\sigma, J_A \sigma) \geq 0, 
\end{equation}
with equality iff $\sigma = \xi + X_A(\xi)$ for some $\xi \in TS$. Note that our connection extends to $S \times \bar{W}$, and so does $J_A$; we denote the extensions by $\bar{X}_A$ and $\bar{J}_A$. The function $\lambda$ gives rise to a $J_A$-totally real submanifold
\begin{equation}
\Lambda = \{ (z,w) \in \partial S \times W \; :\; \mathrm{re}(w) = \lambda_z \}.
\end{equation}
Both $\omega_A$ and $\theta_A$ vanish when restricted to $\Lambda$. The closure $\bar\Lambda \subset \partial S \times \bar{W}$, obtained by adding two points at infinity to each fibre, is a submanifold with boundary.

In a way, this discussion has been overkill. If we have a gauge transformation $\Phi \in \smooth(S,G_{\aff})$ and two sets of data 
\begin{equation} \label{eq:dagger}
(A,\lambda) = \Phi_*(A^\dag,\lambda^\dag), 
\end{equation}
then the induced fibrewise automorphism of \eqref{eq:trivial-fibre-bundle} maps all the geometric structures associated to $(A^\dag,\lambda^\dag)$ to their counterparts for $(A,\lambda)$. Locally, one can gauge transform any $(A,\lambda)$ to the trivial choice $(A^\dag,\lambda^\dag) = (0,0)$. In that case, $\omega_{A^\dag}$ and $\theta_{A^\dag}$ are the pullbacks of $\omega_W$ and $\theta_W$ by projection; $J_{A^\dag}$ is the product complex structure; and $\Lambda^\dag = \partial S \times i\bR^{>0}$. This provides easy proofs of several general properties stated above (integrability of $J_A$, and vanishing of $\omega_A|\Lambda$, $\theta_A|\Lambda$).

\subsection{Maps to the half-plane\label{subsec:maps}}
Let $S$ and $(A,\lambda)$ be as before. Consider the following Cauchy-Riemann equation for maps $u: S \rightarrow W$:
\begin{equation} \label{eq:u-equation}
\left\{
\begin{aligned}
& (Du - X_A)^{0,1} = 0, \\
& \mathrm{re}(u(z)) = \lambda_z \quad \text{for $z \in \partial S$.}
\end{aligned}
\right.
\end{equation}
The first line means that for each $z \in S$, $Du_z - X_{A,z}: TS_z \rightarrow TW_{u(z)} = \bC$ is complex-linear. An equivalent way of formulating \eqref{eq:u-equation} is to consider the section $v(z) = (z,u(z))$ of \eqref{eq:trivial-fibre-bundle}. This will be a holomorphic section with totally real boundary conditions:
\begin{equation} \label{eq:v-equation}
\left\{
\begin{aligned}
& J_A \circ Dv = Dv \circ j, \\
& v(\partial S) \subset \Lambda.
\end{aligned}
\right.
\end{equation}
The energy of a solution is
\begin{equation} \label{eq:toy-energy}
E(u) = \int_S \|Du-X_A\|^2_W 
=  \int_S v^*\omega_A.
\end{equation}
Here $\|\cdot\|_W$ is the norm derived from the hyperbolic metric: $\|Du-X_A\|_W = |Du-X_A|/\mathrm{im}(u)$ (our convention for the norm of a linear map $TS \rightarrow \bC$ is half of the usual one, which is why \eqref{eq:toy-energy} is missing the standard $\half$ factor). Because $\theta_A|\Lambda = 0$, $\int_S v^*\omega_A$ is a ``topological'' quantity for sections with boundary values in $\Lambda$; by that, we mean that it is invariant under compactly supported deformations within that space of sections. 

\begin{example}
If $S$ is compact, we have $E(u) = 0$ by Stokes; in that case, the only possible solutions are those which satisfy $Du = X_A$ everywhere.
\end{example}

In the situation of \eqref{eq:dagger}, if $u$ is a solution of \eqref{eq:u-equation} for $(A,\lambda)$, applying $\Phi^{-1}$ pointwise yields a solution $u^\dag$ of the corresponding equation for $(A^\dag,\lambda^\dag)$. Hence, all local considerations can be addressed by reducing to $(A^\dag,\lambda^\dag) = (0,0)$, in which case $u^\dag$ is just a holomorphic function with real boundary condition:
\begin{equation} \label{eq:holo}
\left\{
\begin{aligned}
& \bar\partial u^\dag = 0, \\
& \mathrm{im}(u^\dag) > 0, \\
& \mathrm{re}(u^\dag|\partial S) = 0.
\end{aligned}
\right.
\end{equation}

\begin{lemma} \label{th:schwarz}
Let $u$ be a solution of \eqref{eq:u-equation} defined on the unit disc $S = B = \{|z|<1\} \subset \bC$. Then we have the pointwise bound $\|Du-X_A\|_W \leq 2/(1-|z|^2)$. The same holds for a solution defined on a half-disc $S = C = \{|z| < 1, \; \mathrm{re}(z) \geq 0\}$.
\end{lemma}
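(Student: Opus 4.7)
The plan is to exploit the gauge-trivialization reduction already noted in the discussion around \eqref{eq:dagger}. Since $B$ and $C$ are simply connected, the flat connection $A$ admits a global trivialization: there exists $\Phi \in \smooth(S,G_\aff)$ with $\Phi_*A = 0$. After this, $\lambda^\dag = \Phi \cdot \lambda$ is constant along the (connected) boundary of $S$, and can be normalized to $0$ by composing $\Phi$ with a constant translation in $G_\aff$. The resulting map $u^\dag = \Phi^{-1}(u)$ then satisfies \eqref{eq:holo}. Crucially, each $\Phi_z \in G_\aff$ is a hyperbolic isometry of $W$, so the hyperbolic norm is preserved pointwise and
\begin{equation*}
\|Du - X_A\|_W(z) \; = \; \|Du^\dag\|_W(z) \; = \; \frac{|(u^\dag)'(z)|}{\mathrm{im}(u^\dag(z))}.
\end{equation*}
The problem thus reduces to bounding the right-hand side.

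For $S = B$, the desired estimate is verbatim the classical Schwarz-Pick lemma for a holomorphic map $u^\dag:B \to W$ between the two standard hyperbolic model domains, with Poincaré densities $2/(1-|z|^2)$ on $B$ and $1/\mathrm{im}(w)$ on $W$. For $S = C$, I would first extend $u^\dag$ to a holomorphic map $\tilde u : B \to W$ by Schwarz reflection across the imaginary axis. Explicitly, set $\tilde u(z) = u^\dag(z)$ for $\mathrm{re}(z) \geq 0$ and $\tilde u(z) = -\overline{u^\dag(-\bar z)}$ for $\mathrm{re}(z) \leq 0$. The boundary condition $\mathrm{re}(u^\dag) = 0$ on $\partial C$ guarantees that the two formulas agree along $\{\mathrm{re}(z) = 0\}$; the antiholomorphic involution $w \mapsto -\bar w$ preserves $W$, so $\tilde u$ is holomorphic and $W$-valued on all of $B$. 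Applying Schwarz-Pick to $\tilde u$ then yields the claimed bound at every $z \in C$.

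The only step that is not pure bookkeeping is the Schwarz reflection for the half-disc, and it works precisely because the normalized boundary $\{\mathrm{re}(w) = 0\}$ is a hyperbolic geodesic in $W$, stabilized by the antiholomorphic isometry $w \mapsto -\bar w$. Beyond that, everything is a direct consequence of the classical Schwarz-Pick lemma together with the fact that $G_\aff$ acts by hyperbolic isometries of $W$.
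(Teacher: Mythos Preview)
Your proof is correct and follows essentially the same route as the paper's: gauge-trivialize to reduce to \eqref{eq:holo}, invoke the classical Schwarz(-Pick) lemma on $B$, and handle the half-disc $C$ by Schwarz reflection across the boundary geodesic before applying the disc case. The only cosmetic imprecision is that the discussion of normalizing $\lambda^\dag$ in your first paragraph is vacuous for $S = B$ (which has empty boundary), but this does not affect the argument.
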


\begin{proof}
The first part is obtained by reducing to \eqref{eq:holo}, where it's the classical Schwarz Lemma. For the second part, one additionally uses the reflection principle to extend the solution of \eqref{eq:holo} from $C$ to $B$.
\end{proof}
 
\begin{lemma} \label{th:compactness-0}
Let $u_k: S \rightarrow W$ be a sequence of solutions of \eqref{eq:u-equation}. Suppose that there are points $z_k$ contained in a compact subset of $S$, such that $u_k(z_k) \rightarrow \partial_\infty W$. Then $u_k \rightarrow \partial_\infty W$ uniformly on compact subsets. Moreover, a subsequence converges (in the same sense) to a map $u_\infty: S \rightarrow \partial_\infty W$ which satisfies
\begin{equation}
\left\{
\begin{aligned}
& Du_\infty = \bar{X}_A, \\
& u_\infty(z) = \lambda_z \text{ or } \infty \quad \text{for $z \in \partial S$.}
\end{aligned}
\right.
\end{equation}
\end{lemma}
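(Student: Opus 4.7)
The plan is to combine the Schwarz-type bound from Lemma~\ref{th:schwarz} with an Arzel\`a--Ascoli argument, working at the level of the gauge-trivialized equation \eqref{eq:holo}. All questions being local on $S$, first cover any compact subset of $S$ by finitely many coordinate charts (modelled on discs or half-discs) carrying gauge transformations $\Phi$ that send $(A,\lambda)$ to $(A^\dag,\lambda^\dag) = (0,0)$. In each chart the $u_k^\dag$ become honest holomorphic maps with real boundary values, and Lemma~\ref{th:schwarz} gives a uniform bound on the hyperbolic derivative $\|Du_k^\dag\|_W$ on any slightly smaller compact subset, exhibiting each $u_k^\dag$ as uniformly hyperbolically Lipschitz on compact subsets of the chart.

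To upgrade $u_k(z_k) \to \partial_\infty W$ to uniform convergence on compact subsets of $S$, note that the hyperbolic Lipschitz property means the image of a ball of bounded radius in $S$ lies in a hyperbolic ball of bounded radius around $u_k^\dag(z_k)$; such balls shrink to a point in the Euclidean topology of $\bar W$ as their centre approaches $\partial_\infty W$, so the convergence at $z_k$ propagates to a neighbourhood of $z_k$. Chaining overlapping charts and invoking compactness then extends the conclusion to any compact subset of $S$, and it transports from $u_k^\dag$ to $u_k$ because $G_{\aff}$ preserves $\partial_\infty W$ setwise.

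For the subsequential limit and the limit equation, I would pass to the disc model $\bar B$ of $\bar W$ via the Cayley transform $w \mapsto (w-i)/(w+i)$, in which the Schwarz bound rewrites as $|D\tilde u_k^\dag| \leq C(1-|\tilde u_k^\dag|^2)$ on compacts. Uniform convergence to $\partial B$ then forces the Euclidean derivatives to vanish uniformly on compacts, which together with uniform boundedness in $\bar B$ lets Arzel\`a--Ascoli (with a diagonal extraction over an exhaustion of $S$) produce a subsequence converging uniformly on compacts to a limit $\tilde u_\infty^\dag : S \to \partial B$, which is locally constant because its derivative vanishes. Transporting back through $\Phi$ and the Cayley map yields $u_\infty : S \to \partial_\infty W$; the identity $Du_\infty^\dag = 0 = \bar X_{A^\dag}$ in the trivial gauge becomes $Du_\infty = \bar X_A$ after gauge transformation, using that the constructions of Section~\ref{subsec:package} extend naturally over $\partial_\infty W$ via $\bar X_A$. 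The boundary values lie in $\{\lambda_z, \infty\}$ because the closure of $i\bR^{>0}$ in $\partial_\infty W$ (in the trivial gauge) is exactly $\{0,\infty\} = \{\lambda_z^\dag,\infty\}$.

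The main subtlety I anticipate is the non-uniform behaviour of the Euclidean derivative estimate near the two ``places at infinity'' in the half-plane picture; passing to the disc model, where $\partial_\infty W = \partial B$ is compact and the Schwarz bound is symmetric in both boundary limits, is the key move that makes the Arzel\`a--Ascoli extraction go through cleanly.
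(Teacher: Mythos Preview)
Your argument is correct but follows a genuinely different route from the paper's. The paper first uses the Schwarz bound to extract a subsequential limit $u_\infty: S \to \bar W$ (in the compactified target), and only then shows that $u_\infty^{-1}(\partial_\infty W)$ is all of $S$, via a local complex-analytic case analysis: the open mapping theorem at interior points, and at boundary points a reflection-principle argument followed by a Taylor-series sign analysis (with the case $u_\infty^\dag(0)=\infty$ reduced to $u_\infty^\dag(0)=0$ via $w \mapsto -1/w$). You instead exploit the Schwarz bound as a hyperbolic Lipschitz estimate and propagate the single-point convergence $u_k(z_k)\to\partial_\infty W$ to uniform convergence on compacta directly, using that hyperbolic $R$-balls centred at points escaping to $\partial_\infty W$ themselves escape every compact subset of $W$; then the disc-model rewriting $|D\tilde u_k^\dag|\leq C(1-|\tilde u_k^\dag|^2)\to 0$ forces the limit to be locally constant without any further complex analysis. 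Your approach is more elementary in that it sidesteps the open mapping theorem and the boundary case analysis entirely. The paper's approach, on the other hand, fits the standard template of ``extract a limiting pseudo-holomorphic map in a compactified target, then constrain it'', which is exactly the pattern reused later (Lemmas~\ref{th:compactness-copy} and \ref{th:gromov-2}) where the target is no longer the hyperbolic plane and a pure metric-propagation argument is unavailable.
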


\begin{proof}
Restrict to a fixed compact subset of $S$, and use any Riemannian metric on $W$ that extends to $\bar{W}$. Then, Lemma \ref{th:schwarz} yields bounds on $Du_k$ with respect to that metric. It follows that a subsequence converges, uniformly on compact subsets, to some $u_\infty: S \rightarrow \bar{W}$, which is again a solution of \eqref{eq:u-equation} (with boundary conditions that are the closure of the previous ones). Moreover, there is at least one point $z_\infty \in S$ for which $u_\infty(z_\infty) \in \partial \bar{W}$. 

Let's apply a gauge transformation locally near $z_\infty$, which relates $(A,\lambda)$ to $(A^\dag,\lambda^\dag) = (0,0)$, and correspondingly $u_\infty$ to a holomorphic map $u^\dag_\infty$. In local coordinates in which $z_\infty = 0$, the situation is one of the following (the notation $B$ and $C$ is taken from Lemma \ref{th:schwarz}):
\begin{align}
&  \label{eq:lim1}
u^\dag_\infty: B \rightarrow \bar{W}, \;\; u^\dag_\infty(0) \in \partial \bar{W}.
\\ & \label{eq:lim2}
u^\dag_\infty: C \rightarrow \bar{W},\;\; u^\dag_\infty(z) \in i\bR^{\geq 0} \cup \{\infty\} \text{ for }
z \in \partial C, \text{ and } u^\dag_\infty(0) = 0. 
\\ & \label{eq:lim3}
\text{As in \eqref{eq:lim2}, but with } u^\dag_\infty(0) = \infty \text{ instead.}
\end{align}
In the first instance, the open mapping principle shows that $u^\dag_\infty$ must be constant. One arrives at the same conclusion for \eqref{eq:lim2}, as follows: after applying the reflection principle to extend $u^\dag_\infty$ to $B$, write $u^\dag_\infty(z) = \sum_{k \geq 1} ia_k z^k$ near $z = 0$, with (because of the boundary condition) $a_k \in \bR$. Suppose that $u^\dag_\infty$ is not constant, and let $a_k$ be the first nonzero coefficient. If $a_k > 0$, then $\mathrm{im}(u^\dag_\infty(r e^{\pi i/k}))<0$ for small $r>0$; and if $a_k < 0$, the same holds for $\mathrm{im}(u^\dag_\infty(r))$. This leads to a contradiction. Finally, \eqref{eq:lim3} can be reduced to \eqref{eq:lim2} by passing to $-1/u^\dag_\infty$. 

Translating back, we see that near $z_\infty$, $u_\infty$ takes values in $\partial_\infty W$ and satisfies $Du_\infty = \bar{X}_A$. By the same argument, the subset where this holds is open and closed, hence everything. We have now shown that a subsequence converges to $\partial_\infty W$; but since that applies to any choice of subsequence of the original sequence as well, it follows that $u_k \rightarrow \partial_\infty W$.
\end{proof}

\section{Hyperbolic isometries\label{sec:hyperbolic}}

The affine automorphisms of the upper half plane, on which our previous construction was based, can be thought of as hyperbolic isometries fixing a point at infinity. Picking such a privileged point breaks the natural symmetry, and that eventually becomes an obstruction for further developments. We will therefore revisit our setup, dropping that restriction.

\subsection{Rational transformations and their lifts}
We will use the group 
\begin{equation}
G = \mathit{PSL}_2(\bR)
\end{equation}
of (orientation-preserving) rational transformations of $\bR P^1 = \bR \cup \{\infty\}$, and its Lie algebra $\frakg$. For convenience, let's identify $\bR P^1 = \bR/2\pi\bZ$ (so that $\infty$ corresponds to $\pi$). On the universal cover $\tilde{G} \rightarrow G$, the action lifts to $\bR \rightarrow \bR P^1$. The role of \eqref{eq:triples} in our new context will be played by triples $(A,\tilde\lambda_0,\tilde\lambda_1)$ consisting of
\begin{equation}
A \in \Omega^1([0,1], \frakg), \quad \tilde\lambda_0, \tilde\lambda_1 \in \bR,
\end{equation}
with the following property. Let $\tilde{g} \in \tilde{G}$ be the natural lift of the parallel transport map $g \in G$ of $A$ (this exists since $g$ is the endpoint of a path starting at the identity). Write $\tilde\lambda_1^\dag = \tilde{g}^{-1}(\tilde\lambda_1)$. We require that:
\begin{equation}
\parbox[t]{36em}{
$\tilde{\lambda}_0$ and $\tilde\lambda_1^\dag$ should map to distinct points in $\bR P^1$. Moreover,
$\tilde{\lambda}_0$ should be the first point in its fibre over $\bR P^1$ that can be reached from $\tilde{\lambda}_1^\dag$ by moving in positive direction: this means that $\tilde{\lambda}_0 - \tilde{\lambda}_1^\dag \in (0,2\pi)$.}
\end{equation}
Let $\scrP([0,1])$ be the space of all such triples. It carries an action of $\scrG([0,1]) = \smooth([0,1],\tilde{G})$. That action is easily seen to be transitive, with each point having a stabilizer isomorphic to $\bR$ (the subgroup of $G$ that fixes two distinct points on $\bR P^1$). As a consequence, $\scrP([0,1])$ is weakly contractible. One can also consider the subspace $\scrA([0,1])$ where $(\tilde{\lambda}_0,\tilde{\lambda}_1)$ is fixed, which is again weakly contractible, by an argument parallel to \eqref{eq:endpoints-fixed}.

\begin{remark}
As mentioned before, one can identify $G_\aff$ with the subgroup of $G$ that fixes $\infty$. Elements of $G_\aff$ have preferred preimages in $\tilde{G}$. Similarly, any $\lambda \in \bR P^1 \setminus \{\infty\}$ has a unique lift $\tilde{\lambda} \in (-\pi,\pi) \subset \bR$. Hence, $\scrP_\aff([0,1])$ can be considered as a subset of $\scrP([0,1])$.
%
\end{remark}

Any $\tilde{g} \in \tilde{G}$ has an associated rotation number $\mathrm{rot}(\tilde{g}) \in \bR$. We will be only interested in the situation where the underlying $g \in G$ is hyperbolic, in which case the rotation number is an integer. More precisely, let $l_{\mathit{small}},l_{\mathit{big}} \in \bR P^1$ be the two eigenvectors of $g$, where the convention is that $l_{\mathit{small}}$ belongs to the eigenvalue with absolute value $<1$. Then, for any $\tilde{l} \in \bR$ with image $l \in \bR P^1$,
\begin{equation} \label{eq:rotation-number-and-shift}
\tilde{g}(\tilde{l}) - \tilde{l} - 2\pi \mathrm{rot}(\tilde{g})
\begin{cases}
= 0 & \text{if $l = l_{\mathit{small}}$ or $l_{\mathit{big}}$,} \\
\in (0,2\pi) & \text{if $l \in (l_\mathit{small},l_\mathit{big})$,} \\
\in (-2\pi,0) & \text{if $l \in (l_\mathit{big},l_\mathit{small})$.}
\end{cases}
\end{equation}
Here, $(l_\mathit{small},l_\mathit{big}) \subset \bR/2\pi\bZ$ stands for the open interval in the circle bounded by $l_\mathit{small}$ on the left and $l_\mathit{big}$ on the right; and correspondingly for $(l_\mathit{big},l_\mathit{small})$. With these preliminaries at hand, we can introduce the ``closed string'' analogue of the previous definition. For $\tau>2$, let $\scrP_\tau(S^1)$ be the space of those $A \in \Omega^1(S^1,\frakg)$ such that:
\begin{equation}
\parbox[t]{36em}{The holonomy (parallel transport around $S^1$) of $A$ is a hyperbolic element $g \in G$, with $|\mathrm{tr}(g)| = \tau$; and its natural lift $\tilde{g}$ has rotation number $1$.}
\end{equation}
The group $\scrG(S^1) = \smooth(S^1,\tilde{G})$ acts transitively on this space, and each point has stabilizer isomorphic to $\bZ \times \bR$. Hence, we get a weak homotopy equivalence
\begin{equation}
\scrP_\tau(S^1) \htp \bR P^1,
\end{equation}
which can be realized by mapping each $A$ to an eigenvector (either $l_\mathit{small}$ or $l_\mathit{big}$) of $g$.

\subsection{Boundary-punctured discs revisited}
Let $S$ be a disc with $(d+1)$ boundary punctures, and strip-like ends \eqref{eq:ends}. Fix $(A_j, \tilde{\lambda}_{j,0}, \tilde{\lambda}_{j,1}) \in \scrP([0,1])$ for $j = 0,\dots,d$. Given that, we consider pairs $(A,\tilde{\lambda})$, where
\begin{equation} \label{eq:tilde-pair}
A \in \Omega^1(S,\frakg), \quad \tilde\lambda \in \smooth(\partial S, \bR).
\end{equation}
Denote the image of $\tilde\lambda$ by $\lambda \in \smooth(\partial S, \bR P^1)$. We impose the following analogues of \eqref{eq:a-lambda-1}--\eqref{eq:a-lambda-3}:
\begin{align}
& \label{eq:new-a-1}
\parbox{36em}{$A$ is flat.}
\\ & \label{eq:new-a-2}
\parbox{36em}{Parallel transport along any part of $\partial S$ preserves $\lambda$.}
\\ & \label{eq:new-a-3}
\parbox{36em}{$\epsilon^*_jA = A_j$; and $\tilde{\lambda}_{\epsilon_j(s,0)} = \tilde\lambda_{j,0}$, $\tilde{\lambda}_{\epsilon_j(s,1)} = \tilde\lambda_{j,1}$.}
\end{align}
Let $\scrP(S,\Sigma)$ be the space of all such \eqref{eq:tilde-pair}. It carries an action of the group $\scrG(S,\Sigma)$ of those $\tilde\Phi \in \smooth(S,\tilde{G})$ such that, for each $j$: $\tilde\Phi_{\epsilon_j(s,t)}$ is independent of $s$, and lies in the subgroup of $\scrG([0,1])$ which stabilizes $(A_j, \tilde{\lambda}_{j,0}, \tilde{\lambda}_{j,1})$. 

\begin{prop}
$\scrP(S,\Sigma)$ is weakly contractible.
\end{prop}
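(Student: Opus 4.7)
The plan is to recycle the three-step structure of the affine proposition verbatim, substituting the universal cover $\tilde G$ for $G_\aff$ throughout.

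First, enlarge the space: let $\scrP(S)$ denote the space obtained from $\scrP(S,\Sigma)$ by allowing the restriction $\epsilon_j^*(A,\tilde\lambda)$ to each end to be an arbitrary element of $\scrP([0,1])$. Restriction to the ends yields a weak fibration
\begin{equation*}
\scrP(S,\Sigma) \longrightarrow \scrP(S) \longrightarrow \scrP([0,1])^{d+1},
\end{equation*}
whose base is weakly contractible since each factor is. It therefore suffices to show $\scrP(S)$ is weakly contractible.

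Next, fix a base point $\bullet \in S$ and introduce the based gauge group $\scrG(S,\bullet)$, defined analogously to its affine counterpart, which is weakly contractible. Because $S$ is simply connected, every flat $A$ admits a unique based trivialization $\tilde\Phi \in \scrG(S,\bullet)$. Under this trivialization, $\tilde\lambda$ becomes locally constant on $\partial S$, producing a tuple $(c_0,\dots,c_d) \in \bR^{d+1}$ of values on the $d+1$ boundary components. The $\scrP([0,1])$ constraint at each puncture becomes, after trivialization, a bound of the form $0 < c_i - c_j < 2\pi$ for the relevant adjacent pair, with the sign conventions and the role of the negative puncture handled exactly as in the affine case. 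The resulting subset $\scrC(d+1) \subset \bR^{d+1}$ is cut out by finitely many such linear inequalities and is therefore convex, hence contractible. This identifies $\scrP(S) \iso \scrG(S,\bullet) \times \scrC(d+1)$ as a product of two weakly contractible spaces, completing the argument.

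The main obstacle---in the sense of the one genuinely new ingredient not already handled in the affine proof---is checking that the residual constant gauge action by $\tilde G$ preserves $\scrC(d+1)$, which is what makes the twisted-product identification $\scrP(S) \iso \scrG(S) \times_{\tilde G} \scrC(d+1)$ (and its based streamlining above) consistent. This reduces to a one-line observation: every $\tilde h \in \tilde G$ acts on $\bR$ as an orientation-preserving diffeomorphism commuting with translation by $2\pi$, so if $c_j < c_i < c_j + 2\pi$, then by monotonicity and equivariance $\tilde h(c_j) < \tilde h(c_i) < \tilde h(c_j) + 2\pi$. Nothing else in the argument is sensitive to the replacement of $G_\aff$ by $\tilde G$; in particular, the fact that stabilizers in $\scrP([0,1])$ are isomorphic to $\bR$ rather than trivial is invisible at the level of weak homotopy type.
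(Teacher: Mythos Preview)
Your proof is correct and follows essentially the same route as the paper: enlarge to $\scrP(S)$ via the weak fibration over $\scrP([0,1])^{d+1}$, trivialize flat connections using the based gauge group $\scrG(S,\bullet)$, and identify $\scrP(S) \iso \scrG(S,\bullet) \times \scrC(d+1)$ with the convex configuration space $\scrC(d+1) = \{\tilde\lambda_0^\dag > \cdots > \tilde\lambda_d^\dag,\; \tilde\lambda_0^\dag - \tilde\lambda_d^\dag \in (0,2\pi)\}$. Your final paragraph on $\tilde G$-invariance of $\scrC(d+1)$ is correct but unnecessary once you have committed to the based gauge group (the paper omits it for that reason); it would only be needed for the unbased twisted-product formulation.
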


\begin{proof}
Let $\scrP(S)$ be the larger space where the behaviour over the ends can be modelled on any $(d+1)$-tuple of elements in $\scrP([0,1])$. This is weakly homotopy equivalent to $\scrP(S,\Sigma)$, because it sits in a weak fibration
\begin{equation}
\scrP(S,\Sigma) \longrightarrow \scrP(S) \longrightarrow \scrP([0,1])^{d+1}.
\end{equation}
Let $\scrG(S,\bullet)$ be the group of those $\tilde{\Phi} \in \smooth(S,\tilde{G})$ which, on each end, are independent of $s$, and which are trivial at some base point. Using such gauge transformations to trivialize the connection, we get that
$\scrP(S) \iso \scrG(S,\bullet) \times \scrC(d+1)$,
where 
\begin{equation} \label{eq:tilde-c-2}
\scrC(d+1) = \big\{\tilde{\lambda}_0^\dag > \cdots > \tilde{\lambda}_d^\dag, \;\;
\tilde{\lambda}_0^\dag - \tilde{\lambda}_d^\dag \in (0,2\pi) \big\} \subset \bR^{d+1}.
\end{equation}
Both $\scrG(S,\bullet)$ and $\scrC(d+1)$ are weakly contractible, and this implies the desired result.
\end{proof}

Suppose that, in addition to the $(A_j,\tilde{\lambda}_{j,0},\tilde{\lambda}_{j,1})$, we already have a fixed $\tilde{\lambda}$. The remaining space $\scrA(S,\Sigma)$ of all possible choices of $A$ satisfies the analogue of \eqref{eq:fix-lambda}, hence:

\begin{corollary} \label{th:locally-constant-1}
$\scrA(S,\Sigma)$ is weakly contractible. \qed
\end{corollary}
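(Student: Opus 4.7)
The plan is to establish the analogue of the weak fibration \eqref{eq:fix-lambda} in the hyperbolic setting, then conclude via the long exact sequence of homotopy groups. For any reference choice $\tilde{\lambda}_*$ of boundary data compatible with the fixed end conditions, the assignment $(A, \tilde{\lambda}) \mapsto \tilde{\lambda} - \tilde{\lambda}_*$ gives a sequence
\begin{equation*}
\scrA(S,\Sigma) \longrightarrow \scrP(S,\Sigma) \longrightarrow \smooth_c(\partial S, \bR),
\end{equation*}
whose fiber over $0$ is $\scrA(S,\Sigma)$ by definition. The target is a convex topological vector space, hence weakly contractible, and the total space is weakly contractible by the preceding proposition. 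So once this sequence is verified to be a weak fibration, the long exact sequence of homotopy groups immediately yields $\pi_n(\scrA(S,\Sigma)) = 0$ for all $n$.

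The content is therefore the weak fibration property. Given a smooth finite-dimensional family $\tilde{\lambda}_u$ with a lift $A_{u_0}$ at some basepoint $u_0$, I would construct a smooth family $A_u$ as follows. On each boundary component, transitivity of the $\frakg$-action on $\bR P^1$ allows the covariant-constancy equation $\partial_t \tilde{\lambda}_u = X_{A_u|_{\partial S}}(\tilde{\lambda}_u)$ to be solved smoothly in $u$ for $A_u|_{\partial S}$; the one-dimensional freedom (the stabilizer in $\frakg$ of a point of $\bR P^1$) can then be used to match $A_{u_0}|_{\partial S}$ outside a compact subset of $\partial S$, so that the end conditions \eqref{eq:new-a-3} are preserved. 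One then extends $A_u|_{\partial S}$ into $S$ through a collar neighborhood of $\partial S$, keeping the connection flat, and glues to $A_{u_0}$ on the complement of the collar.

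The main obstacle is preserving flatness throughout the extension and the gluing. This is handled most cleanly by working in a local gauge that trivializes $A_{u_0}$ on the collar, reducing the interior-extension problem to extending a prescribed boundary one-form by a $u$-dependent, compactly supported correction whose exterior derivative is controlled. The argument parallels the one implicit in the affine Corollary \ref{th:affine-a-space}, with $G$ and $\tilde{G}$ replacing $G_\aff$, and no new difficulty arises from passing to the universal cover since the lifting of parallel transport in the definition of $\scrP(S,\Sigma)$ is automatic from the path structure.
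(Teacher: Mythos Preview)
Your approach is the same as the paper's: the paper simply asserts that the analogue of \eqref{eq:fix-lambda} holds and writes \qed, while you spell out the weak fibration verification that the paper leaves implicit. One small slip: the stabilizer in $\frakg$ of a point of $\bR P^1$ is two-dimensional (it is the Lie algebra of $G_\aff$), not one-dimensional as you state; this only gives you more room to make the required choices, so the argument is unaffected.
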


\subsection{Adding an interior puncture}
A disc with $d+1 \geq 1$ boundary punctures and one interior puncture is a surface of the form $S = \bar{S} \setminus \Sigma$, where $\bar{S}$ is (again) a closed disc, and $\Sigma = \{\zeta_0,\dots,\zeta_{d+1}\}$ consists of boundary points $\zeta_0,\dots,\zeta_d$ (numbered as before), together with an interior point $\zeta_{d+1}$. A set of ends for such a surface consists of \eqref{eq:ends} and an additional embedding (with image disjoint from the others)
\begin{equation} \label{eq:ends2}
\left\{
\begin{aligned}
& \epsilon_{d+1}: \bR^{\geq 0} \times S^1 \longrightarrow S, \\
& \textstyle \lim_{s \rightarrow \infty} \epsilon_{d+1}(s,\cdot) = \zeta_{d+1}.
\end{aligned}
\right.
\end{equation}
Fix $(A_j,\tilde\lambda_{j,0},\tilde\lambda_{j,1}) \in \scrP([0,1])$, for $j = 0,\dots,d$, as well as $A_{d+1} \in \scrP_\tau(S^1)$, for some $\tau>2$. We consider pairs $(A,\tilde\lambda)$ on $S$ as in \eqref{eq:tilde-pair}, but where the condition $\epsilon_j^*A = A_j$ is also applied to \eqref{eq:ends2}. We again write $\scrP(S,\Sigma)$ for the space of such pairs (the notation is as before, but we are looking at a different kind of surface).
\begin{figure}
\begin{centering}
\begin{picture}(0,0)%
\includegraphics{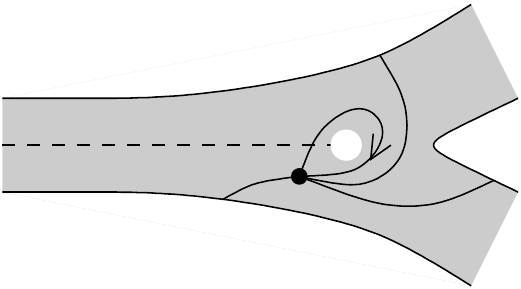}%
\end{picture}%
\setlength{\unitlength}{3947sp}%
\begingroup\makeatletter\ifx\SetFigFont\undefined%
\gdef\SetFigFont#1#2#3#4#5{%
  \reset@font\fontsize{#1}{#2pt}%
  \fontfamily{#3}\fontseries{#4}\fontshape{#5}%
  \selectfont}%
\fi\endgroup%
\begin{picture}(2499,1374)(1339,-1273)
\put(1801,-736){\makebox(0,0)[lb]{\smash{{\SetFigFont{10}{12.0}{\rmdefault}{\mddefault}{\updefault}{\color[rgb]{0,0,0}$\gamma$}%
}}}}
\put(2326,-961){\makebox(0,0)[lb]{\smash{{\SetFigFont{10}{12.0}{\rmdefault}{\mddefault}{\updefault}{\color[rgb]{0,0,0}$c_0$}%
}}}}
\put(3451,-961){\makebox(0,0)[lb]{\smash{{\SetFigFont{10}{12.0}{\rmdefault}{\mddefault}{\updefault}{\color[rgb]{0,0,0}$c_1$}%
}}}}
\put(3301,-361){\makebox(0,0)[lb]{\smash{{\SetFigFont{10}{12.0}{\rmdefault}{\mddefault}{\updefault}{\color[rgb]{0,0,0}$c_2$}%
}}}}
\put(2741,-436){\makebox(0,0)[lb]{\smash{{\SetFigFont{10}{12.0}{\rmdefault}{\mddefault}{\updefault}{\color[rgb]{0,0,0}$c_3$}%
}}}}
\end{picture}%
\caption{\label{fig:cut}The paths from the proof of Proposition \ref{th:puncture}.}
\end{centering}
\end{figure}

\begin{prop} \label{th:puncture}
$\scrP(S,\Sigma)$ is weakly homotopy equivalent to $\bZ$.
\end{prop}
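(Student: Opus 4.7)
The strategy is to cut $S$ along an arc from $\partial S$ to the interior puncture $\zeta_{d+1}$, reducing to the previous proposition on a disc with only boundary punctures, and to extract the $\bZ$ from a discrete parameter in the gluing data. First, I would fix a smooth embedded arc $\gamma\subset S$ from some point $p\in\partial_dS$ to $\zeta_{d+1}$, coinciding near $\zeta_{d+1}$ with $\epsilon_{d+1}(\bR^{\geq0}\times\{t_0\})$ for a chosen $t_0\in S^1$. Cutting along $\gamma$ turns $S$ into a surface $S'$ which is a disc with $d+2$ boundary punctures $\zeta'_0,\ldots,\zeta'_d,\zeta'_{d+1}$; the last replaces the interior puncture and carries a strip-like end inherited from $\epsilon_{d+1}|_{\bR^{\geq0}\times(S^1\setminus\{t_0\})}$.

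Next, I would set up the key weak fibration $\scrP(S,\Sigma)\to f^{-1}((0,2\pi))$ for a function $f$ to be specified. Parallel transport of $A$ along $\gamma$ is a well-defined element $\tilde g_\gamma\in\tilde G$, so parallel transport of $\tilde\lambda(p)$ along either side $\gamma_{\mathrm{left}},\gamma_{\mathrm{right}}$ of the cut produces the same real number $\tilde\lambda^*:=\tilde g_\gamma(\tilde\lambda(p))$. Thus cutting $(A,\tilde\lambda)\in\scrP(S,\Sigma)$ yields $(A',\tilde\lambda')\in\scrP(S',\Sigma')$ with end data at $\zeta'_{d+1}$ of the form $(A''_{d+1},\tilde\lambda^*,\tilde\lambda^*)$, where $A''_{d+1}$ is determined by $A_{d+1}$ and has parallel transport $\tilde g_{\mathrm{loop}}\in\tilde G$ (the natural lift of the holonomy of $A_{d+1}$, hyperbolic with rotation number $1$). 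For this triple to satisfy \eqref{eq:to-the-right} we need $f(\tilde\lambda^*)\in(0,2\pi)$, with
\[
f(\tilde l)\;:=\;\tilde l-\tilde g_{\mathrm{loop}}^{-1}(\tilde l).
\]
The inverse (gluing) construction then realises $\scrP(S,\Sigma)$ as the total space of a weak fibration over $f^{-1}((0,2\pi))$ whose fiber over $\tilde\lambda^*$ is the space $\scrP(S',\Sigma')$ of the previous proposition with the prescribed end data at $\zeta'_{d+1}$; by that proposition the fiber is weakly contractible.

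Finally, $f$ is $2\pi$-periodic (because $\tilde g_{\mathrm{loop}}^{-1}$ commutes with translation by $2\pi$ on $\bR$), and by \eqref{eq:rotation-number-and-shift} applied to $\tilde g_{\mathrm{loop}}^{-1}$ (rotation number $-1$) it equals $2\pi$ precisely at the lifts of the two eigenvectors of $g_{\mathrm{loop}}$, takes values in $(0,2\pi)$ on one of the two complementary arcs per period, and in $(2\pi,4\pi)$ on the other. Consequently $f^{-1}((0,2\pi))\subset\bR$ is a disjoint union of $\bZ$-many open intervals, giving $\scrP(S,\Sigma)\simeq f^{-1}((0,2\pi))\simeq\bZ$. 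The hardest part is the gluing step: one must verify that cutting--gluing genuinely realises the advertised weak fibration and that parallel transport along $\gamma_{\mathrm{left}}$ and along $\gamma_{\mathrm{right}}$ indeed yields the common value $\tilde\lambda^*$ for any $(A,\tilde\lambda)$ in a family, and in doing so pin down the sign conventions that determine whether $\tilde g_{\mathrm{loop}}$ or its inverse appears in the defining relation between $\tilde\lambda^*$ and the end data; the auxiliary paths in Figure \ref{fig:cut} presumably serve to make this bookkeeping explicit.
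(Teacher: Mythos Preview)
Your identification of the invariant $\tilde\lambda^*$ and the analysis of $f^{-1}((0,2\pi))\simeq\bZ$ are correct, and in fact this is exactly how the paper makes the weak equivalence explicit in its Addendum. However, the core step --- that cutting and gluing identify the fibre over $\tilde\lambda^*$ with $\scrP(S',\Sigma')$ --- does not work as stated.

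The problem is that gluing is not defined on all of $\scrP(S',\Sigma')$. For an element $(A',\tilde\lambda')$ to glue back to smooth data on $S$, the one-form $A'$ must agree on the two sides $\gamma_+$ and $\gamma_-$ of the cut (and then $\tilde\lambda'$ will too, by covariant constancy from the common value at $p$). Cutting certainly lands in this ``matching'' subspace, but a generic point of $\scrP(S',\Sigma')$ does not lie in it: after gauge-trivializing $A'$ on the simply connected surface $S'$, the trivialized values $\tilde\lambda'^\dag$ on $\gamma_+$ and $\gamma_-$ are two independent constants, constrained only by the end conditions at $\zeta'_{d+1}$ and at the corners $p_\pm$. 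So what you actually need is contractibility of the matching subspace, and the previous proposition does not give you that. (There is also the subsidiary issue that the cut surface has genuine corners at $p_\pm$, which lie outside the framework set up for the boundary-punctured case; treating them as punctures fails because the $\scrP([0,1])$ condition forbids equal boundary values with trivial transport.)

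The paper avoids cutting altogether. It enlarges to the space $\scrP_\tau(S)$ where the end data are allowed to vary, and uses based gauge transformations on $S$ itself to identify $\scrP_\tau(S)$ with $\scrG(S,\bullet)\times\scrC_\tau(d+1;1)$, where $\scrC_\tau(d+1;1)$ is a finite-dimensional configuration space recording the holonomy $\tilde g^\dag$ around the interior puncture together with the transported boundary values $\tilde\lambda_0^\dag,\dots,\tilde\lambda_d^\dag$. A direct step-by-step analysis shows $\scrC_\tau(d+1;1)$ is contractible, hence so is $\scrP_\tau(S)$. The $\bZ$ then falls out of the long exact sequence of the weak fibration
\[
\scrP(S,\Sigma)\longrightarrow\scrP_\tau(S)\longrightarrow\scrP([0,1])^{d+1}\times\scrP_\tau(S^1),
\]
since the last factor is weakly an $S^1$. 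Your function $f$ and the condition $\tilde\lambda_0^\dag\in f^{-1}((0,2\pi))$ reappear naturally inside the analysis of $\scrC_\tau(d+1;1)$ (this is the content of \eqref{eq:big-small} in the paper), so your intuition for the source of the $\bZ$ is right; what is missing is a workable argument for contractibility of the fibres.
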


\begin{proof}
Let $\scrP_\tau(S)$ be the larger space where the data prescribing the behaviour on the ends may vary, but still keeping $\tau$ fixed. By definition, this sits in a weak fibration
\begin{equation} \label{eq:mixed-fibration}
\scrP(S,\Sigma) \longrightarrow \scrP_\tau(S) \longrightarrow \scrP([0,1])^{d+1} \times \scrP_\tau(S^1).
\end{equation}
Choose a base point $\bullet \in S$. Fix a curve $\gamma$ in the interior of $S$, connecting the ends $\zeta_0$ and $\zeta_{d+1}$ (cutting open the surface along that curve would make it contractible), and avoiding $\bullet$. Choose paths $c_0,\dots,c_d$ from each of the boundary components to $\bullet$, all disjoint from $\gamma$ (this determines their homotopy classes). We also fix a loop $c_{d+1}$ based at $\bullet$, which goes clockwise once around $\zeta_{d+1}$; see Figure \ref{fig:cut}.
Given $(A,\tilde\lambda) \in \scrP_\tau(S)$, move $\tilde{\lambda}_{c_j(0)}$, $j = 0,\dots,d$, by parallel transport along $c_j$ to $\bullet$, and denote the outcome by $\tilde\lambda_j^\dag \in \bR$. Additionally, let $\tilde{g}^\dag \in \tilde{G}$ be the holonomy around $c_{d+1}$, and $g^\dag$ its image in $G$. These satisfy:
\begin{equation} \label{eq:tilde-holonomy}
\left\{
\begin{aligned}
& \tilde{g}^\dag \text{ is hyperbolic with rotation number $1$, and $|\mathrm{tr}(g^\dag)| = \tau$;} \\
& \tilde{\lambda}_0^\dag - (\tilde{g}^\dag)^{-1}(\tilde{\lambda}_d^\dag) \in (0,2\pi); \\
& \tilde{\lambda}_j^\dag - \tilde{\lambda}_{j+1}^\dag \in (0,2\pi) \quad \text{for $j = 0,\dots,d-1$}.
\end{aligned}
\right.
\end{equation}
Let $\scrC_\tau(d+1;1)$ be the space of all solutions of \eqref{eq:tilde-holonomy}. Let $\scrG(S,\bullet)$ be the group of those $\tilde\Phi \in \smooth(S,\tilde{G})$ which are trivial at $\bullet$, and independent of $s$ on each end. This acts freely on $\scrP_\tau(S)$, and the process defined above yields a $\scrG(S,\bullet)$-invariant map $\scrP_\tau(S) \rightarrow \scrC_\tau(d+1;1)$, which is easily seen to be onto. Now, take any two points of $\scrP_\tau(S)$ lying in the same fibre of that map. Because the monodromies are the same, the two connections can be related by a (unique) gauge transformation which lies in $\scrG(S,\bullet)$. It then follows that the gauge transformation also relates the boundary conditions $\tilde\lambda$. The consequence is that we have a weak fibration
\begin{equation} \label{eq:based-tau}
\scrG(S,\bullet) \longrightarrow \scrP_\tau(S) \longrightarrow \scrC_\tau(d+1;1).
\end{equation}

Our main task is to analyze $\scrC_\tau(d+1;1)$. Since $(\tilde{g}^\dag)^{-1}$ has rotation number $-1$, it moves every point on the real line to the left, compare \eqref{eq:rotation-number-and-shift}; which yields the implication
\begin{equation}
\tilde{\lambda}_0^\dag - (\tilde{g}^\dag)^{-1}(\tilde{\lambda}_d^\dag) < 2\pi \;\; \Longrightarrow \;\;
\tilde{\lambda}_0^\dag - \tilde{\lambda}_d^\dag < 2\pi. 
\end{equation}
Hence, only part of the last line in \eqref{eq:tilde-holonomy} is necessary, namely that $\tilde{\lambda}_j^\dag > \tilde{\lambda}_{j+1}^\dag$. We also have
\begin{equation} \label{eq:big-small}
\tilde{\lambda}_0^\dag - (\tilde{g}^\dag)^{-1}(\tilde{\lambda}_d^\dag) < 2\pi \;\; \text{ and (for $d>0$) } \;\;
\tilde{\lambda}_0^\dag > \tilde{\lambda}_d^\dag \;\; \Longrightarrow \;\;
\tilde{\lambda}_0^\dag - (\tilde{g}^\dag)^{-1}(\tilde{\lambda}_0^\dag) < 2\pi. 
\end{equation}
In terms of \eqref{eq:rotation-number-and-shift}, this says that the image of $\tilde\lambda_0^\dag$ in $\bR P^1$ belongs to the interval $(l_{\mathit{big}}, l_{\mathit{small}})$ bounded by the eigenvectors of $\tilde{g}^\dag$. Taking this into account, one can rewrite \eqref{eq:tilde-holonomy} as:
\begin{equation} \label{eq:tilde-holonomy-2}
\left\{
\begin{aligned}
& \tilde{g}^\dag \text{ is hyperbolic, with rotation number $1$;} \\
& \tilde{\lambda}_0^\dag \text{ is a preimage of a point in $(l_{\mathit{big}}, l_{\mathit{small}})$;} \\
& \tilde{\lambda}_0^\dag - (\tilde{g}^\dag)^{-1}(\tilde{\lambda}_d^\dag) \in (0,2\pi); \\
& \tilde{\lambda}_0^\dag > \tilde{\lambda}_1^\dag > \cdots > \tilde{\lambda}_d^\dag.
\end{aligned}
\right.
\end{equation}
From this point of view, the construction of a point in $\scrC_\tau(d+1;1)$ proceeds in the following steps:
\begin{align}
&
\parbox{36em}{Choose $\tilde{\lambda}_0^\dag \in \bR$.} 
\\ &
\parbox{36em}{Next, take $l_{\mathit{small}} \neq l_{\mathit{big}}$ in $\bR P^1$, so that $\tilde{\lambda}_0^\dag$ lies in the preimage of $(l_{\mathit{big}}, l_{\mathit{small}})$.}
\\ &
\parbox[t]{36em}{Take the unique $\tilde{g}^\dag$ in the specific hyperbolic conjugacy class, whose eigenvectors are the given $l_{\mathit{small}}, \, l_{\mathit{big}}$. This will automatically satisfy $\tilde\lambda_0^\dag - (\tilde{g}^\dag)^{-1}(\tilde{\lambda}_0^\dag) \in (0,2\pi)$.}
\\ &
\parbox[t]{36em}{If $d>0$, fix $\tilde{\lambda}_d^\dag < \tilde{\lambda}_0^\dag$ satisfying the third line of \eqref{eq:tilde-holonomy-2}. This is always possible, since any $\tilde{\lambda}_d^\dag$ which is sufficiently close to $\tilde{\lambda}_0^\dag$ will have that property.} 
\\ &
\parbox[t]{36em}{If $d>1$, choose $\tilde{\lambda}_1^\dag > \cdots > \tilde{\lambda}_{d-1}^\dag$ in the interval $(\tilde{\lambda}_0^\dag, \tilde{\lambda}_d^\dag)$.}
\end{align}
Since all choices belong to contractible spaces, $\scrC_\tau(d+1;1)$ is contractible. This, together with the weak contractibility of $\scrG(S,\bullet)$, and the fact that the base of \eqref{eq:mixed-fibration} is weakly homotopy equivalent to a circle, implies the desired result.
\end{proof}

\begin{addendum} \label{th:explicit-punctured}
In the proof of Proposition \ref{th:puncture}, let's take $\bullet = \epsilon_{d+1}(s,0)$ for some $s$, and use the loop $c_{d+1}(t) = \epsilon_{d+1}(s,t)$. If we start with $(A,\tilde\lambda) \in \scrP(S,\Sigma)$, the resulting $\tilde{g}$ will always be the holonomy of $A_{d+1}$, so the eigenvectors $l_{\mathit{small}}$ and $l_{\mathit{big}}$ are fixed throughout $\scrP(S,\Sigma)$. Fix an identification between the set of connected components of the preimage of $(l_{\mathit{big}}, l_{\mathit{small}})$ and $\bZ$, compatible with the covering action. By \eqref{eq:big-small}, $\tilde{\lambda}_0^\dag$ lies in such a connected component, hence giving rise to a map
\begin{equation} \label{eq:pick-sheet}
\scrP(S,\Sigma) \longrightarrow \bZ.
\end{equation}
Inspection of the proof of Proposition \ref{th:puncture} shows that this is a weak homotopy equivalence (and independent of all choices up to a constant). In a nutshell, the argument is as follows: consider the space $\tilde{\scrI}$ of open intervals of length $<2\pi$ in $\bR$, and the corresponding space $\scrI$ of intervals in $\bR P^1$. Then, \eqref{eq:pick-sheet} sits in a commutative diagram
\begin{equation}
\xymatrix{
\ar[d] \scrP(S,\Sigma) \ar[r] & \ar[r] \ar[d] \scrP_\tau(S) & \scrP([0,1])^{d+1} \times \scrP_\tau(S^1) \ar[d] \\
\bZ \ar[r] & \tilde{\scrI} \ar[r] & \scrI
}
\end{equation}
The rows are weak fibrations, and the middle and right vertical arrows are weak homotopy equivalences; hence so is that on the left.
\end{addendum}

One can use Addendum \ref{th:explicit-punctured} to derive the following consequence. Let's introduce a parameter $\theta \in S^1$ which rotates the end $\epsilon_{d+1}$. Correspondingly, one has a parametrized space $\scrP_{\mathit{rotate}}(S,\Sigma)$, which sits in a weak fibration
\begin{equation} \label{eq:rotate-fibration}
\scrP(S,\Sigma) \longrightarrow \scrP_{\mathit{rotate}}(S,\Sigma) \longrightarrow S^1.
\end{equation}

\begin{cor}
Under the weak homotopy equivalence \eqref{eq:pick-sheet}, the fibration \eqref{eq:rotate-fibration} has holonomy around $S^1$ which shifts the sheets by $1$. Hence, $\scrP_{\mathit{rotate}}(S,\Sigma)$ is weakly contractible. \qed
\end{cor}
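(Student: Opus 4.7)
The plan is to reduce to a monodromy calculation via the long exact sequence of homotopy groups for the weak fibration \eqref{eq:rotate-fibration}. Combining $\scrP(S,\Sigma) \htp \bZ$ from Addendum \ref{th:explicit-punctured} with $\pi_i(S^1) = 0$ for $i \geq 2$, the long exact sequence shows that $\scrP_{\mathit{rotate}}(S,\Sigma)$ is weakly contractible provided the connecting map $\pi_1(S^1) = \bZ \to \pi_0(\scrP(S,\Sigma)) = \bZ$ is an isomorphism; equivalently, provided the monodromy of \eqref{eq:rotate-fibration} acts on $\bZ$ by a generator.

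To compute the monodromy, I would lift the generating loop $\theta \in [0,2\pi]$ of $\pi_1(S^1)$ to a continuous path $(A_\theta, \tilde\lambda_\theta) \in \scrP_{\mathit{rotate}}(S,\Sigma)$. A convenient choice is to keep $\tilde\lambda_\theta = \tilde\lambda_0$ fixed on $\partial S$, and take $A_\theta = (\tilde\Phi_\theta)_* A_0$, where $\tilde\Phi_\theta \in \smooth(S,\tilde G)$ is supported near $\zeta_{d+1}$ and, for $s$ large on the end, equals the parallel transport of $A_{d+1}$ from $0$ to $\theta$ lifted to $\tilde G$. By construction $\epsilon_{d+1,\theta}^* A_\theta = A_{d+1}$ for all $\theta$, and at $\theta = 2\pi$ the end is again $\epsilon_{d+1}$, so both $(A_0,\tilde\lambda_0)$ and $(A_{2\pi},\tilde\lambda_{2\pi})$ lie in $\scrP(S,\Sigma)$.

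Now I would follow the recipe of Addendum \ref{th:explicit-punctured} with $\theta$-dependent data: base point $\bullet_\theta = \epsilon_{d+1,\theta}(s,0)$ and loop $c_{d+1,\theta}$, letting the auxiliary paths $c_0,\dots,c_d$ also vary continuously in $\theta$ while staying disjoint from $c_{d+1,\theta}$. Tracking $\tilde\lambda_{0,\theta}^\dag \in \bR$ continuously, the curve $\theta \mapsto \bullet_\theta$ traces a loop in $S \setminus \{\zeta_{d+1}\}$ freely homotopic to $c_{d+1}$, so $\tilde\lambda_{0,2\pi}^\dag$ differs from $\tilde\lambda_{0,0}^\dag$ by the action of $\tilde g^\dag$ (or its inverse, depending on orientation). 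Since $\tilde g^\dag$ is hyperbolic with rotation number $1$, equation \eqref{eq:rotation-number-and-shift} gives that it shifts the lifts of the eigendirections $l_\mathit{small},l_\mathit{big}$ by exactly $+2\pi$, hence maps each connected component of the preimage of $(l_\mathit{big},l_\mathit{small}) \subset \bR P^1$ in $\bR$ to the neighbouring one. Under the identification with $\bZ$ fixed in Addendum \ref{th:explicit-punctured}, this is precisely the shift by $\pm 1$.

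The main subtlety lies in the middle step: one must verify that the curve of base points $\bullet_\theta$ traces a loop homotopic to $c_{d+1}$ with the correct orientation, so that the discrepancy $\tilde\lambda_{0,2\pi}^\dag - \tilde\lambda_{0,0}^\dag$ is really the action of $\tilde g^\dag$ (up to sign). This is essentially a matter of bookkeeping the conventions in the proof of Proposition \ref{th:puncture}; in any event only the parity of the resulting shift matters for the weak contractibility conclusion.
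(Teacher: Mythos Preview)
Your overall strategy is correct and matches what the paper leaves implicit (the statement carries only a \qed, so the intended argument is precisely the one you outline: the long exact sequence of \eqref{eq:rotate-fibration} reduces weak contractibility to the monodromy being a shift by $\pm 1$, and that shift is forced by the rotation number~$1$ of $\tilde g^\dag$ via \eqref{eq:rotation-number-and-shift}).

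There is, however, a technical slip in your lift. Taking $\tilde\Phi_\theta$ on the end to be the \emph{constant} $\tilde\Psi_\theta$ (parallel transport from $0$ to $\theta$) does not achieve $\epsilon_{d+1,\theta}^*A_\theta = A_{d+1}$: a constant gauge transformation sends $a_t\,dt$ to $\tilde\Psi_\theta\, a_t\, \tilde\Psi_\theta^{-1}\,dt$, and there is no reason for $\tilde\Psi_\theta\, a_{t+\theta}\, \tilde\Psi_\theta^{-1} = a_t$ unless $A_{d+1}$ is autonomous. The correct gauge transformation on the end is the $t$-dependent one $\tilde\Phi_\theta(t) = \tilde\Psi_{t-\theta}\tilde\Psi_t^{-1}$; one checks directly from $\Psi_t' = a_t\Psi_t$ that $(\tilde\Phi_\theta)_*(a_t\,dt) = a_{t-\theta}\,dt$, which is exactly what is needed. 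This $\tilde\Phi_\theta$ extends over $S$ (identity on $\partial S$) continuously in $\theta$ because $\tilde G$ is contractible. At $\theta = 1$ one has $\tilde\Phi_1(t) = \tilde\Psi_t\,\tilde g^{-1}\,\tilde\Psi_t^{-1}$, so at the base point $\bullet = \epsilon_{d+1}(s,0)$ (with $s$ large) $\tilde\Phi_1(\bullet) = \tilde g^{-1}$, while $\tilde\Phi_1 = \Id$ on $\partial S$. Parallel transport for $A_1 = (\tilde\Phi_1)_*A_0$ along $c_0$ therefore differs from that for $A_0$ by post-composition with $\tilde g^{-1}$, giving $\tilde\lambda_0^{\dag,1} = \tilde g^{-1}(\tilde\lambda_0^{\dag,0})$, and hence a shift of the $\bZ$-label by $\pm 1$ as you claim. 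With this correction, your argument goes through.
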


As usual, one can also consider the spaces $\scrA(S,\Sigma)$ and $\scrA_{\mathit{rotate}}(S,\Sigma)$ where $\tilde{\lambda}$ is kept fixed, and get corresponding results:

\begin{corollary} \label{th:last-contractible}
$\scrA(S,\Sigma)$ is weakly homotopy equivalent to $\bZ$, with an explicit homotopy equivalence given by \eqref{eq:pick-sheet}. Moreover, $\scrA_{\mathit{rotate}}(S,\Sigma)$ is weakly contractible. \qed
\end{corollary}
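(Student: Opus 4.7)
The plan is to mimic, in the new hyperbolic-isometry setting, the affine argument that produced Corollary \ref{th:affine-a-space}, then combine it with Addendum \ref{th:explicit-punctured} and the corollary immediately preceding the statement.

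First I would write down the analogue of \eqref{eq:fix-lambda}. Having fixed the end data $(A_j,\tilde\lambda_{j,0},\tilde\lambda_{j,1})$ and $A_{d+1}$, the remaining freedom in $\tilde\lambda \in \smooth(\partial S,\bR)$ is an affine space modeled on the topological vector space of smooth functions on $\partial S$ that vanish on the images of the strip-like ends $\epsilon_0,\dots,\epsilon_d$. That space is weakly contractible. The forgetful map $(A,\tilde\lambda)\mapsto \tilde\lambda$ gives a weak fibration
\begin{equation}
\scrA(S,\Sigma) \longrightarrow \scrP(S,\Sigma) \longrightarrow \smooth_c(\partial S,\bR),
\end{equation}
whose fiber is exactly $\scrA(S,\Sigma)$ by definition. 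The essential point is that given a fixed $\tilde\lambda$, any flat $A$ satisfying the end conditions and preserving $\tilde\lambda$ under boundary parallel transport defines a point of $\scrA(S,\Sigma)$, and conversely; so the fibers are as claimed.

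From the long exact sequence of this fibration and the weak contractibility of the base, I get $\scrA(S,\Sigma) \simeq \scrP(S,\Sigma)$. By Addendum \ref{th:explicit-punctured}, the latter is weakly homotopy equivalent to $\bZ$ via the map \eqref{eq:pick-sheet}. Since the restriction of \eqref{eq:pick-sheet} to any fiber of the forgetful map is just the same map \eqref{eq:pick-sheet} (it depends only on the holonomy of $A$ around $c_{d+1}$ and on the parallel transport of $\tilde\lambda_0$ to the basepoint, both of which are intrinsic to $(A,\tilde\lambda)$), the explicit homotopy equivalence survives the restriction. This proves the first statement.

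For the second statement, I would run the same argument parametrically. The rotated analogue $\scrA_{\mathit{rotate}}(S,\Sigma)$ fits into a weak fibration
\begin{equation}
\scrA_{\mathit{rotate}}(S,\Sigma) \longrightarrow \scrP_{\mathit{rotate}}(S,\Sigma) \longrightarrow \smooth_c(\partial S,\bR),
\end{equation}
obtained as before by forgetting $A$. The base is weakly contractible and, by the corollary immediately preceding Corollary \ref{th:last-contractible}, so is the total space, so the fiber $\scrA_{\mathit{rotate}}(S,\Sigma)$ is weakly contractible as well.

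I do not anticipate a real obstacle: the only point that needs a little care is to verify that \eqref{eq:pick-sheet} is well-defined on $\scrA(S,\Sigma)$ and coincides with its restriction from $\scrP(S,\Sigma)$, so that the weak equivalence is genuinely implemented by the formula claimed in the statement rather than being merely abstract.
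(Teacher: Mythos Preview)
Your argument is correct and is exactly the approach the paper has in mind: the phrase ``as usual'' preceding the corollary refers to the weak fibration $\scrA(S,\Sigma) \to \scrP(S,\Sigma) \to \smooth_c(\partial S,\bR)$ of type \eqref{eq:fix-lambda}, combined with Proposition~\ref{th:puncture}/Addendum~\ref{th:explicit-punctured} and the preceding corollary on $\scrP_{\mathit{rotate}}(S,\Sigma)$. Your remark that \eqref{eq:pick-sheet} restricts unchanged to the fibre is the right way to see that the explicit equivalence persists.
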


\subsection{A bit more hyperbolic geometry}
From this point onwards, we find it convenient to switch to the disc model for the hyperbolic plane:
\begin{equation}
\begin{aligned}
& B = \{|w| < 1\} \subset \bC, \\
& \bar{B} = \{|w| \leq 1\}, \\
& \partial_\infty B = \partial \bar{B},
\end{aligned}
\end{equation}
and to replace $\mathit{PSL}_2(\bR)$ by the isomorphic group (using the same notation, which hopefully does not cause too much confusion)
\begin{align}
& G = \mathit{PU}(1,1) = \left\{ \left(\begin{smallmatrix} a & b \\ \bar{b} & \bar{a} \end{smallmatrix}\right)\; : \;
a,b \in \bC, \; |a|^2 - |b|^2 = 1 \right\}/\pm\!\Id, \\
&
\frakg = \mathfrak{su}(1,1) = \left\{ \left(\begin{smallmatrix} i\alpha & \beta \\ 
\bar{\beta}  & -i\alpha \end{smallmatrix}\right) \;:\;
\alpha \in \bR, \;\beta \in \bC \right\}.
\end{align}
$G$ acts on $B$ by holomorphic automorphisms
\begin{align}
& \rho_g(w) = \frac{aw+b}{\bar{b}w + \bar{a}}, \\
\label{eq:x-gamma-2}
& X_\gamma =  (-\beta w^2 + 2i\alpha w + \bar{\beta}) \, \partial_w. 
\end{align}
That action preserves the hyperbolic symplectic form (scaled to have curvature $-4$, for simplicity)
\begin{equation}
\omega_B = \frac{d\mathrm{re}(w) \wedge d\mathrm{im}(w)}{(1-|w|^2)^2}.
\end{equation}
There is no longer an invariant primitive. The map \eqref{eq:x-gamma-2} can be lifted to the level of functions, compatibly with Poisson brackets:
\begin{equation} \label{eq:h-gamma}
\gamma \longmapsto H_\gamma = \frac{1}{1-|w|^2} \left( \half(1+|w|^2)\alpha - \mathrm{im}(\beta w) \right) 
= \frac{\alpha - \mathrm{im}(\beta w)}{1-|w|^2} - \half \alpha.
\end{equation}
Taking into account the scaling of the metric, and a choice of identification between disc and half-plane model, these formulae correspond to those from Section \ref{sec:affine}, when restricted to the subgroup fixing a point of $\partial_\infty B$. The $G$-action extends to $\bar{B}$, and its restriction to the unit circle $\partial_\infty B = \bR/2\pi\bZ$ replaces our previous use of the action of $\mathit{PSL}_2(\bR)$ on $\bR P^1$. One can rewrite the extension of \eqref{eq:x-gamma-2} as
\begin{equation}
\label{eq:x-gamma}
\bar{X}_\gamma\,|\,\partial_\infty W
= 2\big(\alpha - \mathrm{im}(\beta w)\big) iw \partial_w.
\end{equation}
Comparing this with \eqref{eq:h-gamma} yields the following:

\begin{lemma} 
If $X_\gamma$ points in positive (negative) direction at $w_\infty \in \partial_\infty B$, $H_\gamma(w) \rightarrow +\infty$ (respectively $-\infty$) as $w \rightarrow w_\infty$. If $X_\gamma$ vanishes at $w_\infty$, 
\begin{equation} \label{eq:h-gamma-bound}
| H_\gamma(w) | \lesssim \frac{|w - w_\infty|}{1-|w|} \quad \text{for $w \in B$ close to $w_\infty$.}
\end{equation} \qed
\end{lemma}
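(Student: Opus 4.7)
The strategy is to read off both assertions directly from the explicit formulas \eqref{eq:h-gamma} and \eqref{eq:x-gamma}, using only the elementary observation that the function
\[
f(w) = \alpha - \mathrm{im}(\beta w)
\]
is affine-linear in $\mathrm{re}(w),\mathrm{im}(w)$ (in particular smooth on $\bar B$), and appears as the numerator in $H_\gamma$ as well as the coefficient controlling $\bar X_\gamma$ along $\partial_\infty B$.

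First I would rewrite \eqref{eq:h-gamma} as $H_\gamma(w) = f(w)/(1-|w|^2) - \tfrac12 \alpha$ and observe, from \eqref{eq:x-gamma}, that the boundary vector field is $\bar X_\gamma|_{\partial_\infty B} = 2f(w)\, iw\partial_w$. Since $iw\partial_w$ is the positive (counterclockwise) tangent to $\partial_\infty B$ at every point of the unit circle, the hypothesis that $X_\gamma$ points in positive (respectively negative) direction at $w_\infty$ is equivalent to $f(w_\infty) > 0$ (respectively $< 0$), while the hypothesis that $X_\gamma$ vanishes at $w_\infty$ is equivalent to $f(w_\infty) = 0$.

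For the first assertion, if $f(w_\infty) > 0$ then by continuity $f(w) \geq \tfrac12 f(w_\infty)$ for $w$ close to $w_\infty$ in $\bar B$, while $1 - |w|^2 \to 0^+$. Therefore $f(w)/(1-|w|^2) \to +\infty$, and the bounded term $-\tfrac12\alpha$ does not affect the limit; this gives $H_\gamma(w) \to +\infty$. The negative direction case is identical with signs reversed.

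For the second assertion, when $f(w_\infty) = 0$ the smoothness (Lipschitz continuity) of $f$ gives a constant $C$ with $|f(w)| \leq C|w - w_\infty|$ near $w_\infty$. Combining this with the factorization $1-|w|^2 = (1-|w|)(1+|w|) \geq 1-|w|$ yields
\[
\frac{|f(w)|}{1-|w|^2} \;\leq\; \frac{C\,|w-w_\infty|}{1-|w|}.
\]
To absorb the constant term $\tfrac12\alpha$, I would use the elementary inequality $|w - w_\infty| \geq |w_\infty| - |w| = 1-|w|$, valid because $|w_\infty|=1$. This shows $|w-w_\infty|/(1-|w|) \geq 1$ on $B$, so any bounded quantity is automatically dominated by $|w-w_\infty|/(1-|w|)$ up to a constant. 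Adding the two estimates yields \eqref{eq:h-gamma-bound}. The only step that requires any care is ensuring the $-\tfrac12\alpha$ term is controlled by the right-hand side, which is precisely what the observation $|w-w_\infty|/(1-|w|) \geq 1$ achieves.
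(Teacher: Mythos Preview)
Your proof is correct and follows exactly the route the paper intends: the lemma is stated with a \qed\ and preceded by the remark that it follows by comparing \eqref{eq:h-gamma} with \eqref{eq:x-gamma}, which is precisely what you do. Your write-up simply supplies the details the paper leaves implicit, including the careful handling of the constant term $-\tfrac12\alpha$ via the inequality $|w-w_\infty|\geq 1-|w|$.
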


We will also need the notion of geodesic germ (or more properly, germ at infinity of a geodesic). By this we mean a half-infinite geodesic ray $\delta \subset B$, with the understanding that two such are considered equivalent if they differ only by a bounded piece. Any geodesic germ has a unique point at infinity $\lambda = \partial_\infty \delta \in \partial_\infty B$. The behaviour of the functions \eqref{eq:h-gamma} along such germs is as follows:

\begin{lemma} \label{th:tangent-germ}
If $X_\gamma$ is tangent to $\delta$, $H_\gamma|\delta = 0$. More generally, if $\bar{X}_\gamma$ vanishes at $\partial_\infty \delta$, $H_\gamma|\delta$ is bounded.
\end{lemma}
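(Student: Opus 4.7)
Both parts come down to combining $G$-equivariance of the Hamiltonian map $\gamma\mapsto H_\gamma$ with a direct calculation in one model case, plus, for the second part, the elementary fact that hyperbolic geodesics meet $\partial_\infty B$ orthogonally. The $G$-equivariance is automatic from the Lie algebra homomorphism $H_{[\gamma_1,\gamma_2]}=\{H_{\gamma_1},H_{\gamma_2}\}$ already proved in the text: the map $\mu:B\to\frakg^*$ defined by $\mu(w)(\gamma)=H_\gamma(w)$ is a moment map for the $G$-action, and since $G$ is connected it is $\mathrm{Ad}^*$-equivariant, i.e.\ $H_\gamma(\rho_g w)=H_{\mathrm{Ad}(g^{-1})\gamma}(w)$.

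For the first statement, I would assume $\gamma\neq 0$ (otherwise the claim is trivial). Since $X_\gamma$ is Killing and tangent to $\delta$, it preserves $\delta$ and hence its geodesic extension $\tilde\delta$; in particular the two endpoints of $\tilde\delta$ on $\partial_\infty B$ are fixed, so $\gamma$ is hyperbolic. Choose $g\in G$ taking the real diameter $(-1,1)\subset B$ to $\tilde\delta$, and set $\gamma':=\mathrm{Ad}(g^{-1})\gamma$, so that $X_{\gamma'}$ is tangent to $(-1,1)$. Requiring the coefficient $-\beta'w^2+2i\alpha'w+\bar{\beta'}$ in \eqref{eq:x-gamma-2} to be real for all $w\in(-1,1)$ forces $\alpha'=0$ and $\beta'\in\bR$; then \eqref{eq:h-gamma} specialises to $H_{\gamma'}(w)=-\beta'\,\mathrm{im}(w)/(1-|w|^2)$, which indeed vanishes on the real axis. $G$-equivariance of $\mu$ then propagates this back to $H_\gamma|\delta=0$.

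For the second statement, I would apply the bound \eqref{eq:h-gamma-bound}: it suffices to show that $|w-w_\infty|/(1-|w|)$ is bounded on the germ $\delta$, where $w_\infty=\partial_\infty\delta$ (the complementary compact portion of $\delta$ is handled by continuity of $H_\gamma$). Parametrise nearby points by $w=w_\infty-\rho e^{i\psi}$, with $\psi$ measured from the inward radial direction at $w_\infty$. A direct calculation yields $1-|w|^2=2\rho\cos\psi-\rho^2$, hence $1-|w|\geq\rho\cos\psi-\rho^2/2$. Because a Euclidean circle is orthogonal to the unit circle at $w_\infty$ iff its centre lies on the Euclidean tangent line to $\partial B$ at $w_\infty$, every geodesic ending at $w_\infty$ has radial tangent there, i.e.\ $\psi\to 0$ along $\delta$. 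Consequently $|w-w_\infty|/(1-|w|)\to 1$ on $\delta$, which combined with \eqref{eq:h-gamma-bound} gives the required boundedness.

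The main subtlety is pinning down $H_\gamma$ in the first part: since the conclusion there is a definite equality (not just a bound), a purely soft invariance argument cannot suffice — one must compute $H_{\gamma'}$ explicitly in the model case, and $G$-equivariance of $\mu$ is what legitimises the reduction to that case. Everything else is elementary: the second part is a one-line Euclidean estimate, and an alternative route avoiding $G$-equivariance entirely would involve parametrising the Cartan subalgebra stabilising $\tilde\delta$ directly in $(\alpha,\beta)$-coordinates, which is possible but opaque.
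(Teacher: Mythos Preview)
Your proof is correct and follows essentially the same route as the paper: reduce the first statement to the real diameter by $G$-equivariance and compute directly from \eqref{eq:x-gamma-2}, \eqref{eq:h-gamma}; for the second, invoke \eqref{eq:h-gamma-bound}. The paper is terser---it simply says ``everything is invariant under the $G$-action'' for the reduction and cites \eqref{eq:h-gamma-bound} without further comment for the bound---whereas you spell out the moment-map equivariance and the orthogonality calculation showing $|w-w_\infty|/(1-|w|)\to 1$ along the geodesic; these are exactly the details the paper leaves to the reader.
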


\begin{proof}
Consider the first case. Since everything is invariant under the $G$-action, it suffices to consider the case when $\delta$ is part of the real axis. The tangency assumption implies that $\alpha = 0$, $\beta \in \bR$ in \eqref{eq:x-gamma}, and the desired property can then be read off from \eqref{eq:h-gamma}. The second part follows from \eqref{eq:h-gamma-bound} (a more specific analysis would show that $H_\gamma|\delta \rightarrow 0$ as we approach $\partial_\infty \delta$).
\end{proof}

\subsection{Flat connections and geodesic germs\label{subsec:conn-and-germs}}
Let $S$ be a connected Riemann surface with boundary, equipped with a pair $(A,\delta)$, where
\begin{equation}
A \in \Omega^1(S, \frakg), \quad \delta = (\delta_z)_{z \in \partial S} \text{ is a family of geodesic germs.}
\end{equation}
There is an associated function $\lambda  = \partial_\infty\delta \in \smooth(\partial S,\partial_\infty B)$. We require that $(A,\lambda)$ should satisfy \eqref{eq:new-a-1} and \eqref{eq:new-a-2}. Note that parallel transport is not required to preserve $\delta$.

As in the analogous situation of Section \ref{subsec:package}, the connection $d-A$ induces one-forms $X_A$ and $H_A$, with values in Hamiltonian vector fields and functions; as well as a symplectic form $\omega_A$ and complex structure $J_A$ on the trivial fibre bundle 
\begin{equation} \label{eq:b-trivial}
S \times B \longrightarrow S.
\end{equation}
$X_A$ as well as $J_A$ extend to $S \times \bar{B}$. Finally, $\delta$ determines a germ of a submanifold $\Delta \subset \partial S \times B$, with smooth closure $\bar\Delta \subset \partial S \times \bar{B}$. While $\Delta$ is totally real with respect to $J_A$, it is not necessarily isotropic for $\omega_A$. Instead, there is a preferred one-form vanishing in fibre direction,
\begin{equation} \label{eq:dbeta}
\beta_A \in \Omega^1(\Delta), \quad d\beta_A = \omega_A|\Delta.
\end{equation}
Equivalently, one can view $\beta_A$ as a one-form on $\partial S$ with values in the bundle of functions on $\delta_z$. To define it, we choose $\alpha \in \Omega^1(\partial S,\frakg)$ such that the associated parallel transport maps map the $\delta_z$ to each other. This is not unique, but by Lemma \ref{th:tangent-germ}, the restriction of $H_\alpha$ to $\Delta$ is independent of the choice. One then sets
\begin{equation} \label{eq:dbeta-formula}
\beta_A =(H_{\alpha} - H_{A|\partial S} ) | \Delta.
\end{equation}
To see that \eqref{eq:dbeta-formula} is a primitive for $\omega_A$, one can argue as follows. The entire situation is invariant under gauge transformations, hence reduces to the case where $\delta$ is locally constant, where one can set $\alpha = 0$. In local coordinates $z = s+it$ on $S$ for which the boundary is $t = 0$, this means that $\beta_A = -H_A(\partial_s) \mathit{ds}$. The exterior derivative of this agrees with the restriction of \eqref{eq:omega-a} to the product of $\{t = 0\}$ and a geodesic germ.

\begin{lemma} \label{th:beta-bounded}
$\beta_A$ is locally bounded on $\partial S$; by this we mean that, if $\xi$ is a compactly supported tangent vector field on $\partial S$, then $\beta_A(\xi)$ is a bounded function.
\end{lemma}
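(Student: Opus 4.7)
The plan is to reduce the claim to a trivial gauge, where the desired boundedness becomes a direct consequence of the second part of Lemma \ref{th:tangent-germ}. Since the statement is local on $\partial S$ and $\xi$ has compact support, it suffices to bound $\beta_A(\xi)|\delta_z$ uniformly for $z$ in a small neighborhood $U$ of an arbitrary $z_0 \in \partial S$, and then to assemble a finite cover of $\mathrm{supp}(\xi)$.

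First, I would shrink $U$ and choose a gauge transformation $\Phi \in \smooth(U, G)$ taking $(A,\lambda)|U$ to $(A^\dag,\lambda^\dag) = (0,\lambda_0)$, where $\lambda_0 \in \partial_\infty B$ is constant; this is possible because $\lambda$ is covariantly constant, so once $A$ is trivialized it is actually (locally) constant. The transformed geodesic germs $\delta^\dag_z = \Phi(z)^{-1}(\delta_z)$ then all end at $\lambda_0$ and depend smoothly on $z$.

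Next, I would analyze the admissibility condition on $\alpha^\dag$ in this gauge: for each tangent vector $\xi$ on $\partial S \cap U$, the vector field $X_{\alpha^\dag(\xi)}$ must infinitesimally carry $\delta^\dag_z$ to $\delta^\dag_{z+\varepsilon\xi}$. Because both germs end at $\lambda_0$, this forces $\bar X_{\alpha^\dag(\xi)}(\lambda_0) = 0$; equivalently, $\alpha^\dag(\xi)$ lies in the two-dimensional Lie subalgebra $\mathfrak{p}_{\lambda_0} \subset \frakg$ stabilizing $\lambda_0$. The parabolic direction in $\mathfrak{p}_{\lambda_0}$ surjects onto the one-dimensional tangent space to the family of germs ending at $\lambda_0$, so a smooth choice of $\alpha^\dag(\xi) \in \mathfrak{p}_{\lambda_0}$ solving the matching condition exists on $U$.

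With $A^\dag = 0$, formula \eqref{eq:dbeta-formula} reduces to $\beta_{A^\dag}(\xi) = H_{\alpha^\dag(\xi)}|\Delta^\dag$, and the second part of Lemma \ref{th:tangent-germ} applied pointwise yields $H_{\alpha^\dag(\xi)}|\delta^\dag_z$ bounded. Gauge invariance of the construction then identifies this with $\beta_A(\xi)$, so $\beta_A(\xi)|\delta_z$ is bounded on $U$. I expect the main obstacle to be the uniformity in $z$: the bound \eqref{eq:h-gamma-bound} depends on $\gamma$ and on how the geodesic approaches its endpoint, so one has to observe that as $z$ varies in $\bar U \cap \mathrm{supp}(\xi)$ both the choices $\alpha^\dag(\xi)$ and the germs $\delta^\dag_z$ vary smoothly on a compact set, which lets the implicit constant be taken uniform. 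A finite cover of $\mathrm{supp}(\xi)$ then completes the argument.
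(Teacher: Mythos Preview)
Your argument is correct and rests on the same mechanism as the paper's proof: exploit gauge invariance of $\beta_A$ to reduce to a situation where the second part of Lemma \ref{th:tangent-germ} applies directly. The only difference is which gauge you choose. The paper gauges so that the family $\delta$ becomes locally constant; then one may take $\alpha = 0$, so $\beta_A = -H_{A|\partial S}|\Delta$, and since $A|\partial S$ preserves the (now constant) endpoint $\lambda_0$, Lemma \ref{th:tangent-germ} gives the bound immediately. You instead gauge $A$ to zero, leaving $\delta^\dag$ variable but with common endpoint $\lambda_0$, and then bound $H_{\alpha^\dag(\xi)}|\delta^\dag_z$. Both choices feed the same Lie-algebra element (one lying in the stabilizer $\mathfrak{p}_{\lambda_0}$) into Lemma \ref{th:tangent-germ}; they simply trade which of the two terms in \eqref{eq:dbeta-formula} survives. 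The paper's gauge is marginally cleaner because it sidesteps the uniformity-in-$z$ discussion you (correctly) flag at the end: with $\delta$ constant there is a single geodesic germ and a single bound, so no compactness argument over $z$ is needed.
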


\begin{proof}
Again, after gauge transformations, it suffices to consider locally constant $\delta$, where the statement reduces to Lemma \ref{th:tangent-germ}.
\end{proof}

\subsection{Maps to the disc\label{subsec:maps-2}}
Let $S$ and $(A,\delta)$ be as before. We consider maps $u: S \rightarrow B$ satisfying
\begin{equation} \label{eq:u-map-2}
\left\{
\begin{aligned}
& (Du-X_A)^{0,1} = 0, \\
& u(z) \in \delta_z \quad \text{for $z \in \partial S$.}
\end{aligned}
\right.
\end{equation}
These can also be viewed as $J_A$-holomorphic sections $v = (z,u(z))$ of \eqref{eq:b-trivial} with totally real boundary conditions in $\Delta$. There are two versions of energy for solutions of \eqref{eq:u-map-2},
\begin{align}
& E^{\mathit{geom}}(u) = \int_S \|Du-X_A\|^2_B = \int_S v^*\omega_A, \\
\label{eq:e-top}
& E^{\mathit{top}}(u) = \int_S v^*\omega_A - \int_{\partial S} v^*\beta_A,
\end{align}
of which the first one is always nonnegative, whereas the second one is ``topological'' by \eqref{eq:dbeta}. By Lemma \ref{th:beta-bounded}, the difference between the two energies is locally bounded on $S$. To explain the importance of that, let's briefly return to the toy case when $S$ is compact. Then, since there is a unique homotopy class of sections of $(S \times B, \Delta)$, $E^{\mathit{top}}(u)$ is the same for all $u$; and that leads to an upper bound for $E^{\mathit{geom}}(u)$.

As usual, we can apply gauge transformations to \eqref{eq:u-map-2}. In particular, locally near an interior point of $S$, the study of solutions reduces to that of holomorphic functions. The situation at boundary points is more complicated. After a local gauge transformation making $A^\dag = 0$, the boundary condition $\delta^\dag$ consists of a family of geodesic germs which share the same point at infinity. Let's temporarily switch back to the half-plane model for the target space, and assume that the shared point is $\infty$, so that
\begin{equation}
\delta_z^\dag = \{\mathrm{re}(w) = \gamma_z^\dag, \;\; \mathrm{im}(w) \gg 0\} \subset W
\end{equation}
for some $\gamma \in \smooth(\partial S, \bR)$. Then, the equation has the form
\begin{equation} \label{eq:holo-2}
\left\{
\begin{aligned}
& u^\dag: S \longrightarrow W, \\
& \bar\partial u^\dag = 0, \\
& \mathrm{re}(u^\dag(z)) = \gamma_z^\dag \quad \text{for $z \in \partial S$}.
\end{aligned}
\right.
\end{equation}
Solutions of such equations can be produced by means of classical complex analysis. For instance, take $\gamma^\dag \in \smooth_c(\bR,\bR)$. Then the Schwarz integral formula
\begin{equation} \label{eq:integral}
u^\dag(z) = \frac{i}{\pi} \int_\bR \gamma^\dag_\zeta \frac{d\zeta}{z-\zeta}
\end{equation}
defines a holomorphic function on the upper half plane, such that $\mathrm{re}(u^\dag) = \gamma^\dag$ along the real line. Other functions with the same property can be produced from this by adding holomorphic functions with boundary conditions in $i\bR$.

\begin{lemma} \label{th:hofer}
Let $u_k: S \rightarrow B$ be a sequence of solutions of \eqref{eq:u-map-2}. Then, on each fixed compact subset of $S$, $|Du_k|$ (measured with respect to the Euclidean metric on $B$) is bounded.
\end{lemma}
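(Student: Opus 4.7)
The plan is to proceed by contradiction using a standard bubbling-off analysis, exploiting that $B\subset\bC$ is bounded in the Euclidean metric. Assuming $|Du_k(z_k)|$ blows up for some sequence $z_k$ in a fixed compact $K\subset S$, I would first apply Hofer's lemma to the functions $|Du_k|$ to produce modified points $z_k'\to z_\infty\in K$ and scales $R_k:=|Du_k(z_k')|\to\infty$ on which $|Du_k|\le 2R_k$ over Euclidean balls (or half-balls, if $z_\infty\in\partial S$) of radius $\epsilon_k/R_k$ with $\epsilon_k\to\infty$. Rescaling as $\tilde u_k(z):=u_k(z_k'+z/R_k)$ then produces maps defined on domains exhausting $\bC$, or $\bH$ after locally straightening $\partial S$ to the real axis, with $|D\tilde u_k(0)|=1$ and $|D\tilde u_k|\le 2$.

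Next, I would extract a holomorphic limit. Since $|D\tilde u_k|$ is uniformly bounded and the target is contained in $\bar B$, the rescaled equation $(D\tilde u_k - X_{A}(z_k'+z/R_k)/R_k)^{0,1}=0$ has inhomogeneous term decaying uniformly on compact sets. Elliptic bootstrapping together with Arzel\`a-Ascoli should produce a $C^\infty_{\mathrm{loc}}$ subsequential limit $u_\infty$ that is genuinely holomorphic, takes values in $\bar B$, and satisfies $|Du_\infty(0)|=1$. In the boundary case, continuous dependence of $\delta_z$ on $z$ will force $u_\infty(\bR)$ to lie on the full geodesic of $B$ extending $\delta_{z_\infty}$.

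Finally, I would rule out $u_\infty$ by Liouville. In the interior case, $u_\infty\colon\bC\to\bar B$ is a bounded entire function, hence constant, contradicting $|Du_\infty(0)|=1$. In the boundary case, conjugating by a hyperbolic isometry of $B$ that carries the limiting geodesic to the real diameter $(-1,1)$, I can arrange the boundary values of $u_\infty$ to lie in $\bR$; the Schwarz reflection principle (using that $B$ is symmetric about $\bR$) then extends $u_\infty$ to a bounded holomorphic map $\bC\to\bar B$, which must be constant by Liouville, again contradicting $|Du_\infty(0)|=1$.

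The main technical obstacle is the boundary case: one must verify that the germ-valued boundary condition $\delta_z$ depends continuously on $z$ in a way strong enough that $u_\infty(\bR)$ actually lies on a single geodesic of $B$, and then implement Schwarz reflection carefully across a curved real-analytic arc (handled by the preliminary Möbius normalization). Once these compatibilities are set up, the Liouville step is routine and the desired Euclidean derivative bound follows.
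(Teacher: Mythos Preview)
Your proposal is correct and follows essentially the same approach as the paper: argue by contradiction, apply Hofer's Lemma to rescale and produce a non-constant holomorphic bubble from $\bC$ (interior case) or from the upper half-plane with boundary on a single geodesic (boundary case) into $\bar B$, then rule both out. The paper is terser---it simply asserts that neither bubble can exist---whereas you spell out the Liouville and Schwarz-reflection steps, which is fine.
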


\begin{proof}
We borrow an argument from pseudo-holomorphic curve theory. Suppose that on the contrary, after passing to a subsequence of the $(u_k)$, one has $z_k \rightarrow z_\infty \in S$ such that $|Du_k(z_k)| \rightarrow \infty$. Using Hofer's Lemma (see e.g.\ \cite[p.~137]{abbas14} for an exposition), one finds a sequence of rescalings whose limit is one of the following:
\begin{align}
&
\parbox{36em}{A non-constant holomorphic map from the complex plane to $\bar{B}$.}
\\ &
\parbox{36em}{A non-constant holomorphic map from the upper half plane to $\bar{B}$, with boundary values on a geodesic.}
\end{align}
Of course, neither is possible, which establishes our argument.
\end{proof}

\begin{lemma} \label{th:compactness-copy}
Let $u_k: S \rightarrow B$ be a sequence of solutions of \eqref{eq:u-map-2}. Suppose that there are points $z_k$ contained in a compact subset of $S$, such that $u_k(z_k) \rightarrow \partial_\infty B$. Then $u_k \rightarrow \partial_\infty B$ uniformly on compact subsets. Moreover, a subsequence converges (in the same sense) to a map $u_\infty: S \rightarrow \partial_\infty B$ which satisfies
\begin{equation} \label{eq:infinity}
\left\{
\begin{aligned}
& Du_\infty = \bar{X}_A, \\
& u_\infty(z) = \lambda_z = \partial_\infty \delta_z \quad \text{for $z \in \partial S$.}
\end{aligned}
\right.
\end{equation}
\end{lemma}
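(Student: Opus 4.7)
The plan is to follow the structure of Lemma \ref{th:compactness-0}, adapting it to the disc model and boundary conditions along geodesic germs. By Lemma \ref{th:hofer}, $|Du_k|$ is locally uniformly bounded (in the Euclidean metric), and since $\bar{B}$ is compact, Arzel\`a--Ascoli yields a subsequence converging uniformly on compact subsets to a continuous map $u_\infty: S \to \bar{B}$. Standard elliptic regularity for \eqref{eq:u-map-2} (with its totally real boundary condition) upgrades this to $C^\infty_{\mathrm{loc}}$ convergence on the open set $U = \{u_\infty \in B\}$, where $u_\infty$ inherits \eqref{eq:u-map-2}.

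The set $Z = S \setminus U$ is closed and, by hypothesis (taking a subsequential limit of $z_k$), nonempty; the heart of the proof is showing $Z$ is open, whereupon connectedness of $S$ forces $Z = S$. Fix $z_\infty \in Z$, apply a local gauge transformation to bring $A$ to $A^\dag = 0$ as in Section \ref{subsec:maps-2}, and switch to the half-plane model so that $\lambda^\dag \equiv \infty$. Then $u_k^\dag$ is honestly holomorphic, the boundary condition reads $\mathrm{re}\, u_k^\dag(z) = \gamma_z^\dag$ on $\partial S$, and the germ condition pins $\mathrm{im}\, u_k^\dag$ above a positive threshold there. For interior $z_\infty$, the argument of Lemma \ref{th:compactness-0} transfers directly: $u_\infty^\dag(z_\infty) \in \bR$ forces the nonnegative harmonic function $\mathrm{im}\, u_\infty^\dag$ to vanish at an interior point, hence $\equiv 0$ locally by the strong maximum principle, so $u_\infty^\dag$ is locally constant; and $u_\infty^\dag(z_\infty) = \infty$ reduces to the previous case via $w \mapsto -1/w$ (which preserves $W$).

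At a boundary point $z_\infty$, the germ condition rules out $u_\infty^\dag(z_\infty) = \gamma_{z_\infty}^\dag \in \bR$ (this would force $\mathrm{im}\, u_\infty^\dag(z_\infty) = 0$, violating the positive lower bound), leaving $u_\infty^\dag(z_\infty) = \infty$. To handle this case, pick a holomorphic function $F$ on a neighborhood of $z_\infty$ with $\mathrm{re}\, F = \gamma^\dag$ on $\partial S$ (harmonic extension of $\gamma^\dag$ plus conjugate), and set $v_k = u_k^\dag - F$. Then $v_k$ is holomorphic with $v_k \in i\bR$ on $\partial S$, and Schwarz reflection extends it holomorphically across the boundary. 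Since $v_k(z_\infty) \to \infty$, the reciprocals $w_k = 1/v_k$ are well defined and holomorphic in a neighborhood of $z_\infty$ for large $k$; after passing to a subsequence they converge to some $w_\infty$ with $w_\infty(z_\infty) = 0$ and the reflection symmetry $w_\infty(\bar z) = -\overline{w_\infty(z)}$. If $w_\infty \not\equiv 0$, its Taylor expansion (forced by symmetry into the form $w_\infty(z) = i b_n z^n + \cdots$ with $b_n \in \bR \setminus \{0\}$) gives $\mathrm{im}\, v_\infty(re^{i\theta}) \sim -\cos(n\theta)/(b_n r^n)$, which is unbounded below in some direction $\theta \in (0,\pi)$; this contradicts the lower bound $\mathrm{im}\, v_\infty \geq -\mathrm{im}\, F$ valid in the upper half-disc. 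Hence $w_\infty \equiv 0$, equivalently $u_\infty^\dag \equiv \infty$ locally, and pulling back by $\Phi$ gives $u_\infty \equiv \lambda$ near $z_\infty$.

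With $Z = S$ established, the equation $Du_\infty = \bar{X}_A$ and the boundary condition $u_\infty(z) = \lambda_z$ follow by taking limits of \eqref{eq:u-map-2} in local charts about $\partial_\infty B$ (using that $X_A$ extends smoothly to $\bar X_A$, as in Section \ref{subsec:conn-and-germs}). Upgrading from subsequential convergence to uniform convergence of the full sequence on compact subsets is standard: any subsequence of $(u_k)$ admits a further subsequence with these properties, so the original sequence must converge to $\partial_\infty B$. The main technical obstacle is the boundary case, where the varying geodesic foot $\gamma^\dag_z$ forbids a direct application of Schwarz reflection to $u_k^\dag$ itself and motivates the introduction of the auxiliary $F$ and the passage to reciprocals $1/v_k$.
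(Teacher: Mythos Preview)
Your argument is correct and follows essentially the same route as the paper. Both reduce the boundary case (after gauging to $A^\dag = 0$ and passing to the half-plane model with the shared point at infinity sent to $\infty$) by subtracting a fixed holomorphic function with the prescribed real boundary values---the paper uses the Schwarz integral \eqref{eq:integral}, you use a harmonic extension plus its conjugate---and then argue that a non-constant limit with a pole at $z_\infty$ forces the imaginary part to be unbounded below in the half-disc, contradicting $u_\infty^\dag \in \bar W$. Your passage to reciprocals $w_k = 1/v_k$ and the Taylor expansion of $w_\infty$ simply make explicit what the paper compresses into one sentence about $q_\infty$.
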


\begin{proof}
For a subsequence, Lemma \ref{th:hofer} establishes convergence on compact subsets to some solution $u_\infty: S \rightarrow \bar{B}$, with $u_\infty(z_\infty) \in \partial_\infty B$ for some $z_\infty \in S$. If $z_\infty$ is an interior point, one can argue as in Lemma \ref{th:compactness-0} to conclude that $u_\infty$ takes values in $\partial_\infty B$, and satisfies \eqref{eq:infinity}.

Suppose now that $z_\infty \in \partial S$, in which case necessarily $u_\infty(z_\infty) = \gamma_{z_\infty} = \partial_\infty \delta_{z_\infty}$. We restrict to a half-disc $C$ surrounding $z_\infty$, and apply a local gauge transformation to reduce to $A^\dag = 0$. The outcome is that we have a sequence $u_k^\dag: C \rightarrow W$ of solutions of \eqref{eq:holo-2}, which converge to some $u_\infty^\dag: C \rightarrow \bar{W}$ such that $u_\infty^\dag(0) = \infty$. Without loss of generality, we can assume that the boundary conditions $\gamma^\dag \in \smooth(\partial C,\bR)$ extend to a compactly supported function on the real line. We can then use \eqref{eq:integral} to write
\begin{equation}
u_k^\dag = u^\dag + q_k,
\end{equation}
where $u^\dag$ is a fixed solution, and the $q_k$ are holomorphic functions with boundary values in $i\bR$. In the limit,
\begin{equation}
u_\infty^\dag = u^\dag + q_\infty,
\end{equation}
where $q_\infty: (C,\partial C) \rightarrow (\bar{\bC}, i\bR \cup \{\infty\})$ satisfies $q_\infty(0) = \infty$. If $q_\infty$ is not constant, there are points close to $z = 0$ where $\mathrm{im}(q_\infty)$ has arbitrarily large negative imaginary part, which is a contradiction to the fact that $u_\infty^\dag$ takes values in $\bar{W}$. Hence, $q_\infty$ must be constant equal to $\infty$, which means that $u_\infty^\dag = \infty$, showing that the limit $u_\infty$ takes values in $\partial_\infty B$ and satisfies \eqref{eq:infinity}. As usual, the fact that this holds for subsequences implies $u_k \rightarrow \partial_\infty B$ for the original sequence.
\end{proof}

Finally, we want to consider the special case of the cylinder. Namely, take $A = a_t \mathit{dt} \in \scrP_\tau(S^1)$ for some $\tau>2$, and pull it back to $S = (0,l) \times S^1$. The resulting special case of \eqref{eq:u-map-2} is
\begin{equation} \label{eq:cyl-equation}
\partial_s u + i(\partial_t u - X_{a_t}) = 0,
\end{equation}
and \cite[Lemma 8.4]{seidel17} says the following:

\begin{lemma} \label{th:length}
Solutions of \eqref{eq:cyl-equation} can only exist on a cylinder of length $l \leq L$, where
$L = \pi (2\log(\tau/2 + \sqrt{\tau^2/4-1}))^{-1}$.
\end{lemma}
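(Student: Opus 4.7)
The plan is to gauge-trivialize the equation, pass to the universal cover of the cylinder to obtain an honest holomorphic map into $B$, descend to the quotient annulus $B/\langle g\rangle$, and recognize $L$ as its conformal modulus. The bound then reduces to the classical modulus inequality for holomorphic maps between annuli.

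First, solve the ODE $\tfrac{d}{dt}\tilde\Phi = a_t\tilde\Phi$, $\tilde\Phi(0)=\Id$, on the universal cover of $S^1$ to obtain a smooth path $\tilde\Phi:\bR\to\tilde G$. Because $A = a_t\,\mathit{dt}$ is pulled back from $S^1$, this path satisfies $\tilde\Phi(t+1) = \tilde g\,\tilde\Phi(t)$, where $\tilde g$ is the canonical lift of the holonomy $g\in G$. Lift any solution $u$ of \eqref{eq:cyl-equation} periodically to $(0,l)\times\bR$, and set $\tilde u(s,t) = \tilde\Phi(t)^{-1}\cdot u(s,t)$ using the $G$-action on $B$. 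The gauge-covariance of the equation (Section \ref{subsec:maps-2}) ensures that $\tilde u$ is holomorphic with respect to the standard complex structures, while the transformation rule for $\tilde\Phi$ gives the equivariance $\tilde u(s,t+1)=\tilde g^{-1}\tilde u(s,t)$.

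Second, since $g$ is hyperbolic its fixed points lie on $\partial_\infty B$, so $\langle g\rangle$ acts freely and properly discontinuously on $B$, and $B/\langle g\rangle$ is conformally an annulus. The equivariance makes $\tilde u$ descend to a holomorphic map
$u_{\mathrm{quot}}:(0,l)\times S^1\longrightarrow B/\langle g\rangle$.
The loop $t\mapsto\tilde u(s_0,t)$, $t\in[0,1]$, starts at $\tilde u(s_0,0)$ and ends at $g^{-1}\tilde u(s_0,0)$, so its projection is a generator of $\pi_1(B/\langle g\rangle)=\bZ$; hence the induced map on fundamental groups has degree $\pm 1$.

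Third, compute both moduli. Conjugating $g$ to the standard form $w\mapsto\mu w$ on the upper half-plane, with $\mu^{1/2}+\mu^{-1/2}=\tau$, gives $\mu=(\tau/2+\sqrt{\tau^2/4-1})^2$. The substitution $v=(\log w)/\log\mu$ identifies $\bH/\langle g\rangle$ with the flat cylinder $\{0<\mathrm{im}(v)<\pi/\log\mu\}/(v\mapsto v+1)$, of conformal modulus $\pi/\log\mu = L$. The source $(0,l)\times S^1$ has conformal modulus $l$. The modulus inequality for degree-one essential holomorphic maps between annuli then yields $l\leq L$.

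The main obstacle is not any single step but the bookkeeping linking them: one must verify that the lift $\tilde g$ dictated by the definition of $\scrP_\tau(S^1)$ (rotation number $1$, trace $\tau$) is exactly the element whose quotient gives an annulus of modulus $L$, and that the descended map $u_{\mathrm{quot}}$ indeed has degree $\pm 1$ rather than a higher multiple. This is what guarantees the sharp constant $L$; once it is in place, the hyperbolic-geometry computation of the modulus and the invocation of Grötzsch's inequality are routine. A minor auxiliary point is to rule out degenerate cases (e.g.\ $\tilde u$ constant in $t$), which cannot occur because a hyperbolic $g$ has no fixed points inside $B$.
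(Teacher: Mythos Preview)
The paper does not prove this lemma; it simply quotes it as \cite[Lemma 8.4]{seidel17}. Your argument supplies a correct self-contained proof, and is the natural one: trivialize the connection on the universal cover of the cylinder, descend to a degree-$\pm 1$ holomorphic map $(0,l)\times S^1 \to B/\langle g\rangle$, compute the modulus of the target annulus from the trace of $g$, and invoke the monotonicity of modulus under essential holomorphic maps (which follows from Schwarz--Pick applied to the core geodesic, not just the classical Gr\"otzsch inequality for embeddings).

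Two minor remarks. First, the monodromy relation should read $\tilde\Phi(t+1) = \tilde\Phi(t)\,\tilde g$ (right multiplication), not $\tilde g\,\tilde\Phi(t)$: if $\Psi(t)=\tilde\Phi(t+1)\tilde\Phi(1)^{-1}$ then $\Psi(0)=\Id$ and $\Psi'=a_t\Psi$, so $\Psi=\tilde\Phi$. With the corrected formula one still gets $\tilde u(s,t+1) = (\tilde\Phi(t)\tilde g)^{-1}u(s,t) = g^{-1}\tilde u(s,t)$, exactly as you claim, so the slip is harmless. Second, your concern about the rotation number is unnecessary: the quotient $B/\langle g\rangle$ and its modulus depend only on the conjugacy class of $g\in G$, hence only on $\tau$; the lift $\tilde g$ and its rotation number play no role in this particular lemma. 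The degree-$\pm 1$ verification, on the other hand, is genuinely needed for the sharp constant, and your covering-space argument (a lift from $p$ to $g^{-1}p$ always projects to a generator of $\pi_1(B/\langle g\rangle)$) is complete as written and requires no separate treatment of degenerate cases.
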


\section{The Fukaya category\label{sec:fukaya}}
We now proceed to the definition of Fukaya category that arises from the elementary geometric considerations in Section \ref{sec:affine}. Most  of the construction copies the standard pattern. After setting up the geometric framework, we will therefore concentrate on one aspect which is specific to this context, namely how pseudo-holomorphic curves are prevented from escaping to infinity.

\subsection{Target space geometry\label{subsec:space}}
As the fibre, we fix an exact symplectic manifold with boundary. By this, we mean a compact manifold with boundary $M$, together with a symplectic form $\omega_M$, a primitive $\theta_M$, and a compatible almost complex structure $J_M$ which is weakly convex:
\begin{equation} \label{eq:weakly-convex}
\parbox{36em}{Any $J_M$-holomorphic curve touching $\partial M$ must be entirely contained in it.}
\end{equation}
%
The actual target space will be a manifold with boundary $E$, again with a symplectic form $\omega_E$ and primitive $\theta_E$, and which comes with a proper map to the upper half-plane,
\begin{equation} \label{eq:pi}
\pi: E \longrightarrow W.
\end{equation}
For $x \in E$, let $\mathit{TE}_x^h \subset \mathit{TE}_x$ be the symplectic orthogonal complement of $\mathit{TE}_x^v =\mathit{ker}(D\pi_x)$. Let's say that $\pi$ is symplectically locally trivial at $x$ if:
\begin{align}
&
\parbox[t]{36em}{$x$ is a regular point of $\pi$.}
\\
&
\parbox[t]{36em}{$\mathit{TE}_x = \mathit{TE}_x^v \oplus \mathit{TE}_x^h$, which means that both subspaces are symplectic.}
\\
&
\parbox[t]{36em}{$D\pi_x: \mathit{TE}_x^h \rightarrow \mathit{TW}_{\pi(x)}$ pulls back $\omega_W$ to the restriction of $\omega_E$; and the same is true in a neighbourhood of $x$.}
\end{align}
As the name suggests, one can use $\mathit{TE}^h$ as a connection near $x$, so as to locally identify $E$ with a product of the fibre and base (carrying their respective symplectic structures). With this terminology at hand, we require the following conditions:
\begin{align}
& \label{eq:fibrewise-boundary}
\parbox[t]{36em}{
At any point $x \in \partial E$, $\pi$ is symplectically locally trivial; moreover, $\mathit{TE}_x^h \subset T(\partial E)_x$.} 
\\
& \label{eq:at-infinity}
\parbox[t]{36em}{
$\pi$ is symplectically locally trivial outside the preimage of a closed disc $V \subset W$.}
\\
& \label{eq:reference-fibre}
\parbox[t]{36em}{
For some base point $\ast \in W$ lying outside the previously mentioned $V$, the fibre $E_\ast$ is identified with $M$, in a way which is compatible with the symplectic form and its primitive.}
\end{align}

From \eqref{eq:at-infinity}, one sees that $\pi$ is locally a product over the annulus $W \setminus V$. More precisely, let $U \subset W \setminus V$ be an open disc. Then there is a diffeomorphism
\begin{equation} \label{eq:horizontal-collar}
\xymatrix{
\ar[dr]_{\pi} \pi^{-1}(U) \ar[rr]^-{\iso} && U \times M \ar[dl]^-{\;\;\text{projection}} \\ & U
}
\end{equation}
which takes $\omega_E$ to $\omega_W + \omega_M$, and $\theta_E$ to $\theta_W + \theta_M + \{\text{some exact one-form}\}$. For the statement concerning primitives, suppose first that $\ast \in U$. Then, there is a preferred choice of \eqref{eq:horizontal-collar} which restricts to the given identification $E_\ast \iso M$. As consequence, the difference between $\theta_E|\pi^{-1}(U)$ and the pullback of $\theta_W|U + \theta_M$ is a closed one-form vanishing on the fibre over $\ast$, which is therefore exact. To reduce the general case to this, it is enough to observe that outside $V$, the parallel transport maps for the connection $\mathit{TE}^h$ yield exact symplectic isomorphisms between fibres. 

As another consequence of \eqref{eq:horizontal-collar}, one can construct a preferred compactification
\begin{equation} \label{eq:compactification}
\bar{\pi}: \bar{E} \longrightarrow \bar{W}.
\end{equation}
We write $\partial_\infty E = \bar{E} \setminus E = \bar\pi^{-1}(\partial_\infty W)$. The given $\omega_E$ does not extend to $\bar{E}$, but one can define symplectic forms on that space as follows. Take $\psi \in \smooth(\bar{W},\bR)$ which vanishes on $V$ and is equal to $1$ near the boundary; as well as a positive two-form $\omega_{\bar{W}}$ on the closed disc. Then, there is a unique symplectic form $\omega_{\bar{E}}$ on the compactification such that
\begin{equation} \label{eq:modified}
\omega_{\bar{E}}|E = \omega_E + \pi^*(\psi(\omega_{\bar{W}} - \omega_W)).
\end{equation}
Note that $\omega_{\bar{E}}$ is again exact (its restriction to $E$ is cohomologous to $\omega_E$, and the restriction map $H^2(\bar{E};\bR) \rightarrow H^2(E;\bR)$ is of course an isomorphism).

From \eqref{eq:fibrewise-boundary} it follows that $\pi|\partial E$ is a smooth fibre bundle. In fact, by integrating the connection $\mathit{TE}^h$ near $\partial E$, one obtains a diffeomorphism (extending part of the identification $E_* \iso M$)
\begin{equation} \label{eq:vertical-collar}
\xymatrix{
\ar@{^{(}->}[d] \{\text{neighbourhood of $\partial E$}\} \ar[rr]^{\iso} 
&& W \times \{\text{neighbourhood of $\partial M$}\}
\ar@{^{(}->}[d]
\\ 
E \ar[r]_{\pi} & W & W \times M. 
\ar[l]^{\text{projection}}
}
\end{equation}
This takes $\omega_E$ to $\omega_W + \omega_M$, and $\theta_E$ to $\theta_W +\theta_M + \{\text{some exact one-form}\}$. The fact that we can take the image of \eqref{eq:vertical-collar} to be of the form $W \times \{\text{neighbourhood of $\partial M$}\}$, rather than just some neighbourhood of $W \times \partial M$, depends on \eqref{eq:horizontal-collar}.

Our next task is to introduce the relevant class of Lagrangian submanifolds. Those are connected $L \subset E$ such that:
\begin{align}
& \label{eq:lag}
\parbox[t]{36em}{$L$ is disjoint from $\partial E$.} 
\\ & \label{eq:lag-2}
\parbox[t]{36em}{$\pi|L$ is proper, and there is a $\lambda_L \in \bR$ such that $\pi(L)$ is contained in the union of a compact set and the vertical half-open segment $\{\mathrm{re}(w) = \lambda_L, \, \mathrm{im}(w) \ll 1\} \subset W$.}
\\ & \label{eq:lag-3}
\parbox[t]{36em}{$L$ is exact with respect to $\theta_E$.}
\end{align}
On a suitable segment as in \eqref{eq:lag-2}, $L$ is given by a family of exact Lagrangian submanifolds in the fibres, which are mapped to each other by $\mathit{TE}^h$-parallel transport. As a consequence, the closure $\bar{L} \subset \bar{E}$ is a smooth submanifold with boundary $\partial\bar{L} = \bar{L} \cap \bar{E}_{\lambda_L}$. If one chooses the function $\psi$ in \eqref{eq:modified} to be zero on a sufficiently large subset, $\bar{L}$ will be Lagrangian with respect to $\omega_{\bar{E}}$. Note that we have not made any assumption on the local structure of the critical points of \eqref{eq:pi}; if we did impose Lefschetz (complex nondegeneracy) conditions, then the Lefschetz thimbles, for paths that become vertical segments at infinity, would belong to the class under consideration.

Define $\scrJ(E)$ to be the space of compatible almost complex structures $J$ on $E$ with the following properties:
\begin{align}
& 
\parbox[t]{36em}{The image of $J$ under \eqref{eq:vertical-collar} (possibly after shrinking the neighbourhoods that appear there) is the product of $J_M$ and the complex structure of the base.}
\\
&
\parbox[t]{36em}{Outside the preimage of some compact subset of $W$, $D\pi$ is $J$-holomorphic.}
\\
&
\parbox[t]{36em}{$J$ extends smoothly to an almost complex structure $\bar{J}$ on $\bar{E}$.}
\end{align}
This extension automatically has the property that $D\bar\pi$ is $\bar{J}$-holomorphic at any point of $\partial_\infty E$. Moreover, the closure $\bar{L}$ of any of our Lagrangian submanifolds is $\bar{J}$-totally real. Given any $J$, one can arrange the choice of $\psi$ in \eqref{eq:modified} so that $\bar{J}$ is compatible with $\omega_{\bar{E}}$.

Any $J \in \scrJ(E)$, combined with $\omega_E$, determines a Riemannian metric, whose norm we will denote by $\|\cdot\|_{E,J}$. Given two such almost complex structures, the associated metrics are commensurable (each bounds the other up to a constant). To see that, note that at any point $x$ close to infinity, the subspaces $TE_x^h$ and $TE_x^v$ are orthogonal; the metric on the first summand is the pullback by projection of the hyperbolic metric on $TW_{\pi(x)}$, while the commensurability class on the second summand is governed by the fact that it extends to $\bar{E}$. With that in mind, we will sometimes write $\|\cdot\|_E$ if only the commensurability class of the metric is important. By the same argument, if we have any Riemannian metric on $\bar{E}$, there is an inequality
\begin{equation} \label{eq:compare-norms}
\|X\|_{\bar{E}} \lesssim \|X\|_E  \quad \text{for any $X \in TE$.}
\end{equation}

For any $\gamma \in \frakg_{\mathit{aff}}$, we consider the class $\scrH_\gamma(E)$ of those $H \in \smooth(E,\bR)$ such that:
\begin{equation}
\parbox[t]{36em}{
Outside a compact subset of $E \setminus \partial E$ (which means, near $\partial E$ as well as outside a preimage of some compact subset of $W$), $H$ is the pullback of $H_\gamma$; see \eqref{eq:ha}.}
\end{equation}
On the region where these restrictions apply and where the fibration is symplectically locally trivial, the Hamiltonian vector field $X$ of $H$ agrees with the unique lift of $X_\gamma \in \smooth(W,TW)$ to $\mathit{TE}^h$. In particular, $X$ is tangent to $\partial E$. Moreover, it extends to a vector field $\bar{X}$ on $\bar{E}$, which is tangent to $\partial_\infty E$.

\subsection{Energy}
Let $S$ be a connected oriented Riemann surface with boundary, together with a pair $(A,\lambda)$ satisfying \eqref{eq:a-lambda-1} and \eqref{eq:a-lambda-2}. We equip this with the following additional data:
\begin{align}
& \label{eq:data-1}
\parbox[t]{36em}{
A family of almost complex structures $J = (J_z)$, $J_z \in \scrJ(E)$, parametrized by $z \in S$.}
\\ & \label{eq:data-2}
\parbox[t]{36em}{
A one-form $K \in \Omega^1(S,\smooth(E,\bR))$ with values in functions on $E$ (equivalently, a section of the pullback bundle $T^*S \rightarrow S \times E$; or, a one-form on $S \times E$ which vanishes in $TE$-direction), such that for each $\xi \in TS$, $K(\xi)$ lies in $\scrH_{A(\xi)}(E)$. Let $X_K \in \Omega^1(S,\smooth(E,TE))$ be the associated one-form with values in Hamiltonian vector fields (or, section of $\mathit{Hom}(TS,TE) \rightarrow S \times E$).}
\\ & \label{eq:data-3}
\parbox[t]{36em}{A family of Lagrangian submanifolds $L_z$ parametrized by $z \in \partial S$ (equivalently, a subbundle $\Lambda \subset \partial S \times E$ whose fibres are Lagrangian submanifolds), which lie in the general class \eqref{eq:lag}--\eqref{eq:lag-3}, with $\lambda_{L_z} = \lambda_z$. 
}
\end{align}
We consider maps $u: S \rightarrow E$ such that:
\begin{equation} \label{eq:main-cr}
\left\{
\begin{aligned}
& (Du-X_K)^{0,1} = 0 \quad \text{with respect to $J_{z,u(z)}$,} \\
& u(z) \in L_z \quad \text{for $z \in \partial S$.}
\end{aligned}
\right.
\end{equation}
The geometric energy of a solution is
\begin{equation}
\label{eq:e-geom}
E^{\mathit{geom}}(u) = \int_S \|Du - X_K\|_{E,J_z}^2.
\end{equation}

One can approach \eqref{eq:main-cr} in geometric terms resembling those from Sections \ref{subsec:package} and \ref{subsec:conn-and-germs}. Think of $X_K$ as a Hamiltonian connection on the (trivial) fibre bundle
\begin{equation} \label{eq:e-trivial}
S \times E \longrightarrow S,
\end{equation}
which lifts any vector field $\xi$ on $S$ to $\xi + X_K(\xi)$. The curvature of this connection is a two-form $R_K \in \Omega^2(S,\smooth(E,\bR))$ (equivalently, a section of $\Lambda^2T^*S \rightarrow S \times E$; or, a two-form on $S \times E$ which vanishes if we insert an element of $TE$), given in local coordinates $z = s+it$ on $S$ by
\begin{equation}
R_K = \big( \partial_t K(\partial_s) - \partial_s K(\partial_t) + \{K(\partial_s), K(\partial_t)\} \big)\, \mathit{ds} \wedge \mathit{dt}.
\end{equation}

\begin{lemma} \label{th:rk-compact}
$R_K$ takes values in functions that vanish outside a compact subset of $E \setminus \partial E$.
\end{lemma}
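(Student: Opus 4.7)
The plan is to show that $R_K$ vanishes on the region where $K$ reduces to the pullback of the canonical Hamiltonians $H_{A(\xi)}$, and to identify that region as the complement of a compact set in $E \setminus \partial E$.

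First, I would work in local coordinates $z = s + it$ on $S$. By hypothesis \eqref{eq:data-2}, for each $z$ we have $K(\partial_s)_z \in \scrH_{A(\partial_s)_z}(E)$ and $K(\partial_t)_z \in \scrH_{A(\partial_t)_z}(E)$; by smoothness of $K$ in $z$, on any relatively compact open $U \subset S$ there is a single compact subset $K' \subset E \setminus \partial E$ (namely, the union of a collar of $\partial E$ and the preimage of a large disc in $W$) such that on $U \times (E \setminus K')$ both $K(\partial_s)$ and $K(\partial_t)$ agree with the pullbacks $\pi^* H_{A(\partial_s)}$ and $\pi^* H_{A(\partial_t)}$.

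Next, on $E \setminus K'$, conditions \eqref{eq:at-infinity} and the collar structure \eqref{eq:vertical-collar} make $\pi$ symplectically locally trivial; in particular, $\pi^* \{f,g\}_W = \{\pi^*f, \pi^*g\}_E$ for functions pulled back from $W$. Combined with the $\bR$-linearity of $\gamma \mapsto H_\gamma$ (which commutes with the $z$-derivatives since $\pi$ is $z$-independent), the curvature computation gives
\begin{equation*}
R_K \big|_{U \times (E \setminus K')} \;=\; \pi^*\bigl( \partial_t H_{A(\partial_s)} - \partial_s H_{A(\partial_t)} + H_{[A(\partial_s),A(\partial_t)]} \bigr)\, \mathit{ds} \wedge \mathit{dt}.
\end{equation*}
By the Lie algebra homomorphism property displayed just after \eqref{eq:ha}, this in turn equals the pullback of the left-hand side of \eqref{eq:poisson-flatness} evaluated for $A$.

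Finally, I would invoke the flatness of $A$ from \eqref{eq:a-lambda-1}: expanding $F_A = -dA + \half[A,A] = 0$ in coordinates yields $\partial_t A(\partial_s) - \partial_s A(\partial_t) + [A(\partial_s),A(\partial_t)] = 0$, and applying the linear map $\gamma \mapsto H_\gamma$ makes the bracketed expression vanish. Hence $R_K \equiv 0$ on $U \times (E \setminus K')$, so its support lies in $U \times K'$; since this holds for any relatively compact $U$, the conclusion follows.

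The only mildly delicate point is the passage from the pointwise condition ``$K(\xi) \in \scrH_{A(\xi)}(E)$'' to the existence of a single compact $K'$ that works for all $z$ in a compact region of $S$; this is a routine consequence of smoothness of $K$ together with the fixed form of the ``at infinity'' constraints (a collar of $\partial E$ and the preimage of some compact in $W$), and once granted, everything else is a direct computation.
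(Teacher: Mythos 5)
Your proposal is correct and follows essentially the same route as the paper, whose proof is the same observation stated in one line: outside a compact subset of $E \setminus \partial E$ the functions $K(\xi)$ coincide with the pullbacks of $H_{A(\xi)}$, where $\pi$ is symplectically locally trivial, so the curvature reduces to the pullback of the left-hand side of \eqref{eq:poisson-flatness}, which vanishes by flatness of $A$ and the Poisson-bracket compatibility of $\gamma \mapsto H_\gamma$. The ``mildly delicate'' uniformity point you flag is indeed left implicit in the paper (compare the explicit uniformity demanded in \eqref{eq:match-1}); it is a tacit assumption on how the family $K$ is chosen rather than a formal consequence of smoothness, but this matches the paper's intent.
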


\begin{proof}
This follows from the assumption that $K(\xi) \in \scrH_{A(\xi)}(E)$, and the flatness of $A$.
\end{proof}

The connection determines two-forms $\omega_K^{\mathit{geom}}$, $\omega_K^{\mathit{top}}$ on $E$, which agree with $\omega_E$ on each fibre: in local coordinates on $S$ as before, these can be written as
\begin{align}
& \omega_K^{\mathit{geom}} = \omega_E + \omega_E(X_K(\partial_s),\cdot) \wedge \mathit{ds}
+ \omega_E(X_K(\partial_t),\cdot) \wedge \mathit{dt} - \omega_E(X_K(\partial_s), X_K(\partial_t)) \mathit{ds} \wedge \mathit{dt}, \\
&
\omega_K^{\mathit{top}}  = \omega_E - d( K(\partial_s) \mathit{ds}) - d (K(\partial_t) \mathit{dt})
= \omega_K^{\mathit{geom}} + R_K.
\end{align}
The second one is closed, and has an obvious primitive,
\begin{equation}
\theta_K^{\mathit{top}} = \theta_E - K.
\end{equation}
There is a one-form $\beta_K$ on $\Lambda$ vanishing in fibre direction, and a function $P_K$, such that:
\begin{align}
\label{eq:beta-k}
& \beta_K \in \Omega^1(\Lambda), \quad d\beta_K = \omega_K^{\mathit{top}}|\Lambda, \\
\label{eq:p-k}
& P_K \in \smooth(\Lambda, \bR), \quad dP_K + \beta_K = \theta_K^{\mathit{top}}|\Lambda.
\end{align}
To see that, one argues as follows. By the exactness assumption, there are functions on each $L_z$ whose derivative is $\theta_E|L_z$. We assemble those into a single function $P_K$ on $\Lambda$ (which is unique up to adding locally constant functions). Clearly, $\theta_K^{\mathit{top}}|\Lambda - dP_K$ then vanishes in fibre direction, and one defines this to be $\beta_K$. For closer resemblance with \eqref{eq:dbeta-formula}, let's note the following. Since all the $L_z$ are exact, their $z$-dependence is Hamiltonian, hence can be expressed by a one-form $\alpha$ vanishing in fibre direction,
\begin{equation}
\alpha \in \Omega^1(\Lambda), \quad d\alpha = \omega_E|\Lambda.
\end{equation}
It follows that $\beta_K - \alpha + K|\Lambda$ is a closed one-form which vanishes in fibre direction. Since the $L_z$ are connected by assumption, we then necessarily have
\begin{equation} \label{eq:beta-k-2}
\beta_K = \alpha - K|\Lambda + \{\text{some one-form pulled back from $\partial S$}\}.
\end{equation}

\begin{lemma} \label{th:locally-bounded-2}
$\beta_K$ is locally bounded on $\partial S$.
\end{lemma}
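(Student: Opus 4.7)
The plan is to work directly from \eqref{eq:p-k}, writing $\beta_K = \theta_K^{\mathit{top}}|_\Lambda - dP_K = \theta_E|_\Lambda - K|_\Lambda - dP_K$ for a smooth choice of $P_K$ whose restriction to each $L_z$ is a primitive of $\theta_E|_{L_z}$. Since $\xi$ has compact support on $\partial S$, the problem is local; and on the compact part of $E$ provided by \eqref{eq:at-infinity}, all objects involved are smooth, so boundedness is automatic there. The content of the lemma lies in the behaviour near infinity. Because $\beta_K$ vanishes in the fibre direction, we may evaluate it on any lift $\tilde\xi$ of $\xi$ to $\Lambda$, so we are free to choose a convenient one.

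Near infinity, apply \eqref{eq:horizontal-collar} to identify $\pi^{-1}(U) \cong U \times M$, in which $L_z \cap \pi^{-1}(U) = \{\mathrm{re}(w) = \lambda_z\} \times L_M$ for a fixed $L_M \subset M$ (obtained from $\mathit{TE}^h$-parallel transport), $\theta_E = \theta_W + \theta_M + d\phi$, and $K(\xi)$ is the pullback of $H_{A(\xi)}$. Take $\tilde\xi$ to be the horizontal lift with vanishing $M$-component. Writing the affine generator as $X_{A(\xi)} = (a(\xi) + b(\xi)s)\,\partial_s + b(\xi)\,t\,\partial_t$ in coordinates $w = s + it$, condition \eqref{eq:a-lambda-2} (covariant constancy of $\lambda$) identifies $a(\xi) + b(\xi)\lambda_z$ with $d\lambda_z/d\xi =: \mu(\xi,z)$. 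A direct calculation at points of $\Lambda$ with $\mathrm{re}(w) = \lambda_z$ then gives $\theta_E(\tilde\xi) = \mu(\xi,z)/t + \mu(\xi,z)\,\phi_s$, $(K|_\Lambda)(\tilde\xi) = H_{A(\xi)}(\lambda_z,t) = \mu(\xi,z)/t$, and, writing $P_K|_{L_z \cap \pi^{-1}(U)} = f_M + \phi|_{L_z} + c(z)$ with $df_M = \theta_M|_{L_M}$ and $c$ smooth in $z$, $dP_K(\tilde\xi) = \mu(\xi,z)\,\phi_s + c'(z)$. The $\mu/t$ contributions cancel between $\theta_E$ and $K|_\Lambda$, while the $\mu\phi_s$ contributions cancel between $\theta_E$ and $dP_K$, leaving $\beta_K(\tilde\xi) = -c'(z)$, which is bounded on the compact support of $\xi$.

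The main point to verify is the precise cancellation of the singular terms $\mu(\xi,z)/t$, which is the Hamiltonian manifestation of \eqref{eq:a-lambda-2}: the motion of the Lagrangian boundary datum $\lambda_z$ along $\partial S$ is generated, on the $W$-factor of the trivialization, by the same affine vector field whose Hamiltonian is $K(\xi) = H_{A(\xi)} \circ \pi$. The only remaining input is the existence of a globally smooth $P_K$ (so that $c(z)$ is a smooth function of $z$, hence $c'(z)$ is bounded on compact subsets of $\partial S$); this follows because the fibrewise primitive is unique up to an additive constant on each $L_z$ and the family $L_z$ varies smoothly with $z$.
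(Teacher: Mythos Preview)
Your computation isolates the right mechanism --- the singular contribution $\mu(\xi,z)/t$ coming from $\theta_W$ is exactly cancelled by $H_{A(\xi)}(\lambda_z+it)$, thanks to covariant constancy of $\lambda$ --- and this is indeed the heart of the matter. However, there is a genuine gap: you assume that in the trivialization \eqref{eq:horizontal-collar} one has $L_z \cap \pi^{-1}(U) = \{\mathrm{re}(w)=\lambda_z\}\times L_M$ for a \emph{fixed} $L_M$ independent of $z$. Nothing in \eqref{eq:data-3} forces this; the fibrewise Lagrangian $L_{M,z}$ may vary with $z$, and the paper's own proof explicitly allows that. Once $L_{M,z}$ moves, a lift $\tilde\xi$ of $\xi$ tangent to $\Lambda$ cannot have vanishing $M$-component, so your choice of $\tilde\xi$ is not available, and the terms $\theta_M(\tilde\xi)$, $d_M\phi(\tilde\xi)$, and $\partial_z f_{M,z}$ all enter the balance sheet.

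The repair is easy and does not disturb your main cancellation: the additional terms live entirely in the compact fibre $M$ and are independent of $t$, hence bounded. You should say this. Once you do, your argument is a direct-computation variant of the paper's proof. The paper instead invokes the identity \eqref{eq:beta-k-2}, $\beta_K = \alpha - K|_\Lambda + (\text{pullback from }\partial S)$, and observes that near infinity this splits as a $W$-piece plus an $M$-piece; the $W$-piece is exactly $\theta_A|_\Lambda$ from \eqref{eq:a-primitive}, already shown to vanish in Section~\ref{sec:affine}, while the $M$-piece is bounded for compactness reasons. Your approach trades that structural reduction for an explicit coordinate computation --- more hands-on, but it makes the cancellation $\theta_W(\mu\partial_s) = H_{A(\xi)}$ visible rather than packaging it inside $\theta_A|_\Lambda = 0$.
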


\begin{proof}
Note that this is a statement about the behaviour near $\partial_\infty L_z = \bar{L}_z \cap \partial_\infty E$, where the geometry is governed by \eqref{eq:vertical-collar}. This allows us to reduce considerations to the case of a product fibration $E = W \times M$, and where
\begin{equation}
L_z = \{\mathrm{re}(w) = \lambda_z, \;\; 0 < \mathrm{im}(w) \ll 1\} \times L_{M,z},
\end{equation}
for some family of closed exact Lagrangian submanifolds $L_{M,z} \subset M \setminus \partial M$, with corresponding functions $\theta_M|L_{M,z} = dP_{M,z}$; and where $K$ is just given by the pullback of $H_A$. As a consequence, one can write \eqref{eq:beta-k-2} as the sum of two terms, one being \eqref{eq:a-primitive}, and the other coming from the fibre $M$. As observed in Section \ref{sec:affine}, the first term vanishes, leaving the fibre contribution, which is independent of the $W$-direction, hence necessarily bounded. 
\end{proof}

The connection also determines an almost complex structure $J_K$ on $S \times E$, which is such that projection to $S$ is pseudo-holomorphic, and $\Lambda$ a totally real submanifold. Both the connection and $J_K$ extend to $\bar{E}$, and $\bar\Lambda \subset \bar{E} \times S$ is a submanifold with boundary. Returning to our main topic, solutions of \eqref{eq:main-cr} can be viewed as $J_K$-holomorphic sections $v = (z,u(z))$ of \eqref{eq:e-trivial}, with boundary conditions given by $\Lambda$. One can rewrite the geometric energy \eqref{eq:e-geom} in these terms, and also introduce its topological cousin:
\begin{align} 
\label{eq:e-geom-2}
&
E^{\mathit{geom}}(u) = \int_S v^*\omega_K^{\mathit{geom}}, \\
\label{eq:e-top-2}
&
E^{\mathit{top}}(u) = \int_S v^*\omega_K^{\mathit{top}} - \int_{\partial S} v^*\beta_K.
\end{align}
Clearly, the relation between the two is that 
\begin{equation} \label{eq:e-e}
E^{\mathit{geom}}(u) = E^{\mathit{top}}(u) - \int_S v^*R_K + \int_{\partial S} v^*\beta_K.
\end{equation}

\begin{example} \label{th:s-is-compact-2}
In the toy model case where $S$ is compact, we have
\begin{equation}
E^{\mathit{top}}(u) = \int_S v^*d\theta_K^{\mathit{top}} - \int_{\partial S} v^*\theta_K^{\mathit{top}} +
\int_{\partial S} v^*dP_K = 0.
\end{equation}
Because $R_K$ vanishes outside a compact subset of $E \setminus \partial E$, and the last term in \eqref{eq:e-e} is bounded by Lemma \ref{th:locally-bounded-2}, we get a bound on the geometric energy of solutions.
\end{example}

\subsection{Local compactness} 
We will now consider ``containment methods'' which keep solutions of \eqref{eq:main-cr} from either reaching the boundary (in fibre direction), or going to infinity (over the base); the arguments for the latter and more important issue are modelled on those in Sections \ref{subsec:maps} and \ref{subsec:maps-2}.

\begin{lemma} \label{th:convex}
Suppose that $\partial S \neq \emptyset$. Then, no solution of \eqref{eq:main-cr} can reach $\partial E$.
\end{lemma}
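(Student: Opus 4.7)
The plan is to use the standard integrable/product structure of $E$ near $\partial E$, combined with the weak convexity assumption \eqref{eq:weakly-convex} on the fibre, to run an open-and-closed argument on the bad set $Z = u^{-1}(\partial E)$.

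First I would take an interior point $z_0 \in S$ with $u(z_0) \in \partial E$, and work in a small neighbourhood $U \subset S$ of $z_0$ on which $u(U)$ lands entirely inside the collar \eqref{eq:vertical-collar}, so we may regard $u$ as a map to $W \times \{\text{nbd of } \partial M\}$. Since $J_z$ is the split product $j \oplus J_M$ there, and since $K(\xi) \in \scrH_{A(\xi)}(E)$ is pulled back from $W$ on this region, the Hamiltonian vector field $X_K$ is purely horizontal. Writing $u = (u_W, u_M)$ and taking the vertical component of the equation $(Du - X_K)^{0,1} = 0$, the horizontal piece $X_K$ drops out, and one obtains
\begin{equation}
(Du_M)^{0,1} = 0
\end{equation}
with respect to $J_M$. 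Thus $u_M : U \to M$ is an honest $J_M$-holomorphic map, and $u_M(z_0) \in \partial M$.

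Next I would invoke \eqref{eq:weakly-convex}: since $u_M$ is $J_M$-holomorphic and touches $\partial M$ at $z_0$, it must be entirely contained in $\partial M$ throughout $U$ (shrinking $U$ if necessary so that its image lies in one connected component of the collar). Consequently $u(U) \subset \partial E$, which shows that $Z$ is open. It is obviously closed by continuity of $u$, and $S$ (a boundary-punctured disc) is connected, so either $Z = \emptyset$ or $Z = S$.

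The finishing step uses the hypothesis $\partial S \neq \emptyset$: by \eqref{eq:lag}, every Lagrangian $L_z$ is disjoint from $\partial E$, hence $u(\partial S) \cap \partial E = \emptyset$, so $Z \neq S$. Therefore $Z = \emptyset$. The only subtle ingredient is the separation of horizontal and vertical components of the Cauchy-Riemann equation near $\partial E$; the rest is a standard unique-continuation-via-convexity argument. There are no genuine obstacles, since the compatibility of $J$ with the product structure, the horizontality of $X_K$, and the weak convexity of $\partial M$ were precisely built into the definitions of $\scrJ(E)$, $\scrH_\gamma(E)$, and the class of allowed fibres.
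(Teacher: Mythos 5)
Your proof is correct and is essentially the paper's argument: project $u$ near $\partial E$ via the collar \eqref{eq:vertical-collar} to a $J_M$-holomorphic map (using the product form of $J$ and the horizontality of $X_K$ there), invoke weak convexity \eqref{eq:weakly-convex}, and conclude by an open-closed argument that $u^{-1}(\partial E)$ would be all of $S$, contradicting the Lagrangian boundary condition \eqref{eq:lag}. Your write-up merely makes explicit the vertical/horizontal splitting and the connectedness step that the paper leaves implicit.
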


\begin{proof}
Whenever $u(z)$ is close to $\partial E$ (in which case $z$ is necessarily an interior point), we can use \eqref{eq:vertical-collar} to project it to a map to $M$, which is $J_M$-holomorphic. By \eqref{eq:weakly-convex}, it follows that if $z$ intersects $\partial E$, it must be entirely contained in it, which contradicts the boundary condition.
\end{proof}

\begin{lemma} \label{th:gromov-1}
Let $u_k$ be a sequence of solution of \eqref{eq:main-cr}, such that on each relatively compact open subset $T \subset S$, the energy $E^{\mathit{geom}}(u_k|T)$ is bounded. Then the pointwise norm $\|(Du_k - X_K)|T\|_{\bar{E}}$ is also bounded.
\end{lemma}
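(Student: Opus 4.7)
The plan is a standard Hofer rescaling / bubbling argument adapted to the compactification $\bar E$, exploiting that $\omega_{\bar E}$ is exact and that the $\bar L$ are exact Lagrangians in $\bar E$.

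Suppose for contradiction that the conclusion fails, so on some relatively compact $T \subset S$ one has a subsequence (still denoted $u_k$) and points $z_k \in T$ with $R_k := \|(Du_k - X_K)(z_k)\|_{\bar E} \to \infty$. Since $X_K$ extends smoothly via $\bar X$ to the compact space $\bar E$, it is bounded in $\bar E$-norm on compact subsets of $S$, so the blow-up comes from $Du_k$ itself. Passing to subsequences, $z_k \to z_\infty \in \bar T$ and, by compactness of $\bar E$ together with Lemma~\ref{th:convex}, $u_k(z_k) \to x_\infty \in \bar E \setminus \partial E$. I then invoke Hofer's Lemma (as cited in the proof of Lemma~\ref{th:hofer}) in the $\bar E$ metric to reposition to centers $z_k' \to z_\infty$ with $R_k' := \|(Du_k - X_K)(z_k')\|_{\bar E} \to \infty$ and such that $\|Du_k - X_K\|_{\bar E} \leq 2 R_k'$ on a $1/R_k'$-disc around $z_k'$.

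Next, rescale $\tilde u_k(w) := u_k(z_k' + w/R_k')$, defined on larger and larger discs in $\bC$ (or half-discs if $z_\infty \in \partial S$, with boundary mapping into $\bar L_{z_\infty}$). The inhomogeneous term becomes $X_K/R_k' \to 0$ uniformly on compact subsets, so in the limit the equation \eqref{eq:main-cr} reduces to the honest $\bar J_{z_\infty}$-holomorphic equation. Standard elliptic bootstrapping, carried out in charts on the compact manifold $\bar E$ and using that $\bar L_{z_\infty}$ is smooth and $\bar J_{z_\infty}$-totally real, yields a $C^\infty_{\mathit{loc}}$-convergent subsequence to a $\bar J_{z_\infty}$-holomorphic $\tilde u_\infty : \bC \to \bar E$ (or $\bH \to \bar E$ with boundary on $\bar L_{z_\infty}$), which is nonconstant because $\|D\tilde u_\infty(0)\|_{\bar E} = 1$.

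The hypothesized bound on $E^{\mathit{geom}}(u_k|T)$ translates, via \eqref{eq:compare-norms} and the observation that $\omega_{\bar E} - \omega_E$ is the pullback of a bounded form on $\bar W$, into a uniform bound on the $\omega_{\bar E}$-area of $\tilde u_k$ on compact subsets. Choosing $\psi$ in \eqref{eq:modified} so that $\bar J_{z_\infty}$ is $\omega_{\bar E}$-compatible and $\bar L_{z_\infty}$ is $\omega_{\bar E}$-Lagrangian, the removable singularity theorem extends $\tilde u_\infty$ to a nonconstant $\bar J_{z_\infty}$-holomorphic sphere $\bP^1 \to \bar E$ or disc $\bar D \to \bar E$ with boundary on $\bar L_{z_\infty}$, of strictly positive $\omega_{\bar E}$-area. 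But $\omega_{\bar E}$ is exact, and $\bar L_{z_\infty}$ remains exact with respect to a primitive of $\omega_{\bar E}$ extending $\theta_E$, so Stokes forces the $\omega_{\bar E}$-area to vanish. Contradiction.

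The main obstacle is the energy conversion during rescaling: one must verify that the $E$-energy bound upstairs really controls the $\omega_{\bar E}$-area of the bubble after passing to $\bar E$, where the symplectic form has been modified by a pullback from $\bar W$. Once that (and the compatibility and exactness built into the choice of $\psi$) is in hand, the remainder is routine pseudo-holomorphic bubbling.
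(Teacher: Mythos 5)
Your proposal is correct and follows essentially the same route as the paper: translate the energy bound to the $\bar{E}$-metric via \eqref{eq:compare-norms}, apply Hofer rescaling to produce a nonconstant $\bar{J}_{z_\infty}$-holomorphic plane (or half-plane with boundary on $\bar{L}_{z_\infty}$), remove the singularity, and exclude the resulting sphere or disc by exactness of $\omega_{\bar{E}}$. The only cosmetic difference is in ruling out the disc: the paper phrases this as vanishing of the relative class $[\omega_{\bar{E}}] \in H^2(\bar{E},\bar{L}_{z_\infty};\bR)$ (checked by restricting to $E$), which is cleaner than your appeal to a primitive of $\omega_{\bar{E}}$ extending $\theta_E$, but the content is the same.
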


\begin{proof}
By \eqref{eq:compare-norms}, we get a bound on $\int_{T} \|Du_k - X_K\|^2_{\bar{E}}$. Since the vector fields $X_K$ extend to $\bar{E}$, they are bounded in any metric there, so the outcome is that we have a bound on $\int_T \| Du_k\|^2_{\bar{E}}$. Suppose that we have a sequence $z_k \rightarrow z_\infty \in S$, for which $\|D u_k(z_k)\|_{\bar{E}}$ goes to $\infty$. The same rescaling argument as in the proof of Lemma \ref{th:hofer} would then lead to one of the following:
\begin{align}
& \label{eq:no-sphere}
\parbox{36em}{A non-constant $\bar{J}_{z_\infty}$-holomorphic map $u: \bC \rightarrow \bar{E}$, with $\int \|Du\|^2_{\bar{E}} < \infty$.}
\\ & \label{eq:no-disc}
\parbox{36em}{Assuming $z_\infty \in \partial S$: a non-constant map from the upper half-plane to $\bar{E}$, with boundary conditions on $\bar{L}_{z_\infty}$, with the same properties as before.
}
\end{align}
Recall that $\bar{E}$ carries a compatible symplectic form as in \eqref{eq:modified}. We can use removal of singularities for pseudo-holomorphic maps, and the exactness of that form, to rule out \eqref{eq:no-sphere}. The same applies to \eqref{eq:no-disc}, since the relative class $[\omega_{\bar{E}}] \in H^2(\bar{E},\bar{L}_{z_\infty};\bR)$ is also zero, as restriction to $E$ shows.
\end{proof}

\begin{remark}
Even though this is not necessary for our purpose, it may be of interest to note that one can upgrade the bound in Lemma \ref{th:gromov-1} from $\|\cdot\|_{\bar{E}}$ to the stronger norm $\|\cdot\|_E$. Namely, suppose the opposite is true, meaning that we have a sequence $z_k \rightarrow z_\infty$, for which $\|(Du_k - X_K)(z_k)\|_E$ goes to $\infty$. Since we already have a bound on $\|Du_k(z_k) - X_K\|_{\bar{E}}$, $u_k(z_k)$ must go to $\partial_\infty E$. It also follows that there is a neighbourhood $T$ of $z_\infty$ such that for all $k \gg 0$, $u_k|T$ lies close to $\partial_\infty E$. Then, $\pi(u_k|T)$ is a solution of an equation \eqref{eq:u-equation}. From Lemma \ref{th:schwarz} we get explicit bounds on $\|D\pi(u_k - X_K)\|_W$ at any point of $T$. These can be also thought of as bounds on the $\mathit{TE}^h$ component of $u_k - X_K$. Since the $\mathit{TE}^v$ component is bounded by our previous argument, we obtain a contradiction.
\end{remark}

\begin{lemma} \label{th:gromov-2}
Let $u_k$ be as in Lemma \ref{th:gromov-1}. Suppose that there is a sequence of points $z_k$, contained in a compact subset of $S$, such that $u_k(z_k) \rightarrow \partial_\infty E$. Then $u_k \rightarrow \partial_\infty E$ uniformly on compact subsets. Moreover, a subsequence converges (in the same sense) to a map $u_\infty: S \rightarrow \partial_\infty E$ such that
\begin{equation} \label{eq:proj-boundary}
\left\{
\begin{aligned}
& D(\pi(u_\infty)) = \bar{X}_A, \\
& \pi(u_\infty(z)) = \lambda_z \quad \text{for $z \in \partial S$.}
\end{aligned}
\right.
\end{equation}
\end{lemma}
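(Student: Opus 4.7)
The strategy is to mirror the open-and-closed argument from Lemma \ref{th:compactness-0}, with the new ingredient being that the projection $\pi$ converts the relevant local question into one about maps into $\bar W$, where that earlier lemma applies directly.

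First I would use Lemma \ref{th:gromov-1} (together with the fact that $X_K$ extends to a vector field on $\bar E$, hence is bounded in any Riemannian metric on $\bar E$) to conclude that $\|Du_k\|_{\bar E}$ is uniformly bounded on every relatively compact $T \subset S$. By Arzel\`a--Ascoli, a subsequence $u_{k_j}$ converges uniformly on compact subsets to some continuous $u_\infty : S \to \bar E$; after passing to a further subsequence we may assume $z_{k_j} \to z_\infty$, so that $u_\infty(z_\infty) \in \partial_\infty E$.

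The heart of the argument is then the claim that $Z = \{z \in S : u_\infty(z) \in \partial_\infty E\}$ is open in $S$; it is closed by continuity and non-empty by the previous step. Fix $z_0 \in Z$. Because $\partial_\infty E = \bar\pi^{-1}(\partial_\infty W)$ and $\bar\pi$ is continuous, I can pick a neighbourhood $T$ of $z_0$ on which $u_\infty(T)$, and hence $u_{k_j}(T)$ for $j \gg 0$, lies outside $\pi^{-1}(V_0)$ for a compact $V_0 \subset W$ large enough that the defining conditions of $\scrJ(E)$ and $\scrH_{A(\xi)}(E)$ are in force. On that region $D\pi$ is $J$-holomorphic and $X_K(\xi)$ is the horizontal lift of $X_{A(\xi)}$, so pushing \eqref{eq:main-cr} forward by $D\pi$ shows that $\pi \circ u_{k_j}|T$ solves \eqref{eq:u-equation} with the data $(A|T, \lambda|\partial T\cap\partial S)$; the boundary condition $\mathrm{re}(\pi \circ u_{k_j})=\lambda$ is built into the structure of $L_z$ near infinity from \eqref{eq:lag-2}. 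I then apply Lemma \ref{th:compactness-0} to this sequence on $T$: its hypotheses are satisfied because $\pi(u_{k_j}(z_0)) \to \bar\pi(u_\infty(z_0)) \in \partial_\infty W$, and the conclusion gives $\pi(u_\infty)|T \subset \partial_\infty W$, hence $u_\infty(T) \subset \partial_\infty E$. Thus $Z$ is open, and connectedness of $S$ forces $Z = S$. The identity \eqref{eq:proj-boundary} for $u_\infty$ is then just the limiting equation produced by Lemma \ref{th:compactness-0} applied to $\pi \circ u_{k_j}$. To upgrade to convergence of the full sequence, I would use the standard device: every subsequence of $(u_k)$ again satisfies the hypotheses of the present lemma, so has a further subsequence converging to $\partial_\infty E$ uniformly on compact subsets.

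The step I expect to demand the most care is the push-forward computation just described: verifying, in the region outside a large preimage in $W$, that \eqref{eq:main-cr} for $u_{k_j}$ really does project cleanly to \eqref{eq:u-equation} for $\pi \circ u_{k_j}$ with the advertised data and boundary condition. This is where the asymptotic product structure \eqref{eq:horizontal-collar}, the definitions of $\scrJ(E)$ and $\scrH_\gamma(E)$, and the asymptotic form of the Lagrangians from \eqref{eq:lag-2} all have to cooperate; once this is in place, the appeal to Lemma \ref{th:compactness-0} becomes essentially a black-box invocation.
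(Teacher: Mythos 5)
Your proposal is correct and follows essentially the same route as the paper: extract a subsequence converging on compact subsets via Lemma \ref{th:gromov-1}, observe that near a point where the limit touches $\partial_\infty E$ the projection to the base turns \eqref{eq:main-cr} into \eqref{eq:u-equation}, run the open--closed argument of Lemma \ref{th:compactness-0}, and recover the full sequence by the subsequence trick. The only (harmless) difference is that you invoke Lemma \ref{th:compactness-0} as a black box applied to the projected sequence $\pi\circ u_{k_j}$ on small neighbourhoods, whereas the paper reuses the internal argument of that lemma applied directly to the limit $\pi(u_\infty)$ viewed as a $\bar{W}$-valued solution.
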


\begin{proof}
Convergence of a subsequence follows from Lemma \ref{th:gromov-1}. The limit $u_\infty$ satisfies the same equation as in \eqref{eq:main-cr} for the extended data $\bar{J}_z$ and $\bar{L}_z$. By assumption, there is a point $z_\infty$ such that $u_\infty(z_\infty) \in \partial_\infty E$. Near that point, $\pi(u_\infty): S \rightarrow \bar{W}$ is a solution of \eqref{eq:u-equation}. By the same argument as in Lemma \ref{th:compactness-0}, this implies that $u_\infty^{-1}(\partial_\infty E)$ is open and closed, hence all of $S$; it also follows that $\pi(u_\infty)$ satisfies \eqref{eq:proj-boundary}. By applying the same argument to subsequences, we get convergence $u_k \rightarrow \partial_\infty E$ for the original sequence. 
\end{proof}


\subsection{Strip-like ends}
Suppose that we are given $(A = a_t \mathit{dt}, \lambda_0, \lambda_1) \in \scrP_{\mathit{aff}}([0,1])$. Additionally, take a time-dependent function $H = (H_t)$ with $H_t \in \scrH_{a_t}(E)$, and two Lagrangian submanifolds $(L_0,L_1)$ whose behaviour at infinity \eqref{eq:lag-2} satisfies $\lambda_{L_k} = \lambda_k$. Let $\phi$ be the time-one map of the time-dependent Hamiltonian vector field $X_t$ of $H_t$, and set
\begin{equation}
L_{1}^\dag = \phi^{-1}(L_{1}). 
\end{equation}
Then, $\lambda_{L_1^\dag} = \lambda_1^\dag$, in the notation of \eqref{eq:to-the-right}. It follows from the definition of $\scrP_{\mathit{aff}}([0,1])$ that $L_0 \cap L_1^\dag$ must be compact. We additionally assume the following (which is true for generic choice of Hamiltonian):
\begin{equation} \label{eq:transverse-intersection}
\parbox{36em}{$L_0 \cap L_1^\dag$ is transverse.}
\end{equation}
Let $\scrC(H,L_0,L_1)$ be the set of $X_t$-chords connecting our Lagrangian submanifolds:
\begin{equation}
\left\{
\begin{aligned}
& x: [0,1] \longrightarrow E, \\
& dx/dt = X_t, \\
& x(0) \in L_0, \;\; x(1) \in L_1.
\end{aligned}
\right.
\end{equation}
These correspond bijectively to points $x(0) \in L_0 \cap L_1^\dag$. Hence, under our assumption, $\scrC(H,L_0,L_1)$ is finite. Given functions $P_{L_j} \in \smooth(L_j,\bR)$ with $dP_{L_j} = \theta_E|L_j$, we define the action to be
\begin{equation} \label{eq:action}
A_H(x) = \int_{[0,1]} -x^*\theta_E + H_t(x(t)) \mathit{dt} \, + P_{L_1}(x(1)) - P_{L_0}(x(0)).  
\end{equation}

Take a boundary-punctured disc $S$, with ends \eqref{eq:ends}. Suppose that for each end, we have chosen
$(A_j = a_{j,t} \mathit{dt},\lambda_{j,0}, \lambda_{j,1}) \in \scrP_{\mathit{aff}}([0,1])$, and on the surface itself, an $(A,\lambda) \in \scrP_{\mathit{aff}}(S,\Sigma)$. Additionally we choose, for each end, a pair $(L_{j,0},L_{j,1})$ of Lagrangian submanifolds, whose behaviour at infinity is governed by $(\lambda_{j,0},\lambda_{j,1})$; as well as
\begin{align}
& \label{eq:floer-1}
J_j = (J_{j,t}), \quad J_{j,t} \in \scrJ(E), \\ 
& \label{eq:floer-2}
H_j = (H_{j,t}), \quad H_{j,t} \in \scrH_{a_{j,t}}(E),
\end{align}
where the latter satisfies the transverse intersection condition \eqref{eq:transverse-intersection}. On $S$, we choose $(J,K,L)$ as in \eqref{eq:data-1}--\eqref{eq:data-3}, which are compatible with the choices made over the ends, in the following sense:
\begin{align}
& \label{eq:match-1}
\parbox[t]{36em}{As $s \rightarrow \pm\infty$, $J_{\epsilon_j(s,t)} \rightarrow J_{j,t}$ exponentially fast (in any $C^r$ topology). Moreover, there is a compact subset of $E \setminus \partial E$ such that $J_{\epsilon_j(s,t)} = J_{j,t}$ outside that subset.}
\\
& \label{eq:match-2}
\parbox{36em}{$\epsilon_j^*K = H_{j,t} \mathit{dt}$.}
\\
\label{eq:match-3} &
\parbox{36em}{$L_{\epsilon_j(s,0)} = L_{j,0}$, $L_{\epsilon_j(s,1)} = L_{j,1}$.}
\end{align}

\begin{remark}
We have chosen to impose asymptotic conditions in \eqref{eq:match-1}, rather than strict equality, since that makes transversality arguments easier (compare e.g.\ \cite[Lemma 9.8]{seidel17}), while still allowing for the standard gluing constructions. For a more systematic approach, one could extend that idea to \eqref{eq:match-2} and \eqref{eq:match-3}, relaxing the conditions there to asymptotic ones; but it seems that in practice, nothing would be gained by that.
\end{remark}

Given this, we consider solutions of \eqref{eq:main-cr} with limits
\begin{equation} \label{eq:with-limits}
\textstyle{\lim_{s \rightarrow \pm\infty}} u(\epsilon_j(s,\cdot)) = x_j \in \scrC(H_j,L_{j,0},L_{j,1}).
\end{equation}
One can choose $P_K$, restricted to $\epsilon_j(\cdot,k)$ ($k = 0,1$) to be independent of $s$, which means that it is just given by a primitive $P_{L_{j,k}}$ of $\theta_E|L_{j,k}$. Using those primitives to define the actions \eqref{eq:action}, one then finds that by Stokes, 
\begin{equation} \label{eq:action-energy}
E^{\mathit{top}}(u) = A_{H_0}(x_0) - \sum_{j=1}^d A_{H_j}(x_j).
\end{equation}

\begin{lemma} \label{th:compactness-1}
There is a bound on the geometric energy $E^{\mathit{geom}}(u)$ of solutions of \eqref{eq:main-cr}, \eqref{eq:with-limits}.
\end{lemma}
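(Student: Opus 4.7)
The plan is to apply the energy identity \eqref{eq:e-e},
$$ E^{\mathit{geom}}(u) = E^{\mathit{top}}(u) - \int_S v^*R_K + \int_{\partial S} v^*\beta_K, $$
and uniformly bound each of the three terms on the right. Once this is done, the statement follows immediately.

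The first term is controlled by the action identity \eqref{eq:action-energy}. Since the sets $\scrC(H_j,L_{j,0},L_{j,1})$ are finite (by the transverse intersection assumption \eqref{eq:transverse-intersection}, together with the compactness of $L_0 \cap L_1^\dag$ coming from the condition \eqref{eq:to-the-right}), the right hand side of \eqref{eq:action-energy} takes only finitely many values, giving an a priori bound on $E^{\mathit{top}}(u)$.

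For the curvature integral I would show that $R_K$ has compact support in $S \times E$. In the $E$-direction this is Lemma \ref{th:rk-compact}. In the $S$-direction, the matching condition \eqref{eq:match-2} gives $\epsilon_j^*K = H_{j,t}\,\mathit{dt}$, which has no $\mathit{ds}$-component and no $s$-dependence, so $\epsilon_j^*R_K = 0$ and $R_K$ vanishes outside a compact subset of $S$. Hence $\int_S v^*R_K$ is bounded in absolute value by $\sup|R_K|$ times the area of that compact set.

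For the boundary term I would argue similarly, and then invoke Lemma \ref{th:locally-bounded-2}. Along the boundary portions of the strip-like ends, the Lagrangians $L_{j,0}$, $L_{j,1}$ are constant in $s$, so $\alpha$ in \eqref{eq:beta-k-2} vanishes; and $K|\partial S = 0$ along the ends because $\mathit{dt}$ annihilates $\partial_s$ on $\{t = 0, 1\}$. By choosing $P_K$ to be $s$-independent on each end (as was already done just before \eqref{eq:action-energy}), the remaining pulled-back-from-$\partial S$ term in \eqref{eq:beta-k-2} can also be arranged to vanish along the ends. Thus $v^*\beta_K$ is effectively supported on a compact portion of $\partial S$, and on that compact portion Lemma \ref{th:locally-bounded-2}, combined with the vanishing of $\beta_K$ in fibre directions (so that only the horizontal component of $v_*T(\partial S)$ matters), yields a pointwise bound on $v^*\beta_K$ that is independent of $u$. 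The main subtlety is precisely this last point: $u|\partial S$ can escape into the non-compact vertical tail of the $L_z$, and it is crucial that Lemma \ref{th:locally-bounded-2} provides a bound uniform along the entire fibre $L_z$, not just on its compact part.
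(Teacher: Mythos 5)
Your proposal is correct and is essentially the paper's own argument: decompose via \eqref{eq:e-e}, bound $E^{\mathit{top}}$ through \eqref{eq:action-energy} and the finiteness of the chord sets, note that \eqref{eq:match-2} together with Lemma \ref{th:rk-compact} makes $R_K$ a compactly supported, bounded term, and observe that $\beta_K$ vanishes along the ends so that Lemma \ref{th:locally-bounded-2} bounds the boundary integral. Your added emphasis that the bound from Lemma \ref{th:locally-bounded-2} must be uniform along the non-compact part of each $L_z$ is exactly the point that lemma is there to supply, so nothing further is needed.
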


\begin{proof}
The condition \eqref{eq:match-2} implies that $R_K$ vanishes over the strip-like ends. Together with Lemma \ref{th:rk-compact}, it follows that $R_K$ is a compactly supported two-form on $S$, taking values in functions on $E$ that are bounded. Hence, we get an upper bound on its integral over any section. Similarly, $\beta_K$ vanishes on the ends, which together with Lemma \ref{th:locally-bounded-2} yields a bound on its integral. In view of \eqref{eq:e-e}, the bound on the topological energy from \eqref{eq:action-energy} now implies the desired result.
\end{proof}

\begin{lemma} \label{th:compactness-2}
There is a compact subset of $E \setminus \partial E$ which contains all solutions of \eqref{eq:main-cr}, \eqref{eq:with-limits}.
\end{lemma}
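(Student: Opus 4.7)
The plan is to separately prevent escape to the fibre boundary $\partial E$ and to the horizon $\partial_\infty E$. The former is a standard weak-convexity argument; the latter is the content of this lemma, and closes via a direct confrontation with the defining inequality~\eqref{eq:to-the-right}.

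Bounding away from $\partial E$ is a uniform version of Lemma~\ref{th:convex}. Were there a sequence $u_k$ of solutions with $u_k(z_k)\to\partial E$, the energy bound from Lemma~\ref{th:compactness-1} together with the gradient bound from Lemma~\ref{th:gromov-1} would produce a subsequential limit $u_\infty$ whose image meets $\partial E$ at some interior point. In the collar~\eqref{eq:vertical-collar}, where both $J$ and $K$ are of product type, the fibre projection of $u_\infty$ is $J_M$-holomorphic into a collar of $\partial M$, and weak convexity~\eqref{eq:weakly-convex} forces it into $\partial M$; iterating gives $u_\infty\subset\partial E$ globally, contradicting the chord asymptotics $u_\infty(\epsilon_j(s,\cdot))\to x_j\in E\setminus\partial E$.

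For the main part, suppose for contradiction that $u_k(z_k)\to\partial_\infty E$ for some sequence. Up to subsequence, either $z_k$ stays in a compact subset of $S$ (Case A), or $z_k=\epsilon_j(s_k,t_k)$ with $|s_k|\to\infty$ for some end $j$ (Case B). In Case A, Lemma~\ref{th:gromov-2} yields a subsequential limit $u_\infty:S\to\partial_\infty E$ with $\pi(u_\infty)$ satisfying $D\pi(u_\infty)=\bar X_A$ and $\pi(u_\infty)|\partial S=\lambda$. I would then restrict to any end, where $A=A_j=a_{j,t}\,dt$ has no $ds$-component: the equation reduces to $\partial_s\pi(u_\infty)=0$ together with the $t$-ODE $\partial_t\pi(u_\infty)=X_{a_{j,t}}(\pi(u_\infty))$, with boundary values $\lambda_{j,0}$ at $t=0$ and $\lambda_{j,1}$ at $t=1$. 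Integrating the $t$-ODE with initial value $\lambda_{j,0}$ and invoking that $g_j\in G_\aff$ is the parallel transport of $A_j$ forces $g_j(\lambda_{j,0})=\lambda_{j,1}$, equivalently $\lambda_{j,0}=\lambda_{j,1}^\dag$; this directly contradicts the strict inequality~\eqref{eq:to-the-right} built into $\scrP_\aff([0,1])$. Case B reduces to the same contradiction by translation: set $v_k(s,t)=u_k(\epsilon_j(s+s_k,t))$ on the progressively larger domains $[-s_k,\infty)\times[0,1]$, so that $v_k(0,t_k)\to\partial_\infty E$ with $(0,t_k)$ in the fixed compact set $\{0\}\times[0,1]$; applying Lemma~\ref{th:gromov-2} on each finite substrip yields a limit $v_\infty:\bR\times[0,1]\to\partial_\infty E$ with $\pi(v_\infty)$ satisfying the same ODE, and the same contradiction ensues.

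The hard part will be spotting that the contradiction is purely structural, coming from the inequality $\lambda_{j,0}>\lambda_{j,1}^\dag$ built into $\scrP_\aff([0,1])$, rather than requiring a delicate quantitative energy estimate at infinity. Once that observation is in place, the rest is routine assembly of the compactness results from Lemmas~\ref{th:convex}, \ref{th:gromov-1}, \ref{th:gromov-2}, and~\ref{th:compactness-1}.
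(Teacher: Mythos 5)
Your argument is correct and follows essentially the same route as the paper: rule out $\partial E$ via the weak-convexity argument of Lemma \ref{th:convex}, then rule out escape to $\partial_\infty E$ by splitting into the case of $z_k$ converging in $S$ (apply Lemma \ref{th:gromov-2} and restrict to an end) and the case of $z_k$ drifting into an end (translate and repeat), with the contradiction in both cases being that the limiting map forces $\lambda_{j,0}=\lambda_{j,1}^\dag$, violating \eqref{eq:to-the-right}. The only cosmetic difference is that the paper disposes of $\partial E$ by noting a fixed neighbourhood no solution can enter, and in your limiting version the cleaner contradiction is with the Lagrangian boundary condition (which is disjoint from $\partial E$) rather than with the chord asymptotics, since asymptotics need not survive convergence on compact subsets.
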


\begin{proof}
Suppose that the opposite is true. Inspection of the proof of Lemma \ref{th:convex} shows that there is a neighbourhood of $\partial E$ which no solution can enter. Hence, we must then have a sequence of solutions $u_k$ and points $z_k \in S$ such that $u_k(z_k) \rightarrow \partial_\infty E$.

If $z_k$ has a convergent subsequence, Lemma \ref{th:gromov-2} implies the existence of a map $u_\infty: S \rightarrow \partial_\infty E$ satisfying \eqref{eq:proj-boundary}. In that case, $v_\infty(s,t) = \pi(u_\infty(\epsilon_j(s,t)))$ (for any choice of $j$) is a map taking values in $\partial_\infty W$, and such that
\begin{equation} \label{eq:x-contradiction}
\left\{
\begin{aligned}
& \partial_s v_\infty = 0, \\
& \partial_t v_\infty = \bar{X}_{a_j(t)}, \\
& v_\infty(s,0) = \lambda_{j,0}, \quad v_\infty(s,1) = \lambda_{j,1}.
\end{aligned}
\right.
\end{equation}
The existence of such a map would mean that $\lambda_{j,0} = \lambda_{j,1}^\dag$, which is a contradiction.

The other possibility is that, after passing to a subsequence, we have $z_k = \epsilon_j(s_j,t_j)$ for some $j$, and where $\pm s_j \rightarrow \infty$. In that case, we can consider the shifted sequence $\tilde{u}_k(s,t) = u_k(\epsilon(s+s_j,t))$. On any compact subset of $\bR \times [0,1]$, these maps (for $j \gg 0$) satisfy equations
\begin{equation}
\left\{
\begin{aligned}
& \partial_s \tilde{u}_k + \tilde{J}_{k,s,t}(\partial_t \tilde{u}_k - X_{j,t}) = 0, \\
& \tilde{u}_k(s,0) \in L_{j,0}, \;\; \tilde{u}_k(s,1) \in L_{j,1}.
\end{aligned}
\right.
\end{equation}
where the almost complex structures $\tilde{J}_{k,s,t}$ converge to $J_{j,t}$ as $k \rightarrow \infty$. One can apply the same argument as in Lemmas \ref{th:gromov-1} and \ref{th:gromov-2} to conclude that a subsequence converges to some $\tilde{u}_\infty$ whose projection to $W$ satisfies the analogue of \eqref{eq:proj-boundary}, hence leads to a contradiction, exactly as in \eqref{eq:x-contradiction}.
\end{proof}

\subsection{Conclusion}
We now explain how the previous considerations enter into the (otherwise standard) definition of the Fukaya category $\scrA = \scrF(\pi)$. For simplicity, we take coefficients in $\bK = \bZ/2$, and introduce no gradings.

Objects of the category are Lagrangian submanifolds $L \subset E$ as in \eqref{eq:lag}--\eqref{eq:lag-3}. Given two such submanifolds $(L_0,L_1)$, whose behaviour at infinity is governed by $(\lambda_{L_0},\lambda_{L_1})$, we choose once and for all some $A_{L_0,L_1} \in \Omega^1([0,1], \frakg_{\mathit{aff}})$ so that $(A_{L_0,L_1},\lambda_{L_0},\lambda_{L_1}) \in \scrP_{\mathit{aff}}([0,1])$. Additionally, choose $J_{L_0,L_1}$ and $H_{L_0,L_1}$ as in \eqref{eq:floer-1}, \eqref{eq:floer-2}, assumed to be generic so as to satisfy transversality requirements. We can then use those to define the Floer cochain complex $(\mathit{CF}^*(L_0,L_1), \mu^1)$ (really, an ungraded $\bK$-vector space together with a differential). Compactness issues are taken care of by the exactness assumptions, together with Lemmas \ref{th:compactness-1} and \ref{th:compactness-2}.

The next step is to define the product on a triple of objects,
\begin{equation} \label{eq:mu2}
\mu^2: \mathit{CF}^*(L_1,L_2) \otimes \mathit{CF}^*(L_0,L_1) \longrightarrow \mathit{CF}^*(L_0,L_2).
\end{equation}
For that, one takes $S$ to be the disc with $3$ boundary punctures. On the ends, we consider
\begin{equation}
(A_j, \lambda_{j,0}, \lambda_{j,1}) = \begin{cases} 
(A_{L_0,L_2}, \lambda_{L_0},\lambda_{L_2}) & j = 0, \\
(A_{L_{j-1},L_j}, \lambda_{L_{j-1}}, \lambda_{L_j}) & j > 0.
\end{cases}
\end{equation}
Take the function $\lambda_{L_0,L_1,L_2} \in \smooth(\partial S, \bR)$ which, on the boundary component $\partial_j S$, is equal to $\lambda_{L_j}$. By Corollary \ref{th:affine-a-space}, there is an $A_{L_0,L_1,L_2} \in \Omega^1(S, \frakg_{\mathit{aff}})$ such that $(A_{L_0,L_1,L_2}, \lambda_{L_0,L_1,L_2}) \in \scrP(S,\Sigma)$. Having done that, one chooses the remaining data $J_{L_0,L_1,L_2}$ and $K_{L_0,L_1,L_2}$ compatibly. Counting solutions of the associated equation \eqref{eq:main-cr}, \eqref{eq:with-limits} then yields \eqref{eq:mu2}

The construction of the higher $A_\infty$-operations $\mu^d$ is parallel. The only additional proviso is that we have to choose all the relevant structures smoothly depending on the moduli of the discs with $(d+1)$ boundary punctures (which is possible since the spaces of choices are always weakly contractible), and so that as one approaches the boundary of that moduli space, they are compatible with the limit in which the discs split into pieces. The basic compactness results (Lemmas \ref{th:gromov-1} and \ref{th:gromov-2}) similarly need to be extended, to accomodate sequences of solutions $u_k: S_k \rightarrow E$ whose domains approach a limit $S_\infty$, or degenerate via neck-stretching.

\section{The closed-open string map\label{sec:fukaya2}}
We transition the construction of the Fukaya category to the more general framework which uses all hyperbolic isometries, and then explain how that naturally incorporates a construction of the closed-open string map as well.

\subsection{Revisiting the definition of the Fukaya category}
We keep the same class of target spaces as in Section \ref{subsec:space}, except that the base will now be thought of as the disc $B$. The almost complex structures will be as before; for the Hamiltonians, we allow classes $\scrH_\gamma(E)$ of functions which, outside a compact subset of $E \setminus \partial E$, agree with the pullback of $H_\gamma$ for any $\gamma \in \frakg$. Most importantly, for the Lagrangian submanifolds $L \subset E$, we now allow the behaviour of $\pi(L)$ outside a compact subset to be given by an arbitrary geodesic germ $\delta_L$. At the same time, every such $L$ should come with a specified lift of the point $\partial_\infty \delta_L \in \partial_\infty B = \bR/2\pi\bZ$ to $\bR$. We denote this lift by $\tilde{\lambda}_L$. 

Our Riemann surfaces will now come with $(A,\tilde{\lambda})$ as in \eqref{eq:new-a-1}, \eqref{eq:new-a-2}. The choices of almost complex structures $J = (J_z)$ remains the same, but the functions $K(\xi)$ now belong to the more general class associated to $A(\xi) \in \frakg$; and similarly, we have more freedom in choosing the Lagrangians $L_z$, $z \in \partial S$. As far as energy considerations for the solutions of the associated equations \eqref{eq:main-cr} are concerned, the formalism remains as before, except that the analogue of Lemma \ref{th:locally-bounded-2} now appeals to Lemma \ref{th:beta-bounded}. The proof of Lemma \ref{th:gromov-1} goes through as before. In Lemma \ref{th:gromov-2}, given that a subsequence of $u_k$ converges to $u_\infty$, with some point $z_\infty \in u_\infty^{-1}(\partial_\infty E)$, one restricts to a neighbourhood of $z_\infty$ and to $k \gg 0$, and applies Lemma \ref{th:compactness-copy} to $\pi_k(u_k)$ in that neighbourhood. 

\subsection{The interior puncture}
Suppose that we have $A = a_t \mathit{dt} \in \scrP_\tau(S^1)$. Choose a corresponding time-dependent Hamiltonian $H = (H_t)$, $H_t \in \scrH_{a_t}(E)$, and consider the set $\scrC(H)$ of one-periodic orbits of its Hamiltonian vector field $X = (X_t)$:
\begin{equation} \label{eq:closed-orbits}
\left\{ 
\begin{aligned}
& x: \bR/\bZ \longrightarrow E, \\
& dx/dt =X_t.
\end{aligned}
\right.
\end{equation}

\begin{lemma}
For a given $H$, all orbits \eqref{eq:closed-orbits} are contained in a compact subset of $E \setminus \partial E$.
\end{lemma}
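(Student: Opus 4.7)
The plan is to extend the Hamiltonian flow from $E$ to the compactification $\bar E$, classify the one-periodic orbits that can live on $\partial E \cup \partial_\infty E$, and then exclude interior periodic orbits from approaching them, using that the monodromy $g \in G$ associated to $A \in \scrP_\tau(S^1)$ is hyperbolic.

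First, extend $X_t$ to a smooth vector field $\bar X_t$ on $\bar E$. Since $H_t \in \scrH_{a_t}(E)$ agrees with $\pi^* H_{a_t}$ outside some compact $K_0 \subset E \setminus \partial E$, the vector field $X_t$ is the horizontal lift of $X_{a_t}$ both on a neighbourhood of $\partial E$ and on the preimage of a neighbourhood of $\partial_\infty B$. The local product trivialisations \eqref{eq:vertical-collar} and \eqref{eq:horizontal-collar} then show that $X_t$ is tangent to $\partial E$, its extension $\bar X_t$ is tangent to $\partial_\infty E$, and both $\partial E$ and $\partial_\infty E$ are invariant under the flow.

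Next, classify the one-periodic orbits of $\bar X_t$ that touch $\partial E \cup \partial_\infty E$. In either collar, $X_t$ (resp.\ $\bar X_t$) is purely horizontal, so a one-periodic orbit has constant $M$-coordinate and a $B$- (or $\bar B$-) projection that is a closed integral curve of $X_{a_t}$, i.e.\ a fixed point of the monodromy $g$. Because $A \in \scrP_\tau(S^1)$ with $\tau>2$, $g$ is hyperbolic, so its fixed points in $\bar B$ are exactly the two eigenvectors $l_{\mathrm{small}}, l_{\mathrm{big}} \in \partial_\infty B$, none lying in $B$. Consequently $\bar X_t$ has no one-periodic orbits on $\partial E$, while on $\partial_\infty E$ they are exactly the constant orbits in the two fibres $\bar\pi^{-1}(l_{\mathrm{small}})$ and $\bar\pi^{-1}(l_{\mathrm{big}})$.

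To conclude, argue by contradiction. If the lemma failed, there would be one-periodic orbits $x_k$ of $X_t$ and times $t_k$ with $x_k(t_k)$ converging in $\bar E$ to a point of $\partial E \cup \partial_\infty E$. Since $\bar X_t$ is smooth, time-periodic, and defined on the compact manifold $\bar E$, continuous dependence on initial data yields a subsequence for which $x_k(\cdot)$ converges uniformly in $t$ to a one-periodic orbit $x_\infty$ of $\bar X_t$. By invariance $x_\infty$ lies entirely in $\partial E \cup \partial_\infty E$, hence by the previous step it is a constant orbit at some $y_\infty \in \bar\pi^{-1}(l_{\mathrm{small}}) \cup \bar\pi^{-1}(l_{\mathrm{big}})$. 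For $k$ large, $x_k$ is confined to a product neighbourhood of $y_\infty$ disjoint from $K_0$, in which $H_t = \pi^* H_{a_t}$ and $X_t$ is purely horizontal; then $x_k$ has constant $M$-coordinate, and its $B$-projection is a closed integral curve of $X_{a_t}$, hence a fixed point of $g$ in $B$, which is impossible.

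The main obstacle is the final localisation step: one needs to simultaneously pin $x_k$ into a region where the product trivialisation holds and where $H_t$ is a pullback, and this is ensured by the uniform convergence of $x_k(\cdot)$ to the constant orbit $y_\infty$, together with the fact that $K_0$ is bounded away from $\partial_\infty E$ by construction. Everything else is routine from the geometric set-up of Section~\ref{subsec:space}.
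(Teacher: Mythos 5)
Your proposal is correct and is essentially the paper's own argument: extend $X_t$ to $\bar{E}$, take a sequence of escaping one-periodic orbits converging to a limit in $\partial E \cup \partial_\infty E$, and observe that for $k \gg 0$ the orbit $x_k$ lies where $H_t = \pi^*H_{a_t}$, so $\pi(x_k(0))$ would be a fixed point of the hyperbolic holonomy acting on $B$, which acts freely there. The only (harmless) imprecision is calling the limiting orbits on $\partial_\infty E$ ``constant'': the time-dependent flow of $\bar{X}_{a_t}$ need only return $l_{\mathit{small}}, l_{\mathit{big}}$ to themselves at time $1$, so these orbits are loops rather than points, but your final localisation and projection step goes through verbatim with a neighbourhood of the image of $x_\infty$ in place of a neighbourhood of $y_\infty$.
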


\begin{proof}
Our vector field admits a smooth extension $\bar{X}$ to $\bar{E}$, which is everywhere tangent to the boundary. If the Lemma were false, there would have to be a sequence of one-periodic orbits $x_k$ converging to some limit, which takes values in $\partial E$ or $\partial_\infty E$.

Outside a compact subset of $E \setminus \partial E$, the vector fields $X_t$ project to the corresponding infinitesimal hyperbolic isometries $X_{a_t}$. Hence, for $k \gg 0$, $\pi(x_k(0))$ would have to be a fixed point of the holonomy of $A$ acting on $B$. By definition, this holonomy is a hyperbolic element, hence acts freely, which is a contradiction.
\end{proof}

Let's assume from now on that all orbits \eqref{eq:closed-orbits} are nondegenerate, and choose a family $J = (J_t)_{t \in \bR/\bZ}$, $J_t \in \scrJ(E)$, of almost complex structures. One can then consider the Hamiltonian Floer equation with limits $x_\pm$ as in \eqref{eq:closed-orbits},
\begin{equation} \label{eq:h-floer}
\left\{
\begin{aligned}
& u: S = \bR \times S^1 \longrightarrow E, \\
& \partial_s u + J_t(\partial_t u - X_t) = 0, \\
& \textstyle{\lim_{s \rightarrow \pm\infty}} u(s,\cdot) = x_{\pm}.
\end{aligned}
\right.
\end{equation}
Locally on $S$, this belongs to the same class we have studied before, with some simplifications (for instance, 
there is no distinction between geometric and topological energy). The argument from Lemma \ref{th:convex} shows that for any solution, $u^{-1}(\partial E)$ is open and closed, and hence (by looking at the limits) must be empty. For preventing solutions from going to $\partial_\infty E$, Lemma \ref{th:gromov-1} and \ref{th:gromov-2} are again the basic ingredients. More precisely, in the application of Lemma \ref{th:gromov-2}, the limiting maps $v_\infty(s,t) = \pi(u_\infty(s,t)): \bR \times S^1 \longrightarrow \bar{B}$ would satisfy
\begin{equation} \label{eq:v-map-2}
\left\{
\begin{aligned}
& \partial_s v_\infty = 0, \\
& \partial_t v_\infty  = \bar{X}_{a_t},
\end{aligned}
\right.
\end{equation}
compare \eqref{eq:x-contradiction}. Since the holonomy $g$ of $A$ is hyperbolic, there are exactly two solutions of \eqref{eq:v-map-2}, which correspond to fixed points of the action of $g$ on $\partial_\infty B$. However, if a sequence $u_k$ of solutions of \eqref{eq:h-floer} converges to $u_\infty$ on compact subsets, and $T \subset S$ is any finite cylinder, then $\pi(u_k|T)$, $k \gg 0$, is a solution of \eqref{eq:cyl-equation}. Taking $T$ to be sufficiently long yields a contradiction to Lemma \ref{th:length}. Hence, we have now shown the following:

\begin{lemma} \label{th:length-trick}
All solutions of \eqref{eq:h-floer} are contained in a compact subset of $E \setminus \partial E$. \qed
\end{lemma}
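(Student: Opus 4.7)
The plan is to rule out solutions of \eqref{eq:h-floer} reaching either the fibre boundary $\partial E$ or the base-infinity $\partial_\infty E$. For the first, I would argue exactly as in Lemma \ref{th:convex}: near any $z \in u^{-1}(\partial E)$, the product decomposition \eqref{eq:vertical-collar} reduces the equation to a $J_M$-holomorphic map in the $M$-factor (the vector field $X_{a_t}$ is tangent to the base), and \eqref{eq:weakly-convex} forces $u^{-1}(\partial E)$ to be open. Since it is automatically closed and the asymptotic orbits $x_\pm$ lie in a compact subset of $E\setminus\partial E$ by the preceding lemma, this set is empty.

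For base-infinity, suppose for contradiction there is a sequence $u_k$ of solutions and points $z_k = (s_k,t_k) \in \bR \times S^1$ with $u_k(z_k)\to\partial_\infty E$. By $s$-translation invariance, the shifted maps $\tilde u_k(s,t) = u_k(s+s_k,t)$ still satisfy the Floer equation, and the standard action--energy identity makes their geometric energy uniformly bounded by the difference of symplectic actions at $x_\pm$. Lemma \ref{th:gromov-1} then gives local bounds on $\|D\tilde u_k\|_{\bar E}$, and passing to a subsequence one obtains a limit $\tilde u_\infty\colon \bR\times S^1\to\bar E$ on compact subsets, with $\tilde u_\infty(0,t_\infty)\in\partial_\infty E$ for a limit point $t_\infty$ of $(t_k)$. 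Applying Lemma \ref{th:gromov-2} near $(0,t_\infty)$ shows that $\tilde u_\infty$ takes values in $\partial_\infty E$ on a neighborhood, with base projection satisfying \eqref{eq:v-map-2}; the usual open-and-closed argument then globalizes this to $\tilde u_\infty(\bR\times S^1)\subset\partial_\infty E$.

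The contradiction comes from the length trick. Choose any finite cylinder $T = [-N,N]\times S^1 \subset \bR \times S^1$ with $2N > L$, where $L$ is the maximum cylinder length from Lemma \ref{th:length}. Since $\tilde u_\infty(T)\subset\partial_\infty E$ and $\tilde u_k\to\tilde u_\infty$ uniformly on $T$, for $k$ large enough $\tilde u_k(T)$ lies outside the compact subset of $E\setminus\partial E$ on which $H_t$ is allowed to deviate from the pullback of $H_{a_t}$. In that region the Hamiltonian vector field $X_t$ is exactly the horizontal lift of $X_{a_t}$, so $\pi\circ\tilde u_k|T$ is a genuine solution of \eqref{eq:cyl-equation} on a cylinder of length $2N>L$, contradicting Lemma \ref{th:length}. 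The main subtlety is turning the pointwise escape $u_k(z_k)\to\partial_\infty E$ into an exact solution of the base equation on an arbitrarily long cylinder; this is precisely what the combination of the two Gromov-type lemmas and the explicit form of $\scrH_{a_t}(E)$ at infinity are designed to deliver.
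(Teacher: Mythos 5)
Your argument is correct and follows essentially the same route as the paper: rule out $\partial E$ by the convexity argument of Lemma \ref{th:convex}, then rule out escape to $\partial_\infty E$ by translating, extracting a limit via Lemmas \ref{th:gromov-1} and \ref{th:gromov-2} whose projection satisfies \eqref{eq:v-map-2}, and deriving a contradiction because nearby solutions project to solutions of \eqref{eq:cyl-equation} on cylinders longer than the bound of Lemma \ref{th:length}. The only cosmetic omission is that for the projection to solve \eqref{eq:cyl-equation} one also uses that $D\pi$ is $J_t$-holomorphic near infinity (built into $\scrJ(E)$), not just the Hamiltonian condition.
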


Given that, it is straightforward to define the associated Hamiltonian Floer complex $\mathit{CF}^*(E,1)$, whose cohomology we denote by $\mathit{HF}^*(E,1)$ (the number $1$ records the rotation number of the holonomy of the connection on the circle).

Let $S$ be a disc with $(d+1)$ boundary punctures and an interior puncture, equipped with some $(A,\tilde{\lambda}) \in \scrP(S,\Sigma)$. As before, we choose $J = (J_z)$, $K$, and Lagrangian boundary conditions $L_z$. On the cylindrical end, we want the analogue of \eqref{eq:match-1}--\eqref{eq:match-2} to hold, where $J_{d+1}$ and $H_{d+1}$ are such that they can be used to define the Hamiltonian Floer complex. We consider solutions of the associated equation \eqref{eq:main-cr}, \eqref{eq:with-limits}, where the last limit is $x_{d+1} \in \scrC(H_{d+1})$. The required compactness properties follow by combining the previous arguments with that from Lemma \ref{th:length-trick}. 

To define the closed open-string map, one considers boundary conditions which are locally constant along $\partial S$, and uses data on the strip-like ends dictated by the previous definition of the Fukaya category. Moreover, one varies over all Riemann surfaces $S$. The outcome is a collection of maps
\begin{equation}
\mathit{CF}^*(E,1) \otimes \mathit{CF}^*(L_{d-1},L_d) \otimes \cdots \otimes \mathit{CF}^*(L_0,L_1) \longrightarrow \mathit{CF}^{*-d}(L_0,L_d),
\end{equation}
for objects $(L_0,\dots,L_d)$, which together form a chain map from $\mathit{CF}^*(E,1)$ into the standard Hochschild cochain complex $\mathit{CC}^*(\scrA,\scrA)$. This construction is the same as in \cite{seidel02}, \cite[Section 3.8]{fooo}, or \cite[Section 5.4]{ganatra13}, except that on each surface $S$, the choice of angular parametrization of the tubular end $\epsilon_{d+1}$ is tied to constructing the flat connection, in order to utilize the contractibility statement from Corollary \ref{th:last-contractible}. 

\begin{remark}
To expand on the last sentence, note that in standard setups (of the closed-open string map for compact Lagrangian submanifolds, or in the setting of wrapped Fukaya category), rotating the parametrization of the closed string end yields another degree of freedom. Using that degree of freedom yields another map, which turns out to be the composition of the closed-open string map with the BV (loop rotation) operator on Hamiltonian Floer cohomology. In our context, where $\mathit{HF}^*(E,1)$ is defined using a Hamiltonian that is fundamentally (because of the desired rotation number, and not just for technical reasons of transversality) time-dependent, there is no BV operator.
\end{remark}


\end{document}